\documentclass[11pt]{amsart}

\usepackage{amsthm}
\usepackage{amsfonts}
\usepackage{amsmath}
\usepackage{amssymb}
\usepackage{enumitem}
\usepackage{fancyhdr}
\usepackage{tikz}
\usepackage{tikz-cd}
\usepackage{hyperref}

\usetikzlibrary{arrows,decorations.markings}

\setlength{\evensidemargin}{0.25in}
\setlength{\oddsidemargin}{0.25in}
\setlength{\textwidth}{6in}
\parskip0.2em

\usepackage[numbers]{natbib}

\usepackage{mathtools}

\setlist[description]{leftmargin=\parindent,labelindent=\parindent}

\usepackage{graphicx}
\graphicspath{ {images/} }

\newtheorem{theorem}{Theorem}
\newtheorem{lemma}[theorem]{Lemma}

\newtheorem{corollary}[theorem]{Corollary}

\numberwithin{theorem}{section}
\numberwithin{equation}{section}

\theoremstyle{definition}
\newtheorem{definition}[theorem]{Definition}

\newtheorem{remark}[theorem]{Remark}

\newtheorem{assumption}[theorem]{Assumption}

\setcounter{tocdepth}{1}

\title[$\infty$-inner products and OGW invariants]{Infinity inner products and open Gromov--Witten invariants}
\author{Sebastian Haney}
\address{Department of Mathematics, Columbia University, 2990 Broadway, New York, NY 10027}
\email{sebastian@math.columbia.edu}

\DeclareMathOperator\id{id}

\DeclareMathOperator\val{val}

\DeclareMathOperator\hol{hol}

\DeclareMathOperator\pt{pt}

\DeclareMathOperator\red{red}

\DeclareMathOperator\evi{evi}
\DeclareMathOperator\evb{evb}
\DeclareMathOperator\ev{ev}
\DeclareMathOperator\nov{nov}
\DeclareMathOperator\out{out}
\DeclareMathOperator\crit{crit}

\DeclareMathOperator\ind{ind}

\allowdisplaybreaks
\begin{document}
\begin{abstract}
The open Gromov--Witten (OGW) potential is a function from the set of weak bounding cochains on a closed Lagrangian in a closed symplectic manifold to the Novikov ring. Existing definitions of the OGW potential assume that the ground field of the Novikov ring is either $\mathbb{R}$ or $\mathbb{C}$. In this paper, we give an alternate definition of the OGW potential in the pearly model for Lagrangian Floer theory which yields an invariant valued in the Novikov ring over any field of characteristic zero. We work under simplifying regularity hypotheses which are satisfied, for instance, by any monotone Lagrangian. Our OGW potential is defined in terms of an appropriate weakening of a strictly cyclic pairing on a curved $A_{\infty}$-algebra, which can be thought of as a version of a proper Calabi--Yau structure. Such a structure is obtained by constructing a version of the cyclic open-closed map on the pearly Lagrangian Floer cochain complex. We also explain an analogue of our construction in de Rham cohomology, and show that it recovers the OGW potential constructed by Solomon and Tukachinsky.
\end{abstract}
\maketitle
\tableofcontents
\section{Introduction}
The genus zero Gromov--Witten invariants of a closed Lagrangian $L$ in a closed symplectic manifold $M$ should count pseudoholomorphic disks with boundary on $L$ in a way that is independent of the almost complex structure used to write the Cauchy--Riemann equation. In contrast to the theory of closed Gromov--Witten invariants, moduli spaces of pseudoholomorpic disks have boundary, which can potentially obstruct the invariance of open Gromov--Witten invariants. One expects the boundaries of these moduli spaces to contains pseudoholomorphic disks with boundary nodes and pseudoholomorphic spheres intersecting $L$. To account for disk bubbles, Joyce~\cite{Joy07} proposed that $L$ should be equipped with a bounding cochain as defined in~\cite{FOOO}.

For a graded Lagrangian $L$ in a Calabi--Yau threefold $M$, this idea was implemented by Fukaya in~\cite{F11}, where he constructed a generating function for the open Gromov--Witten invariants. Fukaya's open Gromov--Witten potential is a function from the space of bounding cochains on $L$ modulo gauge equivalence to the Novikov ring. The proof that Fukaya's open Gromov--Witten potential is gauge-invariant and invariant under changes of almost complex structure, up to a count of sphere bubbles, uses the existence of a cyclically symmetric pairing on the Fukaya $A_{\infty}$-algebra of $L$, i.e. an inner product $\langle\cdot,\cdot\rangle$ such that
\[ \langle\mathfrak{m}_k(\alpha_1,\ldots,\alpha_k),\alpha_{0}\rangle = \pm\langle\mathfrak{m}_k(\alpha_0,\ldots,\alpha_{k-1}),\alpha_k\rangle\]
up to a sign determined by the gradings of the inputs.

Geometrically, this symmetry should arise by cyclically permuting boundary constraints on pseudoholomorphic disks on $L$ and using the $S^1$-symmetry of the domain. In~\cite{F10}, Fukaya constructs an $A_{\infty}$-structure on the de Rham complex $\Omega^*(L)$, and shows that the integration pairing
\[ \langle\alpha,\beta\rangle = \pm\int_L\alpha\wedge\beta\]
(which is also defined up to a sign depending on the degrees of the inputs) is cyclically symmetric in this sense. Using this cyclic pairing, Fukaya defines the open Gromov--Witten potential of $L$ to be
\begin{align}
\Psi(b)\coloneqq\mathfrak{m}_{-1}+\sum_{k=0}^{\infty}\frac{1}{k+1}\langle\mathfrak{m}_k(a_1,\ldots,a_k),a_{0}\rangle
\end{align}
where $b$ is a bounding cochain on $L$ and $\mathfrak{m}_{-1}$ counts disks without boundary marked points.

The key technical input required in~\cite{F10} is the construction of a system of Kuranishi structures on the moduli spaces pseudoholomorphic disks with boundary on $L$, which are compatible with forgetful maps of marked points, and for which the evaluation map at one of the boundary marked points is a submersion. It is not known how to construct a Kuranishi structure which is compatible with forgetful maps and for which all evaluation maps at the boundary marked points are simultaneously submersions~\cite[Remark 3.2]{F10}. This lack of submersivity means that one cannot construct a cyclic pairing on the $A_{\infty}$-algebras of~\cite{FOOO}, which use smooth singular chains as a model for the cohomology of $L$. Consequently, it is unclear whether or not the Fukaya category generally carries a strictly cyclic structure over fields which do not contain $\mathbb{R}$. This implies that the open Gromov--Witten invariants of~\cite{F11} are only real-valued.

Solomon and Tukachinsky~\cite{ST21} explain how to extend Fukaya's construction of the open Gromov--Witten potential to Lagrangians of any dimension with possibly non-vanishing Maslov class by working over a ground ring with nontrivial grading. This construction recovers the open Gromov--Witten invariants defined by Welschinger~\cite{W13}~\cite{Che22b} and Georgieva~\cite{G16}. Their construction proceeds under the assumption that all relevant moduli spaces of disks are smooth orbifolds with corners, and that one of the boundary evaluation maps is a submersion. In the setting of~\cite{ST21}, one could conceivably impose additional submersivity assumptions to define cyclic $A_{\infty}$-algebra structures in characteristic zero, thus obtaining open Gromov--Witten invariants over any such field. Assuming submersivity of evaluation maps and regularity simultaneously, however, means that the invariants of~\cite{ST21} are only shown to be invariant under changes of almost complex structure in a weak sense. Specifically, their proof of invariance~\cite[Theorem 1]{ST21} requires that one has a path of \textit{regular} almost complex structures, which usually cannot be shown to exist by standard transversality arguments.

The purpose of this paper is to present a construction of the open Gromov--Witten potential over arbitrary fields of characteristic $0$ that does not require cyclic symmetry or any submersivity assumptions. To address the second of these problems, we define the Fukaya $A_{\infty}$-algebra using the Morse complex rather than the de Rham complex or smooth singular chains. Because the $A_{\infty}$-structures on the Morse complex counts configurations of pseudoholomorphic disks joined by Morse flow lines, it manifestly lacks cyclic symmetry.

Instead of using a cyclic pairing for our construction, we use the existence of a Calabi--Yau structure on the Fukaya category. One can define a Calabi--Yau structure on $\mathcal{A}$ to be an $A_{\infty}$-bimodule homomorphism from the diagonal bimdoule $\mathcal{A}_{\Delta}$ to the dual bimodule $\mathcal{A}^{\vee}$. Hence, a cyclic pairing can be thought of as a special case of a Calabi--Yau structure. It is a theorem of Kontsevich and Soibelman~\cite[Theorem 10.7]{KS} that, for an uncurved $A_{\infty}$-algebra, (weak proper) Calabi--Yau structures on $\mathcal{A}$ correspond to strictly cyclic pairings on a minimal model of $\mathcal{A}$.

In light of Kontsevich and Soibelman's theorem, one might hope to mimic the constructions of~\cite{F11} and~\cite{ST21} on a minimal model of the Fukaya $A_{\infty}$-algebra of $L$. The problem with this approach is that, in the filtered case, the potential of a cyclic $A_{\infty}$-algebras does not behave well under quasi-isomorphisms. In particular, it is only a quasi-isomorphism invariant up to additive constants~\cite{C08}. This phenomenon arises as one studies the dependence of the open Gromov--Witten potential on the choice almost complex structure used to define it, wherein such additive constants are given explicitly as counts of pseudoholomorphic teardrops.

Instead of passing to a cyclic minimal model, we construct the open Gromov--Witten potential by incorporating the higher order terms of the $A_{\infty}$-bimodule homomorphism to account for the lack of cyclic symmetry. More specifically, an $A_{\infty}$-bimodule homomorphism $\mathcal{A}_{\Delta}\to\mathcal{A}^{\vee}$ consists of a family of linear maps
\begin{align*}
\phi_{p,q}\colon\mathcal{A}^{\otimes p}\otimes\underline{\mathcal{A}}\otimes\mathcal{A}^{\otimes q}\to\mathcal{A}^{\vee}
\end{align*}
where the underline signifies that the corresponding factor of $\mathcal{A}$ is thought of as a bimodule over $\mathcal{A}$.

The potential $\Phi$ associated to a Lagrangian $L$ equipped with a bounding cochain $b$, defined over a field $\Bbbk$ of characteristic zero, is defined by the formula
\begin{align*}
\Phi(b)\coloneqq\mathfrak{m}_{-1}+\sum_{N=0}^{\infty}\sum_{p+q+k = N}\frac{1}{N+1}\phi_{p,q}(b^{\otimes p}\otimes\underline{\mathfrak{m}_k(b^{\otimes k})}\otimes b^{\otimes q})(b).
\end{align*}
Here, the structure coefficients $\lbrace\mathfrak{m}_k\rbrace_{k=0}^{\infty}$ arise from linear maps on the Morse complex $CM^*(L)$ defined over $\Bbbk$ and $\mathfrak{m}_{-1}$ is a count of rigid pearly trees with no inputs from the Morse cochain complex of $L$. The $A_{\infty}$-bimodule homomorphism appearing in this definition arises from a (possibly bulk-deformed) cyclic open-closed map on the Morse complex of $L$, in the sense of~\cite{Gan23}.

As we will show, this potential satisfies a wall-crossing formula of the same sort as the open Gromov--Witten potential in the cyclic case. 
\begin{theorem}[Paraphrasing of Theorem~\ref{truemain}]
The open Gromov--Witten potential is invariant under changes of almost complex structure up to a count of closed pseudoholomorphic spheres intersecting $L$ in a point.
\end{theorem}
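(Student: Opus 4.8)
The plan is to run the usual cobordism argument for invariance of open Gromov--Witten invariants, but with the cyclic pairing of~\cite{F10,F11} replaced throughout by the $A_\infty$-bimodule homomorphism furnished by the cyclic open-closed map. Fix two almost complex structures $J_0,J_1$ and choose a generic path $\{J_t\}_{t\in[0,1]}$ joining them. Running the pearly moduli constructions that produce $\mathcal{A}$ and $\phi$, but now over the moduli spaces of pearly trees in which the almost complex structure varies along an auxiliary $[0,1]$-parameter, should yield a curved $A_\infty$-algebra $\widetilde{\mathcal{A}}$ over $CM^*(L)\otimes\Bbbk[t,dt]$ together with a weak proper Calabi--Yau structure $\widetilde\phi$ on it, i.e.\ an $A_\infty$-bimodule homomorphism $\widetilde{\mathcal{A}}_\Delta\to\widetilde{\mathcal{A}}^\vee$ (a families version of the cyclic open-closed map of~\cite{Gan23}), restricting at $t=0,1$ to $(\mathcal{A}_{J_0},\phi_{J_0})$ and $(\mathcal{A}_{J_1},\phi_{J_1})$. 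Under the regularity hypotheses in force the parameterized pearly moduli spaces are again transversally cut out with the expected boundary, so this is just the one-parameter version of constructions already carried out; $\widetilde{\mathcal{A}}$ is then a pseudo-isotopy in the sense of~\cite{FOOO} and $\widetilde\phi$ a pseudo-isotopy of Calabi--Yau structures.

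Next I would promote the bounding cochain. Because $\widetilde{\mathcal{A}}$ is a pseudo-isotopy, the weak bounding cochain $b_0$ for $\mathcal{A}_{J_0}$ extends to a path $b_t$ of weak bounding cochains for $\widetilde{\mathcal{A}}$ (a weak Maurer--Cartan element of $\widetilde{\mathcal{A}}$) --- this is the construction underlying the bijection between weak bounding cochains of pseudo-isotopic filtered $A_\infty$-algebras~\cite{FOOO}, solved order by order in the Novikov filtration --- terminating in a weak bounding cochain $b_1$ for $\mathcal{A}_{J_1}$. That $b_1$ and the final formula are, up to the sphere correction, independent of the auxiliary choices (the path $J_t$, the perturbation data, the chosen extension $b_t$) is a two-parameter version of the same argument, which I would only sketch.

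The algebraic heart of the proof is the following identity. Let $\widetilde\Phi(b_t)\in\Lambda\otimes\Bbbk[t,dt]$ be obtained from the defining formula for $\Phi$ by inserting $\widetilde{\mathfrak{m}}_k$, $\widetilde\phi_{p,q}$ and $b_t$ in place of $\mathfrak{m}_k$, $\phi_{p,q}$ and $b$, where $\Lambda$ is the Novikov ring. I claim that the $dt$-component of $\widetilde\Phi(b_t)$ equals $\tfrac{d}{dt}$ of its $\Bbbk[t]$-component plus the $dt$-component of $\widetilde{\mathfrak{m}}_{-1}$. This is the analogue, for the bimodule homomorphism $\widetilde\phi$, of Fukaya's computation in~\cite{F10} that the $t$-derivative of the cyclic potential is a total derivative: one expands the curved $A_\infty$-relations for $\widetilde{\mathcal{A}}$, the bimodule-homomorphism equations for $\widetilde\phi$, and the defining equation of the path $b_t$, and checks that --- once the combinatorial weights $\tfrac{1}{N+1}$ are taken into account --- every contribution except the total derivative and the $\widetilde{\mathfrak{m}}_{-1}$ term cancels in telescoping pairs, with the higher components $\widetilde\phi_{p,q}$ for $p+q>0$ supplying precisely the terms that compensate for the absence of strict cyclicity (when $\widetilde\phi$ is strictly cyclic the identity specializes to the one in~\cite{F10}). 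Integrating the $dt$-component over $[0,1]$ then gives
\[ \Phi_{J_1}(b_1)-\Phi_{J_0}(b_0)\;=\;\int_0^1\bigl(dt\text{-component of }\widetilde{\mathfrak{m}}_{-1}\bigr). \]

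Finally I would identify the right-hand side with the count in the theorem and isolate the main obstacle. The $dt$-component of $\widetilde{\mathfrak{m}}_{-1}$ counts rigid pearly configurations over the $[0,1]$-family carrying a single $dt$-insertion --- a codimension-one interior constraint --- and no Morse inputs; a dimension count shows that the only such configurations are pseudoholomorphic disks with a sphere bubble attached at an interior point lying on $L$, and a gluing/degeneration analysis along the $S^1$-family of such bubbles identifies them, with the correct rational multiplicities, with closed $J_t$-holomorphic spheres meeting $L$ in a single point, which is the asserted wall-crossing term. I expect the principal obstacle to be the algebraic identity of the previous paragraph: it replaces the short cyclic-symmetry manipulation of~\cite{F10} with a bookkeeping of signs and weights over the two-sided bar resolution of $\widetilde{\mathcal{A}}_\Delta$, and arranging all the telescoping cancellations is delicate. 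A secondary difficulty is the degeneration analysis required to match the resulting algebraic constant to the geometric count of closed spheres through a point of $L$.
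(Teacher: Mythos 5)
Your overall architecture (a one--parameter family interpolating the two structures, a weak cyclic symmetry of the $\infty$-inner product replacing strict cyclicity, and an identification of the residual term with a sphere count) matches the paper's, and your remark that the higher components $\widetilde\phi_{p,q}$ compensate for the failure of cyclicity is exactly the role played by Lemma~\ref{weakcyclic}. The paper realizes the interpolation differently --- via the Morse complex of the cylinder $L\times[-1,1]$ and the projection $\pi_*$ to $CM_*([-1,1])$ (Section~\ref{cylinderobjects}), rather than a $\Bbbk[t,dt]$ pseudo-isotopy, which is not readily available in the pearly model --- but that is a difference of model, not of substance.

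The genuine gap is in your "algebraic heart" and in the subsequent dimension count. You claim that, after telescoping, the $dt$-component of the higher-order part of $\widetilde\Phi(b_t)$ is an exact total derivative, so that the entire wall-crossing comes from $\widetilde{\mathfrak{m}}_{-1}$, and you then assert that the only codimension-one degenerations contributing to the $dt$-component of $\widetilde{\mathfrak{m}}_{-1}$ are sphere bubbles meeting $L$. Both claims fail. After applying the weak cyclic symmetry and the (weak) Maurer--Cartan equation $\widetilde{\mathfrak{m}}_0^{\widetilde b}=c\cdot 1$ of Definition~\ref{gauge-equivalence}, the difference of the higher-order parts does \emph{not} vanish: a term of the form $\sum_{p,q}\widetilde{\phi}_{p,q}(\widetilde b^{\otimes p}\otimes\underline{\widetilde{\mathfrak{m}}_0^{\widetilde b}}\otimes\widetilde b^{\otimes q})(\widetilde{\mathfrak{m}}_0^{\widetilde b})$ survives; one needs Lemma~\ref{KSanalogue} (this is where characteristic zero enters) to kill the $p+q>0$ contributions and Lemma~\ref{homvstrace} to identify the remainder with $\mathcal{OC}_0(\widetilde{\mathfrak{m}}_2(\widetilde{\mathfrak{m}}_0,\widetilde{\mathfrak{m}}_0))$, a teardrop-type count of pairs of disks without boundary inputs. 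Correspondingly, the one-dimensional moduli spaces~\eqref{pearlytreescylm-1} behind $\widetilde{\mathfrak{m}}_{-1}$ have, besides the sphere-degeneration strata giving $\widetilde{GW}$, codimension-one strata where the pearly tree breaks into two configurations each contributing to $\widetilde{\mathfrak{m}}_0$; these produce exactly the same term $\mathcal{OC}_0(\widetilde{\mathfrak{m}}_2(\widetilde{\mathfrak{m}}_0,\widetilde{\mathfrak{m}}_0))$, and the wall-crossing formula only closes up because the two occurrences cancel. Without this exchange your identity $\Phi_{J_1}(b_1)-\Phi_{J_0}(b_0)=\int_0^1(dt\text{-component of }\widetilde{\mathfrak{m}}_{-1})$ is off by precisely this teardrop count, which is the additive-constant phenomenon the paper flags in the introduction as the reason one cannot simply pass to a cyclic minimal model.
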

Consequently, the open Gromov--Witten potential is defined over the field of definition of Maurer--Cartan elements of $L$.
\begin{corollary}\label{fieldofdefn}
If $L\subset M$ is a Lagrangian submanifold which is unobstructed by a bounding cochain $b$ defined over a field $\Bbbk$ of characteristic $0$, then its open Gromov--Witten potential is valued in $\Bbbk$.
\end{corollary}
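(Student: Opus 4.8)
The plan is to read the claim off from the explicit formula for $\Phi(b)$ stated in the introduction together with Theorem~\ref{truemain}, so that essentially all of the work has already been done elsewhere in the paper. First I would observe that every ingredient entering the definition of $\Phi$ is $\mathbb{Q}$-linear and combinatorial in nature: the Morse cochain complex $CM^*(L)$ is defined over $\Bbbk$ by hypothesis; the operations $\mathfrak{m}_k$ and the term $\mathfrak{m}_{-1}$ are signed counts of rigid pearly trees, hence have coefficients in $\mathbb{Q}$ (in fact in $\mathbb{Z}$ once orientations and any automorphism weights are fixed); and the components $\phi_{p,q}$ of the $A_\infty$-bimodule homomorphism $\mathcal{A}_\Delta\to\mathcal{A}^\vee$ furnished by the cyclic open--closed map on the pearly complex are counts of the same type. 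The only denominators in the formula are the coefficients $\tfrac{1}{N+1}$, which are admissible precisely because $\operatorname{char}\Bbbk=0$. This is exactly the reason for working in the pearly (Morse) model rather than the de Rham model: under the standing regularity hypotheses the relevant moduli spaces are cut out transversally and carry fundamental chains over $\Bbbk$, so the structure constants are honest point counts and are not forced into a field containing $\mathbb{R}$.

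Next I would address convergence. Since $b$ is a bounding cochain defined over $\Bbbk$, it lies in the appropriate completed space of Morse cochains with coefficients in the Novikov ring over $\Bbbk$, and the gappedness and positivity conditions built into the notion of a (weak) bounding cochain ensure that terms involving $N$ tensor factors of $b$ are supported in energies tending to $+\infty$ as $N\to\infty$ (this is also what lets the curvature term $\mathfrak{m}_0$ fit into the Maurer--Cartan equation). Since $\mathfrak{m}_k$, $\mathfrak{m}_{-1}$ and $\phi_{p,q}$ respect the energy filtration given by symplectic areas of the underlying disk configurations, Gromov compactness then shows that for each $E$ only finitely many terms $\tfrac{1}{N+1}\phi_{p,q}(b^{\otimes p}\otimes\underline{\mathfrak{m}_k(b^{\otimes k})}\otimes b^{\otimes q})(b)$ contribute below energy $E$. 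Hence the double sum converges in the Novikov ring over $\Bbbk$, and $\Phi(b)$ belongs to that ring for the fixed choice of almost complex structure and auxiliary data used to define it.

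Finally I would invoke Theorem~\ref{truemain}: passing to a different regular almost complex structure alters $\Phi(b)$ only by a count of closed pseudoholomorphic spheres meeting $L$ in a single point, and this correction term is again a $\mathbb{Q}$-linear count of rigid configurations, so it too lies in the Novikov ring over $\Bbbk$. Therefore the open Gromov--Witten potential, regarded as the invariant of $(L,b)$ that it is --- well defined modulo such sphere contributions --- takes values in $\Bbbk$, which is the assertion.

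I expect no genuine obstacle in the deduction itself; the substantive content, established earlier in the paper, is that the cyclic open--closed map and all of the pearly $A_\infty$-operations admit constructions with $\mathbb{Q}$-coefficients under the regularity hypotheses --- equivalently, that every moduli space used admits a fundamental chain over $\Bbbk$ and that no step secretly requires a transcendental, de Rham-type input. Once that is in hand, the corollary follows by inspection of the formula for $\Phi$ and one application of Theorem~\ref{truemain}.
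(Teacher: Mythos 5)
Your proposal is correct and matches the paper's (implicit) reasoning: the corollary is stated as an immediate consequence of the construction — all pearly structure constants ($\mathfrak{m}_k$, $\mathfrak{m}_{-1}$, $\phi_{p,q}$) are rational counts weighted by $GL(1,\Bbbk)$-holonomies, the only denominators are $\frac{1}{N+1}$ and $\frac{1}{\ell!}$ which are harmless in characteristic zero, gappedness gives convergence in the Novikov ring over $\Bbbk$, and Theorem~\ref{truemain} controls the dependence on auxiliary choices. Reading the claim off the defining formula is exactly the intended argument.
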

Using the open Gromov--Witten potential, one can extract open Gromov--Witten invariants of certain Lagrangians whose spaces of bounding cochains are sufficiently well-understood. In the situations for which~\cite{ST21} define open Gromov--Witten invariants, one can in fact show that they are unobstruced over $\mathbb{Q}$. By Corollary~\ref{fieldofdefn}, we can guarantee the rationality of these invariants.
\begin{corollary}
If $L\subset M$ is a rational homology sphere (cf.~\cite[Theorem 2]{ST21}) or a real locus satisfying the conditions of~\cite[Theorem 3]{ST21}, then there exist $\mathbb{Q}$-valued open Gromov--Witten invariants for $L$.
\end{corollary}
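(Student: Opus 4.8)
The plan is to reduce the statement to Corollary~\ref{fieldofdefn}. Recall that the open Gromov--Witten invariants of~\cite{ST21} are extracted from the Solomon--Tukachinsky potential by expanding it as a power series in the Novikov parameter (and the boundary-constraint variables) and reading off the coefficients; by the comparison theorem between $\Phi$ and the Solomon--Tukachinsky potential that we establish in the de Rham part of the paper, the very same coefficients are read off from $\Phi(b)$. So the task reduces to exhibiting, in each of the two geometric settings, a bounding cochain $b$ on $L$ (in the weak sense) defined over the Novikov ring with coefficients in $\mathbb{Q}$: granting this, Corollary~\ref{fieldofdefn} yields $\Phi(b)\in\Lambda_{\mathbb{Q}}$, and its coefficients are the desired $\mathbb{Q}$-valued invariants. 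The simplifying regularity hypotheses under which $\Phi$ is defined hold in both settings, since these are precisely the situations treated in~\cite{ST21}.

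First I would treat the rational homology sphere case. Existence of a bounding cochain here is~\cite[Theorem 2]{ST21}; what must be added is that it can be chosen rationally. The Morse-model $A_\infty$-algebra $CM^*(L)$ used in this paper is defined over $\mathbb{Q}$ from the outset, with cohomology $H^*(L;\mathbb{Q})$ supported only in degrees $0$ and $n=\dim L$. For $n\geq 3$ this forces $H^1(L;\mathbb{Q})=H^2(L;\mathbb{Q})=0$, so the curvature is cohomologically trivial and (a gauge representative of) a bounding cochain may be taken to be $b=0$, which is manifestly rational; the remaining low-dimensional cases are handled exactly as in~\cite{ST21}. More generally, the order-by-order solution of the Maurer--Cartan equation along the Novikov filtration in~\cite[Theorem 2]{ST21} only ever involves the structure constants $\mathfrak{m}_k$, which count rigid pearly configurations and are therefore rational, so the whole construction stays inside the rational Novikov ring.

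The real locus case is parallel, with~\cite[Theorem 3]{ST21} replacing~\cite[Theorem 2]{ST21}: the conditions imposed there on $\dim L$, on the Maslov class, and on the spin structure of the real locus $L$ are exactly what makes the relevant disk counts --- and hence the obstruction classes to a bounding cochain --- vanish, so the order-by-order construction again proceeds over the rational Novikov ring, with $b=0$ in the cases of interest. In either case Corollary~\ref{fieldofdefn} then applies verbatim.

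The hard part will be the interface between the two models. Since the Solomon--Tukachinsky bounding cochain a priori lives in the de Rham complex $\Omega^*(L;\mathbb{R})$, one cannot literally restrict it to $\mathbb{Q}$-coefficients; instead one must transport unobstructedness across the quasi-isomorphism relating the de Rham and pearly models, and then verify that the transported obstruction problem admits a solution within the rational subring. It is precisely here that the cohomological smallness of a rational homology sphere, respectively the structural hypotheses on a real locus, are indispensable --- for a general Lagrangian, unobstructedness over $\mathbb{R}$ need not descend to $\mathbb{Q}$. Once a rational bounding cochain has been produced, Corollary~\ref{fieldofdefn} together with the comparison theorem identifies the open Gromov--Witten invariants of~\cite{ST21} with the (rational) coefficients of $\Phi(b)$, which is what we wanted.
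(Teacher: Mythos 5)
Your proposal matches the paper's (very brief) argument: the corollary is stated as an immediate consequence of Corollary~\ref{fieldofdefn} together with the observation that the Lagrangians of~\cite[Theorems 2 and 3]{ST21} are unobstructed over $\mathbb{Q}$, and your reduction plus the order-by-order solution of the Maurer--Cartan equation in the rational Novikov ring is exactly the intended justification of that observation. Two small caveats: the claim that one ``may take $b=0$'' is imprecise, since $\mathfrak{m}_0\neq 0$ in general and the degree argument only shows the obstruction classes vanish so that the inductive construction can proceed (your subsequent order-by-order argument is the correct one); and the ``hard part'' you identify about transporting a de Rham bounding cochain is not actually needed --- the pearly $A_\infty$-algebra $CM^*(L;R)$ is defined over $\mathbb{Q}$ from the outset, so one runs the cohomological vanishing argument of~\cite{ST21} directly in that model rather than descending a real bounding cochain to $\mathbb{Q}$.
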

Computations of relative period integrals carried out by Walcher~\cite{W12} lead to the prediction that there should exist graded Lagrangian submanifolds of the quintic threefold whose open Gromov--Witten invariants are irrational.  Rather than being evidence for the non-existence of a Calabi--Yau structures on the Fukaya category over $\mathbb{Q}$, Corollary~\ref{fieldofdefn} indicates that bounding cochains on these putative Lagrangians can only be constructed after passing to an appropriate field extension of $\mathbb{Q}$.

To keep our exposition simple and self-contained, we have only constructed the open Gromov--Witten potential under regularity assumptions which say that all moduli spaces of pearly trees needed for our construction are cut out transversely. These assumptions are achieved for any Lagrangian considered in~\cite{ST21} and for any monotone Lagrangian~\cite{BC07}, as explained in Appendix~\ref{regappendix}, where a complete list of all assumptions introduced throughout this paper can be found.

Because we have avoided using cyclic symmetry, our construction should also extend to cases where one defines the Fukaya $A_{\infty}$-algebra of $L$ using domain-dependent perturbations of the Cauchy--Riemann equation satisfying certain consistency conditions. In particular, this would allow for the definition of the open Gromov--Witten potential for Lagrangian immersions with clean self-intersections. This is a desirable generalization even if one is only interested in embedded Lagrangians, since considering Lagrangian immersions with disconnected domains enables easy constructions of nullhomologous Lagrangian immersions, for which the terms involving closed curves in the wall-crossing formula can be algebraically canceled by a choice of bounding chain.

\subsection*{Acknowledgments} I would like to thank Mohammed Abouzaid for helpful discussions about this project, and Jake Solomon for pointing out the relevance of~\cite{W12} to the existence of cyclic structures to me. I am also grateful to Andrei C\u{a}ld\u{a}raru for an interesting discussion about this work. This project was partially supported by the NSF through grant DMS-2103805.

\section{Hochschild invariants}\label{hochinvts}
In this section, we will collect some definitions pertaining to $A_{\infty}$-algebras, mainly for the purposes of fixing our notation and conventions, and review the definition of Hochschild and cyclic homology in the curved case. These invariants behave somewhat differently than for uncurved $A_{\infty}$-algebras, since the bar complex is no longer acyclic. Our primary use for this theory is to eventually extract a particular $A_{\infty}$-bimodule homomorphism $\mathcal{A}_{\Delta}\to\mathcal{A}^{\vee}$ from the diagonal bimodule over an $A_{\infty}$-algebra $\mathcal{A}$ to its dual over the ground field, so this does not pose a serious problem for us.
\subsection{\texorpdfstring{$A_{\infty}$}{A-infinity}-algebras}
All $A_{\infty}$-algebras we consider will be thought of as modules of a certain extension of the Novikov ring.
\begin{definition}
Let $\Bbbk$ be a field of characteristic $0$. For the formal variables $T$ of degree $0$ and $e$ of degree $2$, we denote by $\Lambda_{\nov}$ the universal Novikov ring
\begin{align}
\Lambda_{\nov}\coloneqq\left\lbrace\sum_{i=0}^{\infty}a_i T^{\lambda_i}e^{\mu_i}\colon a_i\in\Bbbk \, , \; \lambda_i\in\mathbb{R}_{\geq0} \, , \; \lim_{i\to\infty}\lambda_i = 0\right\rbrace.
\end{align}
\end{definition}
When we define Lagrangian Floer cohomology, it will actually be a module over a certain $\mathbb{Z}$-graded $\Lambda_{\nov}$-algebra, following~\cite{ST21}.
\begin{definition}
Let $s,t_0,\ldots,t_N$ be formal variables with integer gradings $|s|$ and $|t_i|$. Define the following $\mathbb{Z}$-graded and graded-commutative rings
\begin{align}
R &\coloneqq\Lambda_{\nov}[[s,t_0,\ldots,t_N]] \\
Q &\coloneqq\Lambda_{\nov}[[t_0,\ldots,t_N]].
\end{align}
The ring $R$ will be the coefficient ring for all $A_{\infty}$-algebras we consider. The ring $Q$ will be the ring of coefficients for bulk deformation classes, and thus does not appear explicitly in this section. There is a valuation on $R$ defined by
\begin{align*}
\nu\colon R &\to\mathbb{R} \\
\nu\left(\sum_{j=0}^{\infty}a_j T^{\lambda_j}e^{\mu_j}s^k\prod_{i=0}^N t_i^{\ell_{ij}}\right) &= \min_{\lbrace j\colon a_j\neq0\rbrace}\left(\lambda_j+k+\sum_{i=0}^N\ell_{ij}\right).
\end{align*}
\end{definition}
The extra variables in $R$ and $Q$ enable us to, for example, define the notion of a point-like bounding cochain.

Let $\mathcal{A}$ be a free graded $R$-module. It follows that there is a free graded $\Bbbk[[s,t_0,\ldots,t_N]]$-module $\overline{\mathcal{A}}$ such that
\[\mathcal{A} = \overline{\mathcal{A}}\otimes_{\Bbbk[[s,t_0,\ldots,t_N]]}R.\]
We denote by $|x|$ the grading of an element $x\in A$, and set $|x|' = |x|-1$. Note that the grading $|x|$ also incorporates the grading of coefficients in $R$.

In Lagrangian Floer theory, Gromov compactness implies that the $A_{\infty}$-algebras we will construct are gapped and filtered. To explain what this means, let $G\subset\mathbb{R}_{\geq0}\times 2\mathbb{Z}$ be a monoid such that
\begin{itemize}
\item[•] the image of $G$ in $\mathbb{R}_{\geq0}$ is discrete;
\item[•] $G\cap(\lbrace 0\rbrace\times 2\mathbb{Z}) = \lbrace(0,0)\rbrace\,$;
\item[•] for any $\lambda\in\mathbb{R}_{\geq0}$, the set $G\cap(\lbrace\lambda\rbrace\times 2\mathbb{Z})$ is finite.
\end{itemize}
For each $\beta = (\lambda(\beta),\mu(\beta))\in G$, suppose that we have a collection of linear maps
\begin{align}
\mathfrak{m}_{k,\beta}\colon(\overline{\mathcal{A}}[1])^{\otimes k}\to\overline{\mathcal{A}}[1]
\end{align}
for which $\mathfrak{m}_{0,(0,0)} = 0$. These induce $\Lambda_{\nov}$-linear maps
\begin{align}
&\mathfrak{m}_k\colon(\mathcal{A}[1])^{\otimes k}\to\mathcal{A}[1] \\
&\mathfrak{m}_k\coloneqq\sum T^{\lambda(\beta)}e^{\mu(\beta)/2}\mathfrak{m}_{k,\beta}.
\end{align}
The operations $\mathfrak{m}_k$ induce coderivations $\widehat{\mathfrak{m}}_k$ given by
\begin{align*}
\widehat{\mathfrak{m}}_k(x_1\otimes\cdots\otimes x_n) = \sum_{i=1}^{n-k}(-1)^{\maltese_i}x_n\otimes\cdots\otimes\mathfrak{m}_k(x_{i+1}\otimes\cdots\otimes x_{i+k})\otimes\cdots\otimes x_n
\end{align*}
for all $n\geq k$, where
\begin{equation}
\maltese_i\coloneqq\sum_{j=1}^i|x_j|' \label{cross}
\end{equation}
and setting $\widehat{\mathfrak{m}}_k(x_1\otimes\cdots\otimes x_n) = 0$ for $n<k$.
\begin{definition}
We say that $(\mathcal{A},\lbrace\mathfrak{m}_{k,\beta}\rbrace_{k = 0}^{\infty})$ form a \textit{gapped filtered $A_{\infty}$-algebra} if the coderivation
\[ \widehat{d} = \sum_{k=1}^{\infty}\widehat{\mathfrak{m}}_k \]
satisfies
\[ \widehat{d}\circ\widehat{d} = 0.\]
Equivalently, the operations $\mathfrak{m}_k$ are required to satisfy the curved $A_{\infty}$-relations
\begin{equation}
\sum_{i,\ell}(-1)^{\maltese_i}\mathfrak{m}_{k-\ell+1}(x_1\otimes\cdots\otimes\mathfrak{m}_{\ell}(x_{i+1}\otimes\cdots\otimes x_{i+\ell})\otimes\cdots\otimes x_k) = 0.\label{ainftyrels}
\end{equation}
\end{definition}
\begin{remark}[Sign conventions]
The sign~\eqref{cross} can be thought of as a Koszul sign arising when $\mathcal{A}$ acts on itself on the right. These are the sign conventions used b~\cite{C08},~\cite{C12},~\cite{CL11},~\cite{CL}, and~\cite{FOOO}. One can translate these signs to those of~\cite{Gan23} by replacing $\mathcal{A}$ with the opposite $A_{\infty}$-algebra.
\end{remark}
Additionally, $\mathcal{A}$ is said to be \textit{strictly unital} if there is an element $1\in\mathcal{A}$ with $|1| = 0$ such that
\begin{itemize}
\item $\mathfrak{m}_2(1,x) = x = (-1)^{|x|}\mathfrak{m}_2(x,1)$ and
\item $\mathfrak{m}_k(x_k,\ldots,1,\ldots,x_1) = 0$ whenever $k\neq 2$.
\end{itemize}
The Fukaya $A_{\infty}$-algebras we construct will possess strict units, but all of our arguments can be reworked in the homotopy unital setting~\cite[\S 3.3.1]{FOOO}. To compensate for the nonvanishing of $\mathfrak{m}_0$, we use weak bounding cochains, defined using the strict unit.
\begin{definition}
Let $\mathcal{A}$ be a strictly unital gapped filtered $A_{\infty}$-algebra and denote by $1$ its strict unit. For any $b\in\mathcal{A}$ with $|b| = 1$ and $\val(b)>0$, we say that it is a \textit{weak bounding cochain} if it satisfies the weak Maurer--Cartan equation
\[ \mathfrak{m}_0^b\coloneqq\sum_{k=0}^{\infty}\mathfrak{m}_k(b^{\otimes k}) = c\cdot 1. \]
Let $\widehat{\mathcal{M}}_{weak}(\mathcal{A})$ denote the set of bounding cochains on $\mathcal{A}$. If $b$ satisfies the Maurer--Cartan equation
\begin{equation}
\mathfrak{m}_0^b = 0
\end{equation}
it is said to be a \textit{bounding cochain}. Let $\widehat{\mathcal{M}}(\mathcal{A})$ denote the set of all bounding cochains on $\mathcal{A}$.
\end{definition}

We end this subsection by reviewing some basic notions related to filtered $A_{\infty}$-bimodules. Let $\mathcal{B}$ be a graded free filtered $R$-module, and fix a gapped filtered $A_{\infty}$-algebra $\mathcal{A}$ as above. An $A_{\infty}$-bimodule structure on $\mathcal{B}$ consists of a family of operations
\[ \mathfrak{n}_{p,q}\colon B_p(\mathcal{A}[1])\otimes\mathcal{B}[1]\otimes B_q(\mathcal{A}[1])\to\mathcal{B}[1]. \]
These maps induce
\[ \widehat{\delta}\colon\widehat{B}(\mathcal{A}[1])\widehat{\otimes}\mathcal{B}[1]\widehat{\otimes}\widehat{B}(\mathcal{A}[1])\to\widehat{B}(\mathcal{A}[1])\otimes\mathcal{B}[1]\widehat{\otimes}\widehat{B}(\mathcal{A}[1]) \]
defined by
\begin{align*}
&\widehat{\delta}(x_1\otimes\cdots\otimes x_k\otimes y\otimes z_1\otimes\cdots\otimes z_{\ell}) = \widehat{d}(x_1\otimes\cdots\otimes x_k)\otimes y\otimes z_1\otimes\cdots\otimes z_{\ell} \\
&+\sum(-1)^{\sum_{i=1}^{k-p}|x_i|'}x_1\otimes\cdots\otimes x_{k-p}\otimes\mathfrak{n}_{p,q}(x_{k-p+1}\otimes\cdots\otimes y\otimes\cdots\otimes z_q)\otimes\cdots\otimes z_{\ell} \\
\end{align*}
The maps $\lbrace\mathfrak{n}_{p,q}\rbrace_{p,q\geq0}$ give $\mathcal{B}$ the structure of an $A_{\infty}$-bimodule if
\[ \widehat{\delta}\circ\widehat{\delta} = 0. \]
The notion of an $A_{\infty}$-bimodule homomorphism (over the pair of $A_{\infty}$-algebra homomorphisms $(\id,\id)$ on $\mathcal{A}$) consists of a family of linear maps
\[ \phi_{p,q}\colon B_p(\mathcal{A}[1])\widehat{\otimes}\mathcal{B}[1]\widehat{\otimes} B_q(\mathcal{A}[1])\to\mathcal{B}'[1] \]
which respect the filtration on $\mathcal{B}$. We form 
\[\widehat{\phi}\colon B(\mathcal{A}[1])\widehat{\otimes}\mathcal{B}[1]\widehat{\otimes} B\mathcal{A}\to B(\mathcal{A}[1])\widehat{\otimes}\mathcal{B}'[1]\widehat{\otimes}B(\mathcal{A}[1])\]
by setting
\begin{align*}
&\widehat{\phi}(x_1\otimes\cdots\otimes x_k\otimes y\otimes z_1\otimes z_{\ell}) = \\
&\sum x_1\otimes\cdots\otimes x_{k-p}\otimes\phi_{p,q}(x_{k-p+1}\otimes\cdots\otimes y\otimes\cdots\otimes z_q)\otimes z_{q+1}\otimes\cdots\otimes z_{\ell}.
\end{align*}
The defining condition for an $A_{\infty}$-bimodule homomorphism is
\begin{equation}
\widehat{\phi}\circ\widehat{\delta} = \widehat{\delta}'\circ\widehat{\phi}
\end{equation}
where $\delta'$ is induced from the $A_{\infty}$-bimodule structure maps on $\mathcal{B}'$.

The two main examples of $A_{\infty}$-bimodule we will use are the diagonal bimodule $\mathcal{A}_{\Delta}$, and the dual bimodule $\mathcal{A}^{\vee}$. To define the latter, let $\mathcal{A}^{\vee}$ denote the $R$-dual of $\mathcal{A}$, and equip it with structure maps $\lbrace\mathfrak{m}^{\vee}_{k,\ell}\rbrace_{k,\ell\geq0}$ given by
\begin{align*}
&\mathfrak{m}^{\vee}_{k,\ell}(x_1\otimes\cdots\otimes x_k\otimes v^{\vee}\otimes x_{k+1}\otimes\cdots\otimes x_{\ell})(w) = \\
&(-1)^{\epsilon}v^{\vee}(\mathfrak{m}_{k+\ell+1}(x_{k+1}\otimes\cdots\otimes x_{k+\ell}\otimes w\otimes x_1\otimes\cdots\otimes x_k))
\end{align*}
where the sign is determined by
\[ \epsilon = |v^*|'+\left(\sum_{i=1}^k |x_i|'\right)\left(|v^*|'+\sum_{i=k+1}^{k+\ell}|x_i|'+|w|'\right). \]
\subsection{The Hochschild complex} Let $\mathcal{A}$ be a strictly unital gapped filtered $A_{\infty}$-algebra, and consider an $A_{\infty}$-bimodule $\mathcal{B}$ over $\mathcal{A}$. Denote by
\begin{align*}
CH_*^k(\mathcal{A},\mathcal{B})\coloneqq\underline{\mathcal{B}[1]}\otimes(\mathcal{A}[1])^{\otimes k}
\end{align*}
the space of Hochschild chains of length $k$, with degree given by
\[ |\underline{b}\otimes a_1\otimes\cdots\otimes a_k| = |b|+\sum_{i=1}^k |a_i|'.\]
We have underlined the bimodule factor for readability. Since $\mathcal{A}$ can be curved, it is usually more convenient to consider reduced Hochschild chains, which are given by
\begin{align*}
CH_*^{\red, k}(\mathcal{A},\mathcal{B})\coloneqq\underline{\mathcal{B}[1]}\otimes(\mathcal{A}[1]/R\cdot 1)^{\otimes k}.
\end{align*}
The Hochschild chain complex is the completed direct sum
\begin{align*}
CH_*(\mathcal{A},\mathcal{B})\coloneqq\widehat{\bigoplus}_{k\geq0} CH_*^k(\mathcal{A},\mathcal{B}).
\end{align*}
Similarly, the reduced Hochschild chain space is the completed direct sum
\begin{align*}
CH_*^{\red}(\mathcal{A},\mathcal{B})\coloneqq\widehat{\bigoplus}_{k\geq0} CH_*^{\red,k}(\mathcal{A},\mathcal{B}).
\end{align*}
For $v\in\mathcal{B}$ and $a_i\in\mathcal{A}$, the Hochschild differential is defined by
\begin{align*}
&b(\underline{v}\otimes a_{1}\otimes\cdots\otimes a_k)\coloneqq\\
&\sum(-1)^{\#_{j}^i}\underline{\mathfrak{m}_{i+j+1}(a_{k-i+1}\otimes\cdots\otimes a_k\otimes\underline{v}\otimes a_{1}\otimes\cdots\otimes a_{j})}\otimes a_{j+1}\otimes\cdots\otimes a_{k-i} \\
&+\sum(-1)^{\maltese_i'}\underline{v}\otimes a_1\otimes\cdots\otimes a_{i-1}\otimes\mathfrak{m}_j(a_i\otimes\cdots\otimes a_{i+j-1})\otimes\cdots\otimes a_k.
\end{align*}
The signs above are determined by
\begin{align*}
\#_j^i &\coloneqq\left(\sum_{s=1}^i|a_{k-i+s}|'\right)\left(|v|'+\sum_{t=1}^j|a_t|'\right) \\
\maltese_i' &\coloneqq|v|'+\sum_{s=1}^{i-1}|a_s|'.
\end{align*}
Notice that $\mathfrak{m}_0$ can appear in terms of the second sum in the definition of the Hochschild differential, but we still have that $b\circ b = 0$. The main examples we will consider are the cases where $\mathcal{B} = \mathcal{A}_{\Delta}$ is the diagonal bimodule, or where $\mathcal{B} = \mathcal{A}^{\vee}$ is the $\Lambda_{\nov}$-dual bimodule.

We use the strict unit on $\mathcal{A}$ to construct a homological $S^1$-action on
\[CH_*(\mathcal{A})\coloneqq CH_*(\mathcal{A},\mathcal{A}_{\Delta})\]
in the following sense. Let $t\in\mathbb{Z}/k\mathbb{Z}$ denote the generator, and define its action on $\mathcal{A}^{\otimes k}$ by
\begin{align*}
t(a_1\otimes\cdots\otimes a_k) = (-1)^{\maltese_{k-1}\cdot|a_k|'}a_k\otimes a_1\otimes\cdots\otimes a_{k-1}.
\end{align*}
Define an operator $N = 1+t+\cdots+t^{k-1}$ on
\[CH_*^k(\mathcal{A},\mathcal{A}_{\Delta}).\]
We abuse notation and write $t$ for the generator of any cyclic group, and $N$ for the operator $CH_*(\mathcal{A})$ obtained by considering the action of $N$ as above on each direct summand. Let $b' = \widehat{d}$ denote the bar differential. The maps defined so far satisfy the relations
\begin{align*}
b(1-t) &= (1-t)b' \\
b'N &= Nb
\end{align*}
meaning that we can form the following bicomplex.
\[\begin{tikzcd}
{} & {} & {} & {} & {} \\
{} & CH_1(\mathcal{A})\arrow{u}\arrow{l} & CH_1(\mathcal{A})\arrow{u}\arrow{l}{1-t} & CH_1(\mathcal{A})\arrow{u}\arrow{l}{N} & CH_1(\mathcal{A})\arrow{u}\arrow{l}{1-t} & {}\arrow{l} \\
{} & CH_0(\mathcal{A})\arrow{u}{b}\arrow{l} & CH_0(\mathcal{A})\arrow{u}{-b'}\arrow{l}{1-t} & CH_0(\mathcal{A}) \arrow{u}{b}\arrow{l}{N} & CH_0(\mathcal{A})\arrow{u}{-b'}\arrow{l}{1-t} & {}\arrow{l} \\
{} & CH_{-1}(\mathcal{A})\arrow{u}{b}\arrow{l} & CH_{-1}(\mathcal{A})\arrow{u}{-b'}\arrow{l}{1-t} & CH_{-1}(\mathcal{A})\arrow{u}{b}\arrow{l}{N} & CH_{-1}(\mathcal{A})\arrow{u}{-b'}\arrow{l}{1-t} & {}\arrow{l} \\
{} & {}\arrow{u} & {}\arrow{u} & {}\arrow{u} & {}\arrow{u}
\end{tikzcd}\]
All of these maps descend to $CH_*^{\red}(\mathcal{A})$, so we can define a reduced bicomplex analogously. To define an analogue of Connes' operator when $\mathfrak{m}_0$ is nonzero, we need to specify a slightly different contracting homotopy for the bar complex than is typically used in the uncurved case.
\begin{lemma}~\cite[Lemma 5.4]{C12}
There is a contracting homotopy $\tilde{s}$ for the complex 
\[(CH_*(\mathcal{A}),\widehat{d})\]
which is defined by decomposing
\begin{align*}
CH_*(\mathcal{A}) = \ker(\widehat{d})\oplus V
\end{align*}
for some $\Lambda_{\nov}$-module $V$, and setting
\begin{align*}
\widetilde{s}(\alpha) =
\begin{cases}
1\otimes\alpha \, , & \alpha\in V \\
1\otimes\alpha - \mathfrak{m}_0\otimes 1\otimes\alpha \, , & \alpha\in\ker\widehat{d}.
\end{cases}
\end{align*}
\qed
\end{lemma}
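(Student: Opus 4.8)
This is a direct verification of the homotopy identity $\widehat{d}\circ\widetilde{s}+\widetilde{s}\circ\widehat{d}=\mathrm{id}$ on $CH_*(\mathcal{A})$, so the plan is computational. The starting point is the unreduced candidate $s(\alpha)=1\otimes\alpha$, which prepends the strict unit as a new bimodule slot; this is the usual contracting homotopy for the bar differential $b'=\widehat{d}$ in the uncurved case. I would first recompute $\widehat{d}s+s\widehat{d}$ keeping track of the curvature. By strict unitality, every term in which the prepended unit enters the input of an operation $\mathfrak{m}_k$ vanishes unless $k=2$, where $\mathfrak{m}_2(1,-)=\mathrm{id}$ contributes $\mathrm{id}$, while the terms in which the prepended unit is a spectator reassemble, up to the Koszul sign $(-1)^{|1|'}$, into the corresponding terms of $s\widehat{d}$ and cancel. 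The single place this accounting breaks down is the insertion of $\mathfrak{m}_0$ immediately next to the prepended unit, which no unit relation removes; I would isolate this leftover into an explicit operator $E$, so that
\[
\widehat{d}s+s\widehat{d}=\mathrm{id}+E.
\]

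Next I would establish the two properties of $E$ needed. First, $E$ strictly raises the valuation $\nu$: the gapped hypothesis $\mathfrak{m}_{0,(0,0)}=0$ forces $\nu(\mathfrak{m}_0)>0$, so $E$ is topologically nilpotent and $\mathrm{id}+E$ is invertible, with inverse $\sum_{n\geq 0}(-E)^n$ converging $\nu$-adically. Second, $E$ is a chain endomorphism, $E\widehat{d}=\widehat{d}E$; this is the step where the curved $A_{\infty}$-relation $\mathfrak{m}_1(\mathfrak{m}_0)=0$ enters, alongside strict unitality and the sign cancellations among $\mathfrak{m}_0$-insertions that already underlie $\widehat{d}\circ\widehat{d}=0$. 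Granting these, $s\circ(\mathrm{id}+E)^{-1}$ is a contracting homotopy, since
\[
\widehat{d}\bigl(s(\mathrm{id}+E)^{-1}\bigr)+\bigl(s(\mathrm{id}+E)^{-1}\bigr)\widehat{d}=(\widehat{d}s+s\widehat{d})(\mathrm{id}+E)^{-1}=\mathrm{id}.
\]
The particular $\widetilde{s}$ in the statement is then obtained (or verified directly by the same bookkeeping) by fixing a complement $V$ of $\ker\widehat{d}$ and checking that the homotopy one needs is $s$ itself on $V$ and its single first-order $\mathfrak{m}_0$-modification $\alpha\mapsto 1\otimes\alpha-\mathfrak{m}_0\otimes 1\otimes\alpha$ on $\ker\widehat{d}$; the verification of $\widehat{d}\widetilde{s}+\widetilde{s}\widehat{d}=\mathrm{id}$ is then done summand by summand on $\ker\widehat{d}\oplus V$, using that $\widehat{d}$ maps into $\ker\widehat{d}$ and that $\widehat{d}(\mathfrak{m}_0\otimes 1\otimes\alpha)$ cancels $E(\alpha)$ on cocycles $\alpha$.

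The main obstacle is the sign and combinatorial bookkeeping underlying the first two paragraphs: organizing all $\mathfrak{m}_0$-insertions against the diagonal-bimodule structure on $CH_*(\mathcal{A})$, and confirming both that $\widehat{d}s+s\widehat{d}$ overshoots the identity by exactly $E$ and that $E$ commutes with $\widehat{d}$. A secondary point, not a real difficulty, is that $\widetilde{s}$ depends on the non-canonical splitting $\ker\widehat{d}\oplus V$ and is therefore not $\Lambda_{\nov}$-linear; this is harmless for its only use here---defining the curved replacement for Connes' operator on the bicomplex above---and the same argument applies verbatim to $CH_*^{\red}(\mathcal{A})$.
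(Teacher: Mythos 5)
The paper does not actually prove this lemma: it is quoted from~\cite[Lemma 5.4]{C12} and closed with a \qed, so your proposal is supplying an argument the paper omits. Your overall strategy is the natural one and surely the one underlying Cho's proof: start from the classical extra degeneracy $s(\alpha)=1\otimes\alpha$, use strict unitality to see that $\widehat{d}s+s\widehat{d}=\mathrm{id}+E$ with $E$ the error coming from the $\mathfrak{m}_0$-insertion adjacent to the prepended unit, and use gappedness ($\nu(\mathfrak{m}_0)>0$) to control the correction. One of your intermediate steps is easier than you suggest: $E\widehat{d}=\widehat{d}E$ is formal, obtained by applying $\widehat{d}$ to either side of $\widehat{d}s+s\widehat{d}=\mathrm{id}+E$ and using $\widehat{d}^2=0$; no separate appeal to $\mathfrak{m}_1(\mathfrak{m}_0)=0$ is needed there.

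The gap is in the passage to the stated formula. First, $s(\mathrm{id}+E)^{-1}=\sum_n(-1)^n sE^n$ is an infinite series whose $n$th correction has the form $1\otimes\mathfrak{m}_0\otimes1\otimes\cdots$, so it is not the $\widetilde{s}$ of the statement; the two-term formula must be verified directly, as you concede. Second, that direct verification rests entirely on your parenthetical claim that ``$\widehat{d}(\mathfrak{m}_0\otimes1\otimes\alpha)$ cancels $E(\alpha)$ on cocycles $\alpha$,'' and this is precisely the delicate point. Under the paper's stated conventions, the two terms in which an operation consumes the inserted unit, namely $\mathfrak{m}_2(\mathfrak{m}_0,1)\otimes\alpha$ and $(-1)^{|\mathfrak{m}_0|'}\mathfrak{m}_0\otimes\mathfrak{m}_2(1,a_1)\otimes\cdots$, cancel \emph{each other} (unit axiom plus Koszul sign), as do the two $\mathfrak{m}_0$-insertions adjacent to that unit, leaving $\widehat{d}(\mathfrak{m}_0\otimes1\otimes\alpha)=\mathfrak{m}_0\otimes1\otimes\widehat{d}\alpha$, which vanishes on cocycles rather than producing $-E(\alpha)$; the case-by-case check would then return $\widehat{d}\widetilde{s}(\alpha)=\alpha+\mathfrak{m}_0\otimes1\otimes\alpha$ on $\ker\widehat{d}$. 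So you must either carry out the bookkeeping in Cho's precise sign conventions and exhibit the cancellation explicitly, or retreat to the statement actually used downstream, namely that on the reduced complex the offending term $\mathfrak{m}_0\otimes1\otimes\alpha$ dies (it contains the unit in a non-bimodule slot) and the plain $s$ already works. As written, the proposal asserts the one computation that constitutes the content of the lemma instead of performing it.
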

The Connes $B$ operator is then
\begin{align*}
B = (1-t)\widetilde{s}N
\end{align*}
and we can form the $(b,B)$-bicomplex.
\[\begin{tikzcd}
{} & {} & {} & {} & {}\\
{} & CH_1(\mathcal{A})\arrow{u}\arrow{l} & CH_2(\mathcal{A})\arrow{u}\arrow{l}{B} & CH_3(\mathcal{A})\arrow{u}\arrow{l}{B} & {}\arrow{l} \\
{} & CH_0(\mathcal{A})\arrow{u}{b}\arrow{l} & CH_1(\mathcal{A})\arrow{u}{b}\arrow{l}{B} & CH_2(\mathcal{A})\arrow{u}{b}\arrow{l}{B} & {}\arrow{l}\\
{} & CH_{-1}(\mathcal{A})\arrow{u}{b}\arrow{l} & CH_0(\mathcal{A})\arrow{u}{b}\arrow{l}{B} & CH_1(\mathcal{A})\arrow{u}{b}\arrow{l}{B} & {}\arrow{l}\\
{} & {}\arrow{u} & {}\arrow{u} & {}\arrow{u} & {}
\end{tikzcd}\]
On $CH_*^{\red}$, the terms of $\widetilde{s}$ of the form $\mathfrak{m}_0\otimes 1\otimes\alpha$ vanish, and thus $B$ descends to the usual Connes operator, i.e.
\[ B(a_1\otimes\cdots\otimes a_k) = \sum 1\otimes a_i\otimes\cdots\otimes a_k\otimes a_1\cdots\otimes a_{i-1} \]
on the reduced Hochschild chain space.
\begin{remark}
To account for possibly non-unital $A_{\infty}$-algebras, one could instead work with the non-unital Hochschild complex, which also carries a homological $S^1$-action~\cite[\S 3.2]{Gan23}. Since the Floer cochain complexes we construct are strictly unital, this will not be necessary in this paper.
\end{remark}
If $u$ is a formal variable of degree $2$, we define $b_{eq} = b+uB$, and define the positive, negative, and periodic cyclic chain complexes
\begin{align*}
CC_*^+(\mathcal{A}) &\coloneqq (CH_*(\mathcal{A})\otimes_{R}R((u))/uR[[u]],b_{eq}) \\
CC_*^{-}(\mathcal{A}) &\coloneqq (CH_*(\mathcal{A})\widehat{\otimes}_{R}R[[u]],b_{eq}) \\
CC_*^{\infty}(\mathcal{A}) &\coloneqq (CH_*(\mathcal{A})\widehat{\otimes}_{R}R((u)),b_{eq})
\end{align*}
respectively. The positive and negative cyclic chain complexes are obtained by restricting to the positive and negative columns of the $(b,B)$-complex, respectively, and the periodic cyclic complex is obtained from the full bicomplex. Denote by $HC_*^{+/-/\infty}(\mathcal{A})$ the homology of these complexes, called the positive, negative, or periodic cyclic homology. Similarly, one defines the reduced cyclic complexes $CC_{*}^{\circ,\red}(\mathcal{A})$ and the reduced cyclic cohomologies $HC_{*}^{\circ,\red}(\mathcal{A})$, where $\circ\in\lbrace +,-,\infty\rbrace$.

Dualizing the reduced $(b,B)$-complex, gives us the reduced $(b^*,B^*)$-complex
\begin{equation}\label{b*B*complex}
\begin{tikzcd}
{} & {} & {} & {} & {}\\
{}\arrow{r} & CH^2_{\red}(\mathcal{A},\mathcal{A}^{\vee})\arrow{r}{B^*}\arrow{u} & CH^1_{\red}(\mathcal{A},\mathcal{A}^{\vee})\arrow{r}{B^*}\arrow{u} & CH^0_{\red}(\mathcal{A},\mathcal{A}^{\vee}) \arrow{r}\arrow{u}& {} \\
{}\arrow{r} & CH^1_{\red}(\mathcal{A},\mathcal{A}^{\vee})\arrow{r}{B^*}\arrow{u}{b^*} & CH^0_{\red}(\mathcal{A},\mathcal{A}^{\vee})\arrow{r}{B^*}\arrow{u}{b^*} & CH^{-1}_{\red}(\mathcal{A},\mathcal{A}^{\vee})\arrow{r}\arrow{u}{b^*} & {} \\
{}\arrow{r} & CH^0_{\red}(\mathcal{A},\mathcal{A}^{\vee})\arrow{r}{B^*}\arrow{u}{b^*} & CH^{-1}_{\red}(\mathcal{A},\mathcal{A}^{\vee})\arrow{r}{B^*}\arrow{u}{b^*} & CH^{-2}_{\red}(\mathcal{A},\mathcal{A}^{\vee})\arrow{r}\arrow{u}{b^*} & {} \\
{} & {}\arrow{u} & {}\arrow{u} & {}\arrow{u} & {}
\end{tikzcd}
\end{equation}
where $(CH^*_{\red}(\mathcal{A},\mathcal{A}^{\vee},b^*)$ is the dual complex to the Hochschild complex. We obtain the reduced negative cyclic cochain complex $CC^*_{-,\red}(\mathcal{A},\mathcal{A}^{\vee})$ of $\mathcal{A}^{\vee}$ by taking the bicomplex consisting of the nonpositive columns of the above bicomplex. Similarly, the reduced positive cyclic cochain complex $CC^*_{+,\red}(\mathcal{A},\mathcal{A}^{\vee})$ is obtained by taking the bicomplex consisting of the positive columns of the bicomplex. The double complex consisting of all columns is called the periodic cyclic cochain complex, and it is denoted $CC^*_{\infty,\red}(\mathcal{A},\mathcal{A}^{\vee})$. 

The proof of~\cite[Lemma 3.6]{HL08} carries over to the filtered case to give a condition under which the connecting homomorphism is an isomorphism.
\begin{lemma}\label{hciso}
If there is an integer $N$ such that $HH_{\red}^*(\mathcal{A},\mathcal{A}^{\vee}) = 0$ whenever $*>N$, then for any integer $n$, there is an isomorphism
\[
HC^{n+1}_{+,\red}(\mathcal{A}) \xrightarrow{\sim} HC^n_{-,\red}(\mathcal{A}^{\vee}) \]
induced by $B^*$. \qed
\end{lemma}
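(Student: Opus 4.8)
The plan is to transport the proof of \cite[Lemma 3.6]{HL08} into the present filtered, curved setting, so I will only indicate the main points. First I would use the column filtration of the bicomplex~\eqref{b*B*complex} to exhibit one of the two reduced cyclic cochain complexes $CC^*_{+,\red}(\mathcal{A},\mathcal{A}^{\vee})$, $CC^*_{-,\red}(\mathcal{A},\mathcal{A}^{\vee})$ as a subcomplex of the periodic complex $CC^*_{\infty,\red}(\mathcal{A},\mathcal{A}^{\vee})$, with the other as the corresponding quotient complex, so that $B^*$ realizes the connecting homomorphism of the resulting short exact sequence. This places the map $HC^{n+1}_{+,\red}(\mathcal{A}) \xrightarrow{B^*} HC^n_{-,\red}(\mathcal{A}^{\vee})$ into a long exact sequence whose remaining terms are periodic cyclic cohomology groups $HC^*_{\infty,\red}(\mathcal{A},\mathcal{A}^{\vee})$; the identification of this connecting homomorphism with the map induced by $B^*$ is the usual zig-zag argument, lifting a positive-cyclic cocycle to the periodic complex, applying the total differential $b^* + uB^*$, and observing that the only term leaving the subcomplex is the one produced by $uB^*$. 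The lemma therefore reduces to showing that $HC^*_{\infty,\red}(\mathcal{A},\mathcal{A}^{\vee}) = 0$ in every degree.

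To prove this vanishing I would filter the periodic complex by columns and study the associated spectral sequence, whose first page is assembled from shifted copies of $HH^*_{\red}(\mathcal{A},\mathcal{A}^{\vee})$ with first differential induced by $B^*$ on Hochschild cohomology. Running the analogous spectral sequences for $CC^*_{+,\red}$ and $CC^*_{-,\red}$, the hypothesis $HH^*_{\red}(\mathcal{A},\mathcal{A}^{\vee}) = 0$ for $*>N$ forces $HC^*_{+,\red}(\mathcal{A})$ and $HC^*_{-,\red}(\mathcal{A}^{\vee})$ to vanish in all sufficiently large degrees; since $u$ acts invertibly on the periodic complex, its cohomology is $2$-periodic, so combining this periodicity with the long exact sequence and the high-degree vanishing of the positive and negative groups forces $HC^*_{\infty,\red}(\mathcal{A},\mathcal{A}^{\vee})$ to vanish identically. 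On the level of first pages this is the assertion that the $B^*$-differential makes the doubly infinite array of Hochschild cohomology groups exact, which is where the hypothesis does its work.

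The step I expect to be the genuine obstacle, as opposed to a formal transcription of \cite{HL08}, is controlling this homological algebra in the filtered, curved situation: the cyclic complexes are unbounded in the $u$-direction, so the various sums, products, and completions must be handled with care, the column filtrations must be shown to be complete and exhaustive so that the spectral sequences converge, and the one-sided Hochschild bound must be propagated to the cyclic groups without the benefit of a two-sided bound. This is exactly where the gapped filtered structure of $\mathcal{A}$ enters, since it supplies $\nu$-adic convergence throughout; granting it, the argument carries over, the curvature $\mathfrak{m}_0 \neq 0$ intervening only through the modified contracting homotopy $\widetilde{s}$ built into $B$, which on the reduced complexes introduces nothing new since its $\mathfrak{m}_0$-term there vanishes.
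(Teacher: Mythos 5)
The paper gives no argument of its own for this lemma beyond the assertion that the proof of [HL08, Lemma 3.6] carries over to the filtered case, so I can only assess your reconstruction on its merits. Your first paragraph is the right architecture: the positive columns of~\eqref{b*B*complex} form a subcomplex of the periodic complex with the nonpositive columns as quotient, the connecting homomorphism is the map induced by $B^*$, and the lemma is thereby reduced to the vanishing of the reduced periodic cyclic cohomology.

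The gap is in your second paragraph, in the claim that the hypothesis forces \emph{both} $HC^*_{+,\red}(\mathcal{A})$ and $HC^*_{-,\red}(\mathcal{A}^{\vee})$ to vanish in all sufficiently large degrees. In total degree $n$, the column of~\eqref{b*B*complex} indexed by $j$ contributes a shifted copy of $CH^{\,n-2j}_{\red}(\mathcal{A},\mathcal{A}^{\vee})$ to the first page of the column-filtration spectral sequence: concretely, the components $\phi_i$ of a negative cyclic cocycle of total degree $n$ satisfy $|\phi_{i+1}|=|\phi_i|+2$ with $|\phi_0|=n$, while the components of a positive cochain have Hochschild degrees $n-2, n-4,\dots$ tending to $-\infty$. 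The one-sided bound $HH^{*}_{\red}=0$ for $*>N$ therefore kills the first page of the negative complex in total degrees $n>N$, but says nothing about the positive complex in any total degree; the two halves together exhaust every Hochschild degree of a fixed parity, so they cannot both have trivial first page unless $HH^*_{\red}$ vanishes outright. Feeding the one available vanishing statement into your long exact sequence together with $2$-periodicity yields only $HC^{n}_{\infty,\red}\cong HC^{n}_{+,\red}$ for $n>N$, and the vanishing of the latter---equivalently the exactness of the doubly infinite $B^*$-array that you correctly identify as the crux---is exactly what remains unproved. That this residue is not a formality can be seen already for $\mathcal{A}=R$: the reduced Hochschild cohomology is concentrated in a single degree, yet $B^*$ vanishes identically on the reduced complex, every column of the first page is a copy of $R$, and the periodic cyclic cohomology is $R((u))\neq 0$. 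So whatever argument [HL08, Lemma 3.6] actually runs must do more than combine boundedness of $HH^*_{\red}$ with spectral-sequence and completion formalities; your third paragraph's convergence concerns, while legitimate, are not where the real difficulty sits.
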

The reduced Hochschild cohomology $HH_{\red}^*(\mathcal{A},\mathcal{A}^{\vee})$ is defined by taking the cohomology of the $R$-dual of the reduced Hochschild complex $CC_*^{\red}(\mathcal{A})$. 
\subsection{\texorpdfstring{$\infty$}{Infinity}-inner products and \texorpdfstring{$\infty$}{infinity}-cyclic potentials}
The following terminology is due to Tradler~\cite{Tra08}.
\begin{definition}
An $A_{\infty}$-bimodule homomorphism $\phi\colon\mathcal{A}_{\Delta}\to\mathcal{A}^{\vee}$ is called an \textit{$\infty$-inner product}. 
\end{definition}
For an uncurved $A_{\infty}$-algebra, one possible definition of a (weak proper) Calabi--Yau structure is a bimodule quasi-isomorphism $\mathcal{A}_{\Delta}\to\mathcal{A}^{\vee}$~\cite[Remark 49]{Gan23}. This data can, equivalently, be packaged as a negative cyclic cohomology class by Lemma~\ref{hciso}. Even for a curved $A_{\infty}$-algebra, one can obtain an $\infty$-inner product from a cocycle in $CC^*_{\red,-}(\mathcal{A},\mathcal{A}^{\vee})$.
\begin{lemma}[Cho-Lee~\cite{CL11}]\label{cocycletohom}
Consider a negative cyclic cocycle $\phi\in CC^*_{-,\red}(\mathcal{A},\mathcal{A}^{\vee})$, whose restriction to the $-i$th column of~\eqref{b*B*complex} is denoted $\phi_i$.
Then the sequence of maps
\begin{align*}
\phi_{p,q}\colon\mathcal{A}^{\otimes p}\otimes\underline{\mathcal{A}}\otimes\mathcal{A}^{\otimes q}\to\mathcal{A}^{\vee}
\end{align*}
defined by
\begin{align}
\phi_{p,q}(\alpha\otimes\underline{v}\otimes\beta)(w) = \psi_0(\alpha\otimes v\otimes\beta)(w)-\psi_0(\beta\otimes w\otimes\alpha)(v)
\end{align}
where
\begin{align*}
\alpha = a_1\otimes\cdots\otimes a_p\in\mathcal{A}^{\otimes p} \\
\beta = b_1\otimes\cdots\otimes b_{q}\in\mathcal{A}^{\otimes q}
\end{align*}
and $v,w\in\mathcal{A}$, is an $A_{\infty}$-bimodule homomorphism also denoted $\phi\colon\mathcal{A}_{\Delta}\to\mathcal{A}^{\vee}$. \qed
\end{lemma}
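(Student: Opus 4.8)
The plan is to verify directly that the maps $\phi_{p,q}$ satisfy the defining equation $\widehat{\phi}\circ\widehat{\delta} = \widehat{\delta}'\circ\widehat{\phi}$ for an $A_{\infty}$-bimodule homomorphism $\mathcal{A}_{\Delta}\to\mathcal{A}^{\vee}$. The only input needed is the cocycle condition on the leading term $\phi_0$ (written $\psi_0$ in the statement): the lowest-order component of the negative cyclic cocycle equation reads $b^{*}\phi_0 = 0$, i.e. $\phi_0$ is a cocycle in the reduced Hochschild cochain complex $(CH^{*}_{\red}(\mathcal{A},\mathcal{A}^{\vee}),b^{*})$. Since every operation involved is $R$-linear, filtration-compatibility of the $\phi_{p,q}$ is immediate from that of $\phi_0$, so the content is the chain-level identity. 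First I would fix the ``uncurling'' isomorphism identifying a reduced Hochschild cochain $\psi\colon(\mathcal{A}[1]/R\cdot 1)^{\otimes k}\to\underline{\mathcal{A}^{\vee}}[1]$ with an $R$-valued functional on $(\mathcal{A}[1]/R\cdot 1)^{\otimes k}\otimes\mathcal{A}[1]$, with a fixed Koszul sign, and record once and for all that under this identification $b^{*}$ becomes the differential of the reduced \emph{cyclic} bar complex --- the one compatible with the permutation $t$ and its sign $(-1)^{\maltese_{k-1}\cdot|a_k|'}$ used to form $N$. This turns the problem into a statement manifestly invariant under cyclic rotation of the word $(\alpha, v, \beta, w)$, which is exactly where the relative minus sign in the formula for $\phi_{p,q}$ earns its keep.

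With this set-up the proof is a term chase. Evaluate $\widehat{\phi}\circ\widehat{\delta} - \widehat{\delta}'\circ\widehat{\phi}$ on $\alpha\otimes\underline{v}\otimes\beta$ and pair the resulting element of $\mathcal{A}^{\vee}$ against an arbitrary $w\in\mathcal{A}$; an equality in $\mathcal{A}^{\vee}$ holds iff it holds after all such pairings. Substitute $\phi_{p,q}(\alpha\otimes\underline{v}\otimes\beta)(w) = \phi_0(\alpha\otimes v\otimes\beta)(w) - \phi_0(\beta\otimes w\otimes\alpha)(v)$, and expand $\widehat{\delta},\widehat{\delta}'$ into their constituents: the diagonal-bimodule operations are just the $\mathfrak{m}_k$, while the dual-bimodule operations $\mathfrak{m}^{\vee}_{k,\ell}$ --- by their definition $\mathfrak{m}^{\vee}_{k,\ell}(\cdots)(w) = (-1)^{\epsilon}v^{\vee}(\mathfrak{m}_{k+\ell+1}(\cdots\otimes w\otimes\cdots))$ --- convert each $\mathcal{A}^{\vee}$-valued term into $\phi_0$ applied to a word obtained by inserting a single $\mathfrak{m}_j$ into a cyclic rotation of $(\alpha, v, \beta, w)$. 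Collecting terms, the two summands of the formula reassemble the left- and right-hand contributions into $b^{*}\phi_0$ evaluated on the two relevant cyclic rotations of $\alpha\otimes v\otimes\beta\otimes w$, and both vanish since $b^{*}\phi_0 = 0$.

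The bulk of the work, and the main obstacle, is verifying that this reassembly actually closes up --- that the terms in which an $\mathfrak{m}$-operation straddles the boundary between the $\alpha$-block, the distinguished slot $v$, the $\beta$-block, and the output slot $w$ match correctly. Concretely I would sort the terms into four types: (i) $\mathfrak{m}_j$ acting entirely inside $\alpha$ or inside $\beta$, coming from the bar part of $\widehat{\delta}$; (ii) $\mathfrak{m}_j$ straddling $v$, coming from the bimodule part of $\widehat{\delta}$ with target $\mathcal{A}_{\Delta}$; (iii) $\mathfrak{m}_j$ straddling $w$, produced by $\mathfrak{m}^{\vee}$ inside $\widehat{\delta}'$; and (iv) terms that swallow the whole input. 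One then checks type by type that, after the uncurling identification and using that $\phi_0$ lies in the reduced complex (so unit insertions vanish and the $\mathfrak{m}_0$-contributions behave), the sign prescribed by $b^{*}$ agrees with the combination of the $\epsilon$-sign of $\mathfrak{m}^{\vee}$ and the $t$-sign. This is a finite but delicate sign computation; with the cyclic-bar reformulation it is essentially the standard verification (cf.~\cite{Tra08,CL11}) that a cyclically symmetric Hochschild cocycle defines an $\infty$-inner product, now carried out in the curved, filtered, reduced setting. Finally I would observe that the lemma does not assert strict unitality of $\phi$, so no further normalization is required, and that the construction is evidently natural in $\phi_0$, which is what makes it usable later.
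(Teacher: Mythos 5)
There is a genuine gap at the crux of your argument: the negative cyclic cocycle condition does \emph{not} say that $b^{*}\phi_0 = 0$. As the paper states immediately after the lemma, the condition is $b^{*}\phi_i = B^{*}\phi_{i+1}$; in particular $b^{*}\phi_0 = B^{*}\phi_1$, which is nonzero in general (the column $0$ of the negative cyclic bicomplex receives $B^{*}$ from column $-1$, so the total-differential condition there couples $\phi_0$ to $\phi_1$). Your claim that the two reassembled terms ``both vanish since $b^{*}\phi_0=0$'' therefore fails, and if it were true it would prove too much: each summand $\psi_0(\alpha\otimes v\otimes\beta)(w)$ alone would already define a bimodule homomorphism, making the relative minus sign in the definition of $\phi_{p,q}$ superfluous — contradicting your own (correct) observation that this sign is where the construction ``earns its keep.''

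The correct mechanism, which is what the Cho--Lee argument the paper defers to actually uses, is the following. Since $N(1-t)=0$ in the group algebra of the cyclic group and $B=(1-t)\widetilde{s}N$, the error term $B^{*}\phi_1=b^{*}\phi_0$ is annihilated by $(1-t)^{*}$, i.e.\ it is invariant under the dual cyclic rotation. The two words $\alpha\otimes v\otimes\beta\otimes w$ and $\beta\otimes w\otimes\alpha\otimes v$ appearing in the definition of $\phi_{p,q}$ are cyclic rotations of one another, so the two copies of $b^{*}\phi_0 = B^{*}\phi_1$ produced by your term chase are \emph{equal} rather than separately zero, and they cancel in the difference. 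This is precisely where the higher columns $\phi_1,\phi_2,\dots$ of the negative cyclic cocycle — the $S^1$-equivariance data — enter the proof, which the paper flags as the essential point; your proposal never uses them. The rest of your outline (uncurling, sorting terms by where $\mathfrak{m}_j$ acts, the sign bookkeeping against $\epsilon$ and the $t$-sign) is the right skeleton and matches the standard verification, but the argument does not close without replacing ``$b^{*}\phi_0=0$'' by ``$b^{*}\phi_0=B^{*}\phi_1$ is $t^{*}$-invariant.''
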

The proof that $\phi$ is a bimodule homomorphism uses the negative cyclic cocycle condition $b^*\phi_i = B^*\phi_{i+1}$, and this is the main connection between the trivialization of the $S^1$-action on the Fukaya category and our construction of the open Gromov--Witten potential. This particular chain-level correspondence between (negative) cyclic cocycles and bimodule homomorphisms is convenient, because the homomorphisms $\phi$ constructed this way admit useful symmetries.
\begin{corollary}
The bimodule homomomorphisms $\phi\colon\mathcal{A}_{\Delta}\to\mathcal{A}^{\vee}$ of Lemma~\ref{cocycletohom} are skew-symmetric and closed, meaning, respectively, that
\begin{itemize}
\item[•] for $\alpha$, $\beta$, $v$, and $w$ as in the statement of Lemma~\ref{cocycletohom}, we have that
\begin{align}
\phi_{p,q}(\alpha\otimes\underline{v}\otimes\beta)(w) = (-1)^{\kappa}\phi_{q,p}(\beta,w,\alpha)(v)
\end{align}
where
\[\kappa = \left(\sum_{i=1}^p |a_i|'+|v|'\right)\cdot\left(\sum_{j=1}^q|b_j|'+|w|'\right)\]
and;
\item[•] for $a_1\otimes\cdots\otimes a_{\ell+1}\in\mathcal{A}^{\otimes\ell+1}$ and any triple $1\leq i<j<k\leq\ell+1$, we have that
\begin{align}
&(-1)^{\kappa_i}\phi(\cdots\otimes\underline{a_i}\otimes\cdots)(a_j)+(-1)^{\kappa_j}\phi(\cdots\otimes\underline{a_j}\otimes\cdots)(a_k) \nonumber \\
&+(-1)^{\kappa_k}\phi(\cdots\otimes\underline{a_k}\otimes\cdots\otimes)(a_i) = 0
\end{align}
where the sign is determined by
\[ \kappa_* = \left(|a_1|'+\cdots+|a_*|'\right)\cdot\left(|a_{*+1}|'+\cdots+|a_k|'\right)\]
and where the inputs are cyclically ordered. \qed
\end{itemize}
\end{corollary}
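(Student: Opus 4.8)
I would prove both assertions directly from the explicit formula
\[
\phi_{p,q}(\alpha\otimes\underline v\otimes\beta)(w)=\psi_0(\alpha\otimes v\otimes\beta)(w)-\psi_0(\beta\otimes w\otimes\alpha)(v)
\]
of Lemma~\ref{cocycletohom}, with no further input. The structural point is that the right-hand side is already an \emph{antisymmetrization}: it has the shape $X-Y$, where $X=\psi_0(\alpha\otimes v\otimes\beta)(w)$ treats $v$ as the distinguished (bimodule) entry and evaluates against $w$, and $Y$ is obtained by interchanging the block $\alpha\otimes\underline v$ with the block $\beta\otimes w$. Consequently neither property uses that $\phi$ is a negative cyclic cocycle — that hypothesis, together with the operations $\mathfrak m_k$, enters only in the proof that $\phi$ is a bimodule homomorphism, which we take as given — and neither is sensitive to whether $\mathfrak m_0$ vanishes, so the curved case is no harder. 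The entire content is therefore the bookkeeping of Koszul signs in the conventions of Section~\ref{hochinvts}: the sign $\maltese$ arising from the right action of $\mathcal A$ on itself, and the sign $\epsilon$ in the dual bimodule maps $\mathfrak m^\vee_{k,\ell}$.

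For skew-symmetry, I would substitute $(\beta,w,\alpha,v)$ for $(\alpha,v,\beta,w)$ into the formula; the two $\psi_0$-summands are simply exchanged, so $\phi_{q,p}(\beta\otimes\underline w\otimes\alpha)(v)$ agrees with $\phi_{p,q}(\alpha\otimes\underline v\otimes\beta)(w)$ up to the Koszul sign of transposing the graded block $\alpha\otimes\underline v$, of reduced degree $\sum_i|a_i|'+|v|'$, past the graded block $\beta\otimes w$, of reduced degree $\sum_j|b_j|'+|w|'$. That transposition sign is exactly $(-1)^\kappa$ for the stated $\kappa$, and all that remains is to confirm that commuting the evaluated entry into $\psi_0(\cdots)\in\mathcal A^\vee$ produces no extra sign beyond what is already recorded in $\epsilon$.

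For closedness, fix $a_1\otimes\cdots\otimes a_{\ell+1}$ and a triple $i<j<k$, viewed as three marked points on the cyclic word $a_1\otimes\cdots\otimes a_{\ell+1}$. Expanding each of the three terms by the formula, each becomes a difference of two expressions of the form $\pm\psi_0(\xi_m)(a_m)$, where $\xi_m$ denotes the tensor obtained from $a_1\otimes\cdots\otimes a_{\ell+1}$ by deleting $a_m$ and rotating cyclically so that the entry following $a_m$ comes first; explicitly the term $\phi(\cdots\otimes\underline{a_i}\otimes\cdots)(a_j)$ contributes $\psi_0(\xi_j)(a_j)-\psi_0(\xi_i)(a_i)$, the term $\phi(\cdots\otimes\underline{a_j}\otimes\cdots)(a_k)$ contributes $\psi_0(\xi_k)(a_k)-\psi_0(\xi_j)(a_j)$, and the term $\phi(\cdots\otimes\underline{a_k}\otimes\cdots)(a_i)$ contributes $\psi_0(\xi_i)(a_i)-\psi_0(\xi_k)(a_k)$. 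The six summands therefore cancel in three pairs by telescoping, once one checks that each $\psi_0(\xi_m)(a_m)$ occurs with opposite total signs in the two terms that contain it after the prefactors $(-1)^{\kappa_i},(-1)^{\kappa_j},(-1)^{\kappa_k}$ are inserted. I expect this final sign comparison to be the only real obstacle: one must verify that $\kappa_*$ is precisely the correction offsetting the Koszul signs generated when a $\phi$-term is rewritten via $\psi_0$ — a rewriting that both redistributes the $\ell-2$ passive entries between the $\alpha$- and $\beta$-blocks and commutes the evaluated entry into $\psi_0$ against $\epsilon$. I would handle this by first verifying the unsigned identity to pin down the matching of terms and then reinstating all signs, reducing if convenient to the case in which at most one passive entry lies in each arc between consecutive marked points.
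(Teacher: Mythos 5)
Your argument is correct and is exactly the intended one: the paper states this corollary with no proof (it is taken as immediate from the formula of Lemma~\ref{cocycletohom}, following Cho--Lee), and your observation that skew-symmetry is the Koszul transposition of the two blocks in the antisymmetrized formula, while closedness is a three-term telescoping of the rotated words $\psi_0(\xi_m)(a_m)$, is precisely that verification. Your remark that neither property uses the negative cyclic cocycle condition (only the bimodule-homomorphism property does) is also accurate.
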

\begin{remark}
A cyclic pairing on $\mathcal{A}$ can be thought of as a closed skew-symmetric $\infty$-inner product $\psi\colon\mathcal{A}_{\Delta}\to\mathcal{A}^{\vee}$ for which $\psi_{p,q} = 0$ whenever $p>0$ or $q>0$.
\end{remark}
We can relate the $\infty$-inner product obtained from a negative cyclic cocycle $\phi$ to the trace associated to $B^*\phi$.
\begin{lemma}\label{homvstrace}
For an $\infty$-inner product obtained via Lemma~\ref{homvstrace}, we have the identity
\begin{align}
\phi_0(1\otimes\mathfrak{m}_2(a_1,a_2)) = \phi_{0,0}(\underline{a_1})(a_2).
\end{align}
\end{lemma}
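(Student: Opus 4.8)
The statement is established by a direct unwinding of the definitions; conceptually it is the $\infty$-inner product analogue of the elementary fact that $\langle\mathfrak{m}_2(a_1,a_2),1\rangle = \pm\langle a_1,a_2\rangle$ for a strictly cyclic pairing, so the only real content is the sign bookkeeping. The plan is to express both sides in terms of the negative cyclic cocycle $\phi$ and its components $\phi_i$, and then to match them.

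First I would expand the right-hand side using Lemma~\ref{cocycletohom} with $p = q = 0$, so that the tensor words $\alpha$ and $\beta$ are empty. This writes $\phi_{0,0}(\underline{a_1})(a_2)$ as a signed difference of the two values $\phi_0(\underline{a_2}\otimes a_1)$ and $\phi_0(\underline{a_1}\otimes a_2)$ of the zeroth-column component $\phi_0$ of $\phi$ on length-one reduced Hochschild chains. On the other hand, the left-hand side $\phi_0(1\otimes\mathfrak{m}_2(a_1,a_2))$ is by definition the value of the same $\phi_0$ on the length-one chain $\underline 1\otimes\mathfrak{m}_2(a_1,a_2)$; here the unit in the module slot is precisely the one prepended by the reduced Connes operator $B$ when the trace is extracted from $\phi$ by $B^*$, recalling that on $CH_*^{\red}(\mathcal{A})$ the operator $B$ is the ordinary Connes operator.

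The key step is to evaluate the negative cyclic cocycle condition $b^*\phi_i = B^*\phi_{i+1}$ on the length-two reduced Hochschild chain $\underline 1\otimes a_1\otimes a_2$ carrying the strict unit in its module slot. The $B^*$-side vanishes: the reduced Connes operator prepends a unit, hence annihilates any reduced chain whose module slot is itself the unit, so $B(\underline 1\otimes a_1\otimes a_2)=0$. Therefore $\phi_0$ kills $b(\underline 1\otimes a_1\otimes a_2)$, and strict unitality finishes the computation. In the first sum of the Hochschild differential the unit lies inside every operation $\mathfrak{m}_{i+j+1}$, forcing it to be $\mathfrak{m}_2$; that sum therefore contributes only the two length-one chains $\pm\underline{a_2}\otimes a_1$ and $\pm\underline{a_1}\otimes a_2$. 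In the second sum, only the insertion of $\mathfrak{m}_2(a_1,a_2)$ yields a chain of length one, namely $\pm\underline 1\otimes\mathfrak{m}_2(a_1,a_2)$, the $\mathfrak{m}_1$- and $\mathfrak{m}_0$-insertions changing the length and so being invisible to $\phi_0$. Consequently $\phi_0$ of the sum of these three chains vanishes, and comparing with the expansion of $\phi_{0,0}(\underline{a_1})(a_2)$ from the first step yields the claimed identity.

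The only genuine obstacle is the sign bookkeeping. One must reconcile the Koszul signs~\eqref{cross} appearing in the Hochschild differential and in the dualization producing $b^*$, the unit signs $\mathfrak{m}_2(x,1)=(-1)^{|x|}x$, and the signs in the formula of Lemma~\ref{cocycletohom}, so that the three terms produced above assemble into $\phi_0(1\otimes\mathfrak{m}_2(a_1,a_2)) = \phi_{0,0}(\underline{a_1})(a_2)$ with no leftover sign. I expect this to be routine once a coherent sign convention is fixed, with nothing conceptually harder than in the classical strictly cyclic case.
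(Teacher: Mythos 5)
Your argument is precisely the paper's proof: evaluate the negative cyclic cocycle identity $b^*\phi_0 = B^*\phi_1$ on the chain $\underline{1}\otimes a_1\otimes a_2$, note that the $B^*$ side vanishes in the reduced complex because $B$ places the unit in a non-module slot, and use strict unitality to reduce $b(\underline{1}\otimes a_1\otimes a_2)$ to the three $\mathfrak{m}_2$-terms, which Lemma~\ref{cocycletohom} with $p=q=0$ identifies with $\phi_{0,0}(\underline{a_1})(a_2)$. One small imprecision: the $\mathfrak{m}_1$-insertions such as $\underline{1}\otimes\mathfrak{m}_1(a_1)\otimes a_2$ do \emph{not} change the length of the chain, so they are not ``invisible to $\phi_0$'' for the reason you give --- but the paper's own displayed computation drops these terms equally silently, so your proof matches it in both substance and level of detail.
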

\begin{proof}
We have that $b^*\phi(1,a_1,a_2) = B^*\psi(1,a_1,a_2) = 0$ for a Hochschild cochain $\psi$ because $\phi$ is a negative cyclic cocycle. A direct calculation shows that
\begin{align*}
0&=b^*\phi_0(1\otimes a_1\otimes a_2)  \\
&= \phi_0(\mathfrak{m}_2(1\otimes a_1)\otimes a_2)-\phi_0(1\otimes\mathfrak{m}_2(a_1\otimes a_2))+(-1)^{|a_2|'\cdot(|a_1|'+1)}\phi_0(\mathfrak{m}_2(a_2\otimes 1)\otimes a_1)  \\
&= \phi_0(a_1\otimes a_2)-\phi_0(1\otimes\mathfrak{m}_2(a_1\otimes a_2))+(-1)^{|a_1|'\cdot|a_2|'+|a_2|'+|a_2|}\phi_0(a_2\otimes a_1).
\end{align*}
\end{proof}
We can think of the expression defined in Lemma~\ref{homvstrace} as the formula for a cyclic pairing on a canonical model of $\mathcal{A}$, in analogy in Kontsevich--Soibelman's theorem in the unfiltered setting. An $\infty$-inner product $\phi$ is said to be \textit{homologically nondegenerate} if for any nonzero $[a_1]\in H^*(\mathcal{A},\mathfrak{m}_{1,0})$ there is an element $[a_2]\in H^*(\mathcal{A},\mathfrak{m}_{1,0})$ such that $\phi_{0,0}(\underline{a_1})(a_2)$ on the chain level, for some representatives of these classes.
\begin{lemma}[Cho--Lee~\cite{CL11}]\label{KSanalogue}
Let $\phi$ be a closed skew-symmetric homologically nondegenerate $\infty$-inner product on $\mathcal{A}$. Then there is a canonical model for $\mathcal{A}$ carrying a strictly cyclic pairing such that we have a commutative diagram
\[
\begin{tikzcd}
\mathcal{A}\arrow{d}{\phi} & H^*(\mathcal{A},\mathfrak{m}_{1,0})\arrow{l}\arrow{d} \\
\mathcal{A}^*\arrow{r} & H^*(\mathcal{A},\mathfrak{m}_{1,0})^*
\end{tikzcd}
\]
where the right arrow comes from the strictly cyclic pairing, and the top arrow is a quasi-isomomorphism. \qed
\end{lemma}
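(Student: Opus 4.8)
The plan is to adapt Kontsevich--Soibelman's argument, in the form given by Cho--Lee, to the curved filtered setting: first pass to a canonical model, transfer $\phi$ there, and then correct the transferred $A_\infty$-structure by an automorphism so that the pairing coming from the $\phi_{0,0}$-component becomes strictly cyclic.

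\emph{Step 1: the canonical model.} I would choose a gapped filtered contraction of $(\mathcal{A},\mathfrak{m}_{1,0})$ onto its cohomology (a splitting of $\mathfrak{m}_{1,0}$ together with a projection) and apply the filtered homotopy transfer theorem to obtain a canonical model $\mathcal{H}$ with $\mathfrak{m}^{\mathcal{H}}_{1,0} = 0$ and underlying module $H^*(\mathcal{A},\mathfrak{m}_{1,0})$, together with an $A_\infty$-quasi-isomorphism $f\colon\mathcal{H}\to\mathcal{A}$. Pulling $\phi$ back along $f$ — restrict scalars along $f$, precompose with $f$ on the $\mathcal{A}$-tensor factors, and compose with the canonical maps $\mathcal{H}_\Delta\to f^*\mathcal{A}_\Delta$ and $f^*\mathcal{A}^\vee\to\mathcal{H}^\vee$ — gives an $\infty$-inner product $\phi^{\mathcal{H}}\colon\mathcal{H}_\Delta\to\mathcal{H}^\vee$. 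A direct check, using that pullback commutes with the differentials of the $(b^*,B^*)$-complex~\eqref{b*B*complex}, shows $\phi^{\mathcal{H}}$ is again closed and skew-symmetric. By Lemma~\ref{homvstrace}, the bilinear form $\langle a_1,a_2\rangle\coloneqq\phi^{\mathcal{H}}_{0,0}(\underline{a_1})(a_2)$ on $\mathcal{H}$ induces on cohomology (which is $\mathcal{H}$ itself) the same map $H^*(\mathcal{A})\to H^*(\mathcal{A})^*$ that $\phi_0$ does, so homological nondegeneracy of $\phi$ makes $\langle\cdot,\cdot\rangle$ a perfect pairing.

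\emph{Step 2: removing the higher components.} The heart of the argument is to show that, after composing $f$ with a filtered $A_\infty$-automorphism of $\mathcal{H}$ (which again yields a canonical model) and correspondingly modifying $\phi^{\mathcal{H}}$ within its homotopy class, one may assume $\phi^{\mathcal{H}}_{p,q} = 0$ for all $p+q\geq 1$; by the remark identifying such $\infty$-inner products with cyclic pairings, this is equivalent to strict cyclicity of $\langle\cdot,\cdot\rangle$ with respect to $\{\mathfrak{m}^{\mathcal{H}}_k\}$. I would run this as an induction on the valuation $\nu$ and, within a fixed valuation, on $p+q$ (legitimate because $\mathcal{A}$ is gapped and $\mathfrak{m}_{0,(0,0)} = 0$). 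At the leading order the bimodule-homomorphism equation, together with $\mathfrak{m}^{\mathcal{H}}_{1,0} = 0$, says that the lowest nonvanishing higher component is a cocycle in the relevant Hochschild-type deformation complex; the skew-symmetry and closedness identities established above — i.e. the negative-cyclic nature of $\phi^{\mathcal{H}}$ — let one realize this cocycle simultaneously as a bimodule (pre-)homotopy $\widehat{\delta}'\widehat{H}\pm\widehat{H}\widehat{\delta}$ and as the leading variation of $\{\mathfrak{m}^{\mathcal{H}}_k\}$ under an infinitesimal $A_\infty$-automorphism, so the higher component is traded against the automorphism. Composing all these automorphisms (convergent by gappedness) produces the canonical model with its strictly cyclic pairing; composing with $f$ gives the quasi-isomorphism in the top row, and the square commutes up to homotopy, hence on cohomology, since every modification of $\phi^{\mathcal{H}}$ stayed within its homotopy class and was compensated by an automorphism.

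The main obstacle I expect is the inductive step in Step 2: showing that the leading higher component of $\phi^{\mathcal{H}}$, a priori merely a cocycle, lies in the image of the differential applied to data realizable at once as a change of canonical model and as a bimodule homotopy. This is exactly where ``closed'' and ``skew-symmetric'' are indispensable — they encode the trivialization of Connes' operator, and without the resulting $S^1$-equivariance the obstruction need not vanish. Its vanishing here is the algebraic shadow of the fact, recalled in the introduction, that a \emph{negative cyclic} structure, rather than a bare cyclic pairing, is what survives passage to minimal models over a general field of characteristic zero.
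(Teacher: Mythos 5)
The paper offers no proof of this lemma---it is imported verbatim from Cho--Lee~\cite{CL11}---and your sketch reconstructs exactly their strategy: homotopy transfer of the $\infty$-inner product to the canonical model of $(\mathcal{A},\mathfrak{m}_{1,0})$, followed by an inductive correction of the transferred $A_{\infty}$-structure by automorphisms, with closedness and skew-symmetry forcing the obstruction to vanish at each stage. Since this is the same route the paper takes (by citation), there is nothing further to compare, though note that the step you yourself flag as the main obstacle is where essentially all of the content of~\cite{CL11} lives, and your write-up, like the paper's, leaves it schematic.
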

Any $\phi$ which is both skew-symmetric and closed satisfies a weak analogue of cyclic symmetry.
\begin{lemma}[Cho--Lee~\cite{CL}]\label{weakcyclic}
Let $b,y\in\mathcal{A}$ and suppose that $|b| = 1$. Then for any $N\geq0$, we have that
\begin{align}
&N\sum_{p+q+k = N}\phi_{p,q}(b^{\otimes p}\otimes\underline{\mathfrak{m}_k(b^{\otimes k})}\otimes b^{\otimes q})(y) \\
&=\sum_{\substack{p+q+k = N \\ r+s = k-1}}\phi_{p,q}(b^{\otimes p}\otimes\underline{\mathfrak{m}_k(b^{\otimes r}\otimes y\otimes b^{\otimes s})}\otimes b^{\otimes q})(b) \\
&+\sum_{\substack{p+q+k=N \\ r+s  = p-1}}\phi_{p,q}(b^{\otimes r}\otimes y\otimes b^{\otimes s}\otimes\underline{\mathfrak{m}_k(b^{\otimes k})}\otimes b^{\otimes q})(b) \\
&+\sum_{\substack{p+q+k = N \\ r+s = q-1}}\phi_{p,q}(b^{\otimes p}\otimes\underline{\mathfrak{m}_k(b^{\otimes k})}\otimes b^{\otimes r}\otimes y\otimes b^{\otimes s})(b).
\end{align}
\qed
\end{lemma}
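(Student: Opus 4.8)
This identity is algebraic in nature, and the plan is to deduce it purely from the structure carried by $\phi$: that $\phi$ is an $A_{\infty}$-bimodule homomorphism $\mathcal{A}_{\Delta}\to\mathcal{A}^{\vee}$ (equivalently, following Lemma~\ref{cocycletohom}, that it is assembled from a negative cyclic cocycle), that it is closed and skew-symmetric, and that the operations $\mathfrak{m}_{k}$ satisfy~\eqref{ainftyrels} and are strictly unital. Conceptually, the statement is the weak-cyclic replacement for the elementary fact that, for a \emph{strictly} cyclic pairing $\langle\cdot,\cdot\rangle$, the identity $\langle\mathfrak{m}_{k}(b^{\otimes r}\otimes y\otimes b^{\otimes s}),b\rangle=\pm\langle\mathfrak{m}_{k}(b^{\otimes k}),y\rangle$ (valid because $|b|'=0$ lets one rotate the $b$'s freely) turns $\sum_{k}\tfrac{1}{k+1}\langle\mathfrak{m}_{k}(b^{\otimes k}),b\rangle$ into a genuine potential for the Maurer--Cartan functional; the factor $N$ is the Euler-vector-field weight that survives once cyclic symmetry is weakened. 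Note that for each fixed $N$ all sums in the statement are finite, so no convergence issue arises.

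I would organize the computation around a ``decorated cyclic word'': a circle with $N+1$ letters, one distinguished as the output of $\phi$, one arc of consecutive letters (of length $k\geq 0$, possibly empty) grouped by $\mathfrak{m}_{k}$ to form the bimodule input, and the remaining $p+q=N-k$ letters serving as the ordinary $\phi$-inputs, split into a block before and a block after the bimodule arc. The left-hand side is $N$ copies of the sum, over all such shapes, of the evaluation in which every one of the $N$ input letters is $b$ and the output is $y$. The core of the argument is then to apply the $A_{\infty}$-bimodule homomorphism equation $\widehat{\phi}\circ\widehat{\delta}=\widehat{\delta}'\circ\widehat{\phi}$ with all inputs set equal to $b$. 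The terms on the two sides are of three types: bar-differential terms, which insert an $\mathfrak{m}_{j}(b^{\otimes j})$ among the input letters; diagonal-bimodule terms, coming from the action of $\mathcal{A}$ on $\mathcal{A}_{\Delta}$, which replace the bimodule slot by $\mathfrak{m}_{k}$ applied to a string of input letters with one distinguished factor; and dual-bimodule terms, coming from $\mathfrak{m}^{\vee}$, which wrap a string of input letters around the output.

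Next I would bring every term to the normal form $\phi_{p,q}(\,b\text{-string}\otimes\underline{\mathfrak{m}_{k}(\cdots)}\otimes b\text{-string}\,)(\ast)$ using skew-symmetry to interchange the roles of bimodule slot and output and closedness (a cyclic three-term relation among these roles) to rotate the output around the circle. In the normal form, $\ast$ is either $y$ and all other letters are $b$, or $\ast=b$ and exactly one of the letters — either inside $\mathfrak{m}_{k}$, or in the left $b$-string, or in the right $b$-string — is $y$; these three possibilities for where $y$ lands are precisely $T_{1}$, $T_{2}$, and $T_{3}$. The factor $N$ emerges because, after these manipulations, each of the $N$ cyclic rotations of a given $b$-heavy configuration contributes an equal copy of a term of the left-hand side sum, equality holding since all rotated letters are $b$ and the accompanying signs are trivial as $|b|'=0$; this is the combinatorial shadow of the norm operator $1+t+\cdots+t^{k-1}$ inside Connes' operator $B$, the same operator governing the passage between $b^{*}\phi_{i}$ and $B^{*}\phi_{i+1}$ in Lemma~\ref{cocycletohom}. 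Terms carrying a nested $\mathfrak{m}(\cdots\mathfrak{m}(\cdots)\cdots)$ cancel in pairs by~\eqref{ainftyrels}, and terms in which $\mathfrak{m}_{k}$ receives the strict unit with $k\neq 2$ vanish by strict unitality, with the residual $\mathfrak{m}_{2}(1,-)$-contributions telescoping away. Collecting everything yields the claimed formula.

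The step I expect to be the main obstacle is the bookkeeping of signs and of which terms cancel: one must reconcile the Koszul signs of~\eqref{cross}, the signs $\kappa$ and $\kappa_{*}$ appearing in skew-symmetry and closedness, the sign $\epsilon$ in $\mathfrak{m}^{\vee}$, and the signs in the bar and bimodule differentials, and check both that the nested-$\mathfrak{m}$ and strict-unit terms cancel exactly and that the surviving terms reassemble with matching signs. What makes this tractable is the hypothesis $|b|=1$: it forces $|b|'=0$, so every sign depending only on the $b$-inputs is trivial and only the comparatively few $y$-dependent signs need to be tracked carefully.
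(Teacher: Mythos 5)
First, a point of comparison: the paper does not prove this lemma at all --- it is quoted from Cho--Lee~\cite{CL} with a \qed --- so the only benchmark is the cited source, whose argument is a term-by-term manipulation using precisely the three properties you identify (the bimodule homomorphism equation, skew-symmetry, and closedness). Your sketch therefore assembles the right ingredients, but the step where you extract the factor $N$ has a genuine gap. You assert that ``each of the $N$ cyclic rotations of a given $b$-heavy configuration contributes an equal copy of a term of the left-hand side sum.'' That is exactly the assertion that $\phi$ acts like a strictly cyclic pairing on these inputs, which is what the lemma is designed to avoid, and it is false for a merely closed, skew-symmetric $\infty$-inner product. Already for $N=1$ the identity reads
\begin{align*}
&\phi_{0,0}(\underline{\mathfrak{m}_1(b)})(y)+\phi_{1,0}(b\otimes\underline{\mathfrak{m}_0})(y)+\phi_{0,1}(\underline{\mathfrak{m}_0}\otimes b)(y) \\
&\qquad= \phi_{0,0}(\underline{\mathfrak{m}_1(y)})(b)+\phi_{1,0}(y\otimes\underline{\mathfrak{m}_0})(b)+\phi_{0,1}(\underline{\mathfrak{m}_0}\otimes y)(b),
\end{align*}
and the $(p,q)=(0,0)$ terms on the two sides are \emph{not} individually equal: skew-symmetry turns $\phi_{0,0}(\underline{\mathfrak{m}_1(b)})(y)$ into $\pm\phi_{0,0}(\underline{y})(\mathfrak{m}_1(b))$, and moving $\mathfrak{m}_1$ to the other argument costs exactly the curvature corrections $\phi_{1,0}(\mathfrak{m}_0\otimes\underline{\,\cdot\,})$ and $\phi_{0,1}(\underline{\,\cdot\,}\otimes\mathfrak{m}_0)$ supplied by the (curved) bimodule homomorphism equation. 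The identity holds only in aggregate over all $(p,q,k)$ with $p+q+k=N$, and the content of the proof is tracking how the three-term closedness relation and the bimodule equation make the cross terms between different $(p,q,k)$ cancel; ``equal copies under rotation'' is not available.

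Two further points. As written, ``apply $\widehat{\phi}\circ\widehat{\delta}=\widehat{\delta}'\circ\widehat{\phi}$ with all inputs set equal to $b$'' yields an identity in which $y$ never appears; you must specify that $y$ sits in the bimodule slot or in the evaluation slot before the normalization step, and this choice is where the real work starts. Also, the appeals to cancellation of nested $\mathfrak{m}(\cdots\mathfrak{m}(\cdots)\cdots)$ terms via~\eqref{ainftyrels} and to strict unitality are out of place: neither the hypotheses nor the target identity produce nested structure maps or unit insertions (each side of the bimodule equation contains exactly one $\phi$ and one structure map, and the unit enters the paper's constructions only in Lemma~\ref{homvstrace} and in building the cocycle from the open-closed map). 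Invoking them suggests the bookkeeping that you yourself flag as the main obstacle has not been carried out, and it is precisely that bookkeeping which constitutes the proof.
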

The potential of a cyclic $A_{\infty}$-algebra is defined as follows.
\begin{definition}\label{inftycyclicpotential}
If $\phi\colon\mathcal{A}_{\Delta}\to\mathcal{A}^{\vee}$ is an $\infty$-inner product, then the $\infty$-cyclic potential $\Phi'\colon F_{>0}\mathcal{A}\to R$ is a function on the set of elements of $\mathcal{A}$ of positive valuation defined by
\begin{align}
\Phi'(x)\coloneqq\sum_{N=0}^{\infty}\sum_{p+q+k = N}\frac{1}{N+1}\phi_{p,q}(x^{\otimes p}\otimes\underline{\mathfrak{m}_k(x^{\otimes k})}\otimes x^{\otimes q})(x). \label{potential}
\end{align}
\end{definition}
The sum in~\eqref{potential} converges since $\mathcal{A}$ is gapped and $b$ has positive valuation. 

Although it is not strictly necessary (cf. Theorem~\ref{truemain}), we can use Lemma~\ref{weakcyclic} to show that $\Phi'$ respects gauge-equivalence classes of bounding cochains. Recall that~\cite[Chapter 4.2]{FOOO} constructs, for any $A_{\infty}$-algebra $\mathcal{A}$ over a field of characteristic $0$, a model for the cylinder over $\mathcal{A}$ denoted $Poly([0,1],\mathcal{A})$ whose elements are pairs of formal polynomials in the Novikov variable $T$ with coefficients that are functions on the interval $[0,1]$. A consequence (cf.~\cite[Proposition 4.3.5]{FOOO} and~\cite[Lemma 4.3.7]{FOOO}) of this construction is that a pair of bounding cochains $b_0,b_1\in\widehat{\mathcal{M}}(\mathcal{A})$ are gauge-equivalent if and only if there is a path of elements
\[ b_t = \sum_i b_i(t) T^{\lambda_i}\in\mathcal{A} \]
with $\lim\lambda_i = \infty$ such that
\begin{itemize}
\item[•] $b_i(t)$ is a polynomial in the variable $t$; and
\item[•] for each fixed $t$, the element $b_t\in\mathcal{A}$ is a bounding cochain.
\end{itemize}
\begin{theorem}
For any gauge-equivalent bounding cochains $b_0,b_1\in\widehat{\mathcal{M}}(\mathcal{A})$, one has that $\Phi'(b_0) = \Phi'(b_1)$.
\end{theorem}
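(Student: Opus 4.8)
The plan is to show that $\Phi'$ is constant along polynomial paths of bounding cochains, which by the characterization of gauge equivalence recalled above is enough. So fix a path $b_t = \sum_i b_i(t)T^{\lambda_i}$ with each $b_i(t)$ a polynomial in $t$, with $\lim_i\lambda_i = \infty$, with $b_t\in\widehat{\mathcal{M}}(\mathcal{A})$ for every $t\in[0,1]$, and with endpoints $b_0,b_1$; throughout, $\phi$ is taken to be skew-symmetric and closed (for instance, as produced by Lemma~\ref{cocycletohom}), so that Lemma~\ref{weakcyclic} is available. Since $\mathcal{A}$ is gapped and $\val(b_t)>0$, for each fixed monomial in the generators $T,e,s,t_0,\ldots,t_N$ of $R$ only finitely many summands of~\eqref{potential} contribute, and each contributing summand is a polynomial in $t$ (the $b_i(t)$ are, and $\mathfrak{m}_k$ and $\phi_{p,q}$ are $R$-multilinear). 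Hence $\Phi'(b_t)$ is, coefficient by coefficient, a polynomial in $t$, and it suffices to prove $\tfrac{d}{dt}\Phi'(b_t) = 0$.

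To differentiate, set $y\coloneqq\dot b_t$ and note $|y| = 1$, so that every Koszul sign crossing a copy of $b_t$ or of $y$ is trivial. In the summand of~\eqref{potential} indexed by $(p,q,k)$ with $p+q+k = N$ there are exactly $N+1$ occurrences of $b_t$, so the Leibniz rule produces $N+1$ terms, one for each slot into which $y$ is inserted: $p$ terms with $y$ in the left tensor factor $\mathcal{A}^{\otimes p}$, $k$ terms with $y$ among the arguments of $\mathfrak{m}_k$, $q$ terms with $y$ in the right tensor factor $\mathcal{A}^{\otimes q}$, and one term with $y$ in the evaluation slot. Summing the first three groups of terms over all $(p,q,k)$ with $p+q+k = N$ reproduces precisely the right-hand side of the identity in Lemma~\ref{weakcyclic} applied with this $b_t$ and this $y$; by that identity this equals the left-hand side $N\,S_N$, where
\[ S_N \coloneqq \sum_{p+q+k = N}\phi_{p,q}\bigl(b_t^{\otimes p}\otimes\underline{\mathfrak{m}_k(b_t^{\otimes k})}\otimes b_t^{\otimes q}\bigr)(y). \]
Summing the evaluation-slot terms over $p+q+k = N$ gives exactly $S_N$. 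Therefore
\[ \frac{d}{dt}\Phi'(b_t) \;=\; \sum_{N=0}^{\infty}\frac{1}{N+1}\bigl(N\,S_N + S_N\bigr) \;=\; \sum_{N=0}^{\infty}S_N. \]

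Finally I would reindex this remaining sum by the triple $(p,q,k)$ and pull the sum over $k$ into the distinguished bimodule slot --- legitimate because $\mathcal{A}$ is gapped --- to obtain
\[ \sum_{N=0}^{\infty}S_N \;=\; \sum_{p,q\geq0}\phi_{p,q}\Bigl(b_t^{\otimes p}\otimes\underline{{\textstyle\sum_{k\geq0}}\mathfrak{m}_k(b_t^{\otimes k})}\otimes b_t^{\otimes q}\Bigr)(y) \;=\; \sum_{p,q\geq0}\phi_{p,q}\bigl(b_t^{\otimes p}\otimes\underline{\mathfrak{m}_0^{b_t}}\otimes b_t^{\otimes q}\bigr)(y), \]
and this vanishes because $b_t$ is an honest (not merely weak) bounding cochain, so $\mathfrak{m}_0^{b_t} = 0$. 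Hence $\Phi'(b_t)$ is independent of $t$, and $\Phi'(b_0) = \Phi'(b_1)$.

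The only real content is the combinatorics of the second step: matching the $N+1$ Leibniz terms against the three sums of Lemma~\ref{weakcyclic} and carrying the coefficient $\tfrac{1}{N+1}$ so that $N\,S_N + S_N$ collapses to $(N+1)\,S_N$. The point that most needs care is analytic rather than algebraic --- verifying that the gapped and filtered structure genuinely makes $\Phi'(b_t)$ coefficient-wise polynomial in $t$, so that differentiating term by term and rearranging the resulting series are both valid; this is also where one uses that $\val(b_t)>0$ uniformly in $t$ and that $\mathfrak{m}_{0,(0,0)} = 0$.
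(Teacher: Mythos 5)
Your proposal is correct and follows essentially the same route as the paper: differentiate $\Phi'(b_t)$ along a polynomial path realizing the gauge equivalence, match the Leibniz terms where $\dot b_t$ lands in the $p$-, $k$-, and $q$-slots against the right-hand side of Lemma~\ref{weakcyclic} to collapse the coefficient $\tfrac{1}{N+1}(N+1)$, and then kill the remaining sum with the Maurer--Cartan equation $\mathfrak{m}_0^{b_t}=0$. Your additional remarks on coefficient-wise polynomiality in $t$ (via gappedness and $\val(b_t)>0$) make explicit a convergence point the paper leaves implicit.
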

\begin{proof}
Choosing a path $b_t$ as above, we compute
\begin{align*}
\frac{d}{dt}\Phi'(b_t) &= \sum_{N=0}^{\infty}\sum_{p+q+k = N}\frac{1}{N+1}\phi_{p,q}\left(b_t^{\otimes p}\otimes\underline{\mathfrak{m}_k(b_t^{\otimes k})}\otimes b_t^{\otimes q}\right)\left(\frac{db_t}{dt}\right) \\
&+\sum_{N=0}^{\infty}\sum_{\substack{p+q+k = N \\ r+s = k-1}}\frac{1}{N+1}\phi_{p,q}\left(b_t^{\otimes p}\otimes\underline{\mathfrak{m}_k\left(b_t^{\otimes r}\otimes\frac{db_t}{dt}\otimes b_t{\otimes s}\right)}\otimes b_t^{\otimes q}\right)(b_t) \\
&+\sum_{N=0}^{\infty}\sum_{\substack{p+q+k = N \\ r+s = p-1}}\frac{1}{N+1}\phi_{p,q}\left(b_t^{\otimes r}\otimes\frac{db_t}{dt}\otimes b_t^{\otimes s}\otimes\underline{\mathfrak{m}_k(b_t^{\otimes k})}\otimes b_t^{\otimes q}\right)(b_t) \\
&+\sum_{N=0}^{\infty}\sum_{\substack{p+q+k = N \\ r+s = q-1}}\frac{1}{N+1}\phi_{p,q}\left(b_t^{\otimes p}\otimes\underline{\mathfrak{m}_k(b_t^{\otimes k})}\otimes b_t^{\otimes r}\otimes\frac{db_t}{dt}\otimes b_t^{\otimes s}\right)(b_t) \\
&= \sum_{N=0}^{\infty}\sum_{p+q+k = N}\phi_{p,q}\left(b_t^{\otimes p}\otimes\underline{\mathfrak{m}_k(b_t^{\otimes k})}\otimes b_t^{\otimes q}\right)\left(\frac{db_t}{dt}\right) = 0
\end{align*}
where the second equality follows from Lemma~\ref{weakcyclic}, and the last equality follows from the Maurer--Cartan equation.
\end{proof}

\section{Lagrangian Floer theory}\label{lagfloerthy} The purpose of this section is to review the Morse--theoretic model for the Lagrangian Floer cochcain complex. In this discussion, we fix notation for pseudoholomorphic pearly trees that will be helpful when we construct the cyclic open-closed map. We will also explain how to construct $A_{\infty}$-structures on the Morse complex of a cylinder, which we  need to study the invariance of the open Gromov--Witten potential.

\subsection{Pseudoholomorphic pearly trees}\label{plainfloer}
Suppose that $M = (M^{2n},\omega)$ is a closed connected symplectic manifold. Let $\mathcal{J}(M)$ denote the space of $\omega$-tame almost compatible structures on $M$, and let $J\in\mathcal{J}(M)$. For the rest of this section, fix a closed connected Lagrangian embedding $L\subset M$, where $L$ is equipped with a spin structure $\mathfrak{s}$ and a $GL(1,\Bbbk)$ local system. Additionally, choose Morse--Smale pairs $(f_L,g_L)$ and $(f_M,g_M)$ on $L$ and $M$, respectively. The sets of critical points of $f_L$ and $f_M$ are denoted $\crit(f_L)$ and $\crit(f_M)$.
\begin{assumption}\label{uniqueminmax}
The Morse functions $f_L$ and $f_M$ both have a unique local minimum and a unique local maximum.
\end{assumption}
We can define the Morse cochain complexes $(CM^*(L;R),d)$ and $(CM^*(M;Q),d)$ in terms of these Morse--Smale pairs, with coefficients in $R$ and $Q$, respectively, whose differentials count isolated gradient flow lines joining two critical points.

Given a class $\beta\in H_2(M,L;\mathbb{Z})$ and nonnegative integers $k,\ell\geq0$, consider the (uncompactified) moduli space $\mathcal{M}_{k+1,\ell}(L;\beta)$ of all $J$-holomorphic disks $u\colon(D^2,\partial D^2)\to(M,L)$ with $k+1$ cyclically ordered boundary marked points $z_0,\ldots,z_k$ and $\ell$ ordered interior marked points $w_1,\ldots,w_{\ell}$. Similarly, for $\beta\in H_2(M;\mathbb{Z})$, let $\mathcal{M}_{\ell}(\beta)$ denote the (uncompactified) moduli space of $J$-holomorphic spheres in $M$ with $\ell$ marked points denoted $w_1,\ldots,w_{\ell}$. Define the boundary evaluation maps
\begin{align*}
\evb_j^{\beta}\colon\mathcal{M}_{k+1,\ell}(L;\beta)&\to L\\
\evb_j^{\beta}(u) &= u(z_j)
\end{align*}
for $j = 0,\ldots,k$. Similarly, define the interior evaluation maps by
\begin{align*}
\evi_j^{\beta}\colon\mathcal{M}_{k+1,\ell}(L;\beta)&\to M \\
\evi_j^{\beta}(u) &= u(w_j)
\end{align*}
There are also interior evaluation maps $\evi^{\beta}_j\colon\mathcal{M}_{\ell}(\beta)\to M$ defined similarly.

The $A_{\infty}$-operations on $CM^*(L;R)$ will count configurations consisting of trees of elements in the moduli spaces $\mathcal{M}_{k+1,\ell}(\beta)$ joined by gradient flow lines of (perturbations of) $f_L$. The combinatorial structures underlying such configurations are described by oriented metric ribbon trees whose vertices are partitioned depending on whether they parametrize sphere or disk components of a pearly tree.
\begin{definition}\label{bicoloredtree}
A \textit{bicolored} tree is a tree $T$ with vertex set $V(T)$ and edge set $E(T)$, together with a partition of its vertices
\[ V(T) = V_{\circ}(T)\sqcup V_{\bullet}(T) \]
called the disk vertices and sphere vertices of $T$, respectively. We require that $T$ come equipped with a choice of a subtree $T_{\circ}$ whose vertex set $V(T_{\circ})$ coincides with $V_{\circ}(T)$. Let $E_{\circ}(T)\coloneqq E(T_{\circ})$ and $E_{\bullet}(T)\coloneqq E(T)\setminus E_{\circ}(T)$. Finally, let $e_{0}^{\circ},\ldots,e_{k}^{\circ}$ denote the (combinatorially) semi-infinite edges contained in $T_{\circ}$, and let $e_{1}^{\bullet},\ldots,e_{\ell}^{\bullet}$ denote the remaining semi-infinite edges.
\end{definition}
In the above we also allow the exceptional case of a tree $T$ with $V(T) = \emptyset$ consisting of a single infinite edge.
\begin{definition}
An \textit{oriented metric ribbon tree} consists of a bicolored tree $T$, equipped with
\begin{itemize}
\item[•] a ribbon structure on $T_{\circ}$; i.e. a cyclic ordering of all edges in $E_{\circ}(T)$ adjacent to any vertex in $T_{\circ}$ (which induces a cyclic ordering of $e_{0}^{\circ},\ldots,e_{k}^{\circ}$) and an ordering of the all edges in $E_{\bullet}(T)$ adjacent to any vertex of $T$;
\item[•] a metric on $T$, which is described by a length function $\lambda\colon E(T)\to\mathbb{R}_{\geq0}$;
\item[•] an orientation on $T$ determined by orienting $e^{\circ}_0$ so that it is an outgoing edge, and orienting all remaining edges of $T$ so that they point toward $e^{\circ}_0$;
\item[•] a class $\beta_v\in H_2(M,L)$ for each $v\in V_{\circ}(T)$ and a class $\beta_v\in H_2(M)$ for each $v\in V_{\bullet}(T)$.
\end{itemize}
\end{definition}
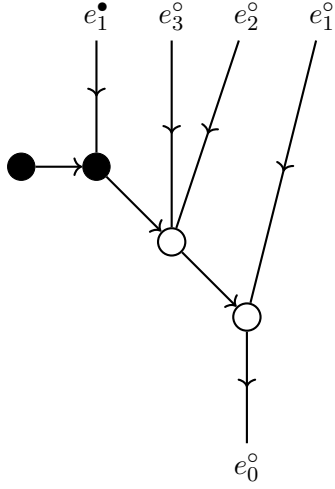
\begin{figure}
\begin{tikzpicture}
\begin{scope}[every node/.style={circle,thick,draw}]
	\node (C) at (2,-1) {};
	\node (D) at (3,-2) {};
\end{scope}

\begin{scope}[every node/.style={circle,draw,fill = black}]
	\node (A) at (0,0) {};
	\node (B) at (1,0) {};
\end{scope}

\node (E) at (1,2) {$e_1^{\bullet}$};
\node (F) at (2,2) {$e_3^{\circ}$};
\node (G) at (3,2) {$e_2^{\circ}$};
\node (H) at (4,2) {$e_1^{\circ}$};
\node (I) at (3,-4) {$e_0^{\circ}$};
\path[->, thick] (C) edge node {} (D);
\path[->, thick] (A) edge node {} (B);
\path[->, thick] (B) edge node {} (C);

\begin{scope}[very thick,decoration={
	markings,
	mark=at position 0.5 with {\arrow{>}}}]

\draw[postaction={decorate}, thick] (E) -- (B);
\draw[postaction={decorate}, thick] (F) -- (C);
\draw[postaction={decorate}, thick] (G) -- (C);
\draw[postaction={decorate}, thick] (H) -- (D);
\draw[postaction={decorate}, thick] (D) -- (I);

\end{scope}
\end{tikzpicture}\caption{An oriented metric ribbon tree.}\label{ribbontreepic}
\end{figure}
For any $v\in V(T)$, let $\val_{\circ}(v)$ denote the number of edges in $E_{\circ}(T)$ adjacent to $v$ and $\val_{\bullet}(v)$ denote the number of edges in $E_{\bullet}(T)$ adjacent to $v$. 
\begin{definition}
We say that $T$ is stable if for each $v\in V(T)$ for which $\omega(\beta_v)=0$, either
\begin{itemize}
\item[•] $v\in V_{\circ}(T)$ and $\val_{\circ}(v)+2\cdot\val_{\bullet}(v)\geq3$; or
\item[•] $v\in V_{\bullet}(T)$ and $\val_{\bullet}(v)\geq3$.
\end{itemize}
\end{definition}
From now on, we will only ever consider moduli spaces of stable trees, both in this construction and in all others like it. There is a moduli space of stable oriented metric ribbon trees, which can be compactified by allowing the length an edge to go to infinity and break. Given two oriented metric ribbon trees $T_1$ and $T_2$, we can attach endpoints to the edge $e^{\circ}_0$ of $T_1$ and to the edge $e^{\circ}_i$, for some $i>0$, of $T_2$, and glue the two endpoints together to form a new tree. Since we have glued the output edge of $T_1$ to an input edge of $T_2$, the glued tree carries a ribbon structure and orientation.

We associate to each vertex of $T$ the moduli space $\mathcal{M}_{\val_{\circ}(v)-1,\val_{\bullet}(v)}(\beta_v)$ if $v\in E_{\circ}(T)$, or $\mathcal{M}_{\val_{\bullet}(v)}(\beta_v)$ if $v\in E_{\bullet}(T)$. For brevity, we write $\mathcal{M}(\beta_v)$ for either of these moduli spaces. Let $E^f_{\circ/\bullet}(T)$ denote the sets of combinatorially finite edges of $E_{\circ/\bullet}(T)$. We will construct an evaluation map 
\begin{align*}
\ev_{T^f}\colon\prod_{v\in V(T)}\mathcal{M}(\beta_v)\to\prod_{e\in E_{\circ}^f(T)}(L\times L)\times\prod_{e\in E_{\bullet}^f(T)}(M\times M)
\end{align*}
using the bicoloring of $T$.

If $e\in E(T)$ is a combinatorially finite edge, let $s(e),t(e)\in V(T)$ denote the source and target of $e$, respectively. When $e\in E_{\circ}(T)$, there is an integer $k_t$ such that $e$ comes $k_t$th in the cyclic ordering of edges adjacent to $t(e)$. Our orientation conventions imply that $e$ is always the zeroth edge of $s(e)$. Similarly, when $e\in E_{\bullet}(T)$, there are integers $k_s$ and $k_t$ which are defined analogously using the orderings of the edges in $E_{\bullet}(T)$.

Let $\vec{u} = (u_v)_{v\in V(T)}$ denote an element of $\prod_{v\in V(T)}\mathcal{M}(\beta_v)$. For $e\in E_{\circ}(T)$, define
\begin{align*}
&\ev_e\colon\prod_{v\in V(T)}\mathcal{M}(\beta_v)\to L\times L \\
&\ev_e(\vec{u}) = (\evb_0(u_{s(e)}),\evb_{k_t}(u_{t(e)})).
\end{align*}
For $e\in E_{\bullet}(T)$, define
\begin{align*}
&\ev_e\colon\prod_{v\in V(T)}\mathcal{M}(\beta_v)\to M\times M \\
&\ev_e(\vec{u}) = (\evi_{k_s}(u_{s(e)}),\evi_{k_t}(u_{t(e)})).
\end{align*}
Finally, set
\begin{align*}
\ev_{T^f}(\vec{u}) = \prod_{e\in E_{\circ}^f(T)}\ev_e(\vec{u})\times\prod_{e\in E_{\bullet}^f(T)}\ev_e(\vec{u}).
\end{align*}
We extend this to a full evaluation map for $T$ by taking into account the semi-infinite edges. According to our orientation conventions, the edge $e_0^{\circ}$ has one endpoint denoted $s(e_0^{\circ})$, and all other semi-infinite edges $e_j^{\circ/\bullet}$, where $j = 	1,\ldots,k$ or $j = 1,\ldots,\ell$, have one endpoint denoted $t(e_j^{\circ/\bullet})$. With this understood, define the evaluation maps $\ev_j^{\circ/\bullet}$ to be the evaluation map determined by the position of $e_j^{\circ/\bullet}$ in the ordering of edges adjacent to its endpoint. We can now associate to $T$ an evaluation map
\begin{align}
\ev_T(\vec{u}) = \prod_{j=1}^{\ell}\ev_{j}^{\bullet}(u_{t(e_j^{\bullet})})\times\prod_{j=1}^k\ev_j^{\circ}(u_{t(e_j^{\circ})})\times\ev_{T^f}(\vec{u})\times \ev_0^{\circ}(u_{s(e_0^{\circ})}). \label{evtree}
\end{align}
Having defined $\ev_T$, we will define the moduli spaces of pearly trees by pulling back a submanifold in the codomain of this map. We must also assign to each edge of $T$ a Morse function of the following type.
\begin{definition}
Fix a Morse function $f_0\colon Y\to\mathbb{R}$ on a compact manifold $Y$. We say that a Morse function $f\colon Y\to\mathbb{R}$ is \textit{$f_0$-admissible} if it is a $C^2$-small perturbation of $f_0$ for which $\crit(f_0) = \crit(f)$ and which agrees with $f_0$ in a neighborhood of its critical points.
\end{definition}
For each $e\in E_{\circ}(T)$, choose an $f_L$-admissible Morse function $f_{L,e}$ and for each $e\in E_{\bullet}(T)$, choose an $f_M$-admissible Morse function $f_{M,e}$. Given a combinatorially finite edge $e\in E_{\circ}^f(T)$, let $\phi_t^{f_{L,e}}$ denote the time-$t$ gradient flow of $f_{L,e}$, and for $e\in E_{\bullet}^f(T)$, let $\phi_t^{f_{M,e}}$ denote the time-$t$ gradient flow of $f_{M,e}$. These yield embeddings
\begin{align}
(L\setminus\crit(f_L))\times\mathbb{R}_{\geq0}&\hookrightarrow L\times L \nonumber \\
(x,t)&\mapsto(x,\phi_t^{f_{L,e}}(x)) \label{graphcirc}
\end{align}
and
\begin{align}
(M\setminus\crit(f_M))\times\mathbb{R}_{\geq0}&\hookrightarrow M\times M \nonumber \\
(x,t)&\mapsto(x,\phi_t^{f_{M,e}}(x)) \label{graphbullet}
\end{align}
whose images are denoted $G_e$. Given $x\in\crit(f_L)$, let $W^u(x)$ and $W^s(x)$ denote its unstable and stable manifolds, and define $W^u(y)$ and $W^s(y)$ for $y\in\crit(f_M)$ analogously.
\begin{definition}
Let $x = (x_1,\ldots,x_k)$ be a sequence of inputs in $\crit(f_L)$ and $y = (y_1,\ldots,y_{\ell})$ be a sequence of inputs in $\crit(f_M)$, and let $x_0\in\crit(f_L)$ denote an output critical point. Define the \textit{moduli space of pearly trees in the class} $\beta\in H_2(M,L)$ with these inputs and output to be
\begin{align*}
\mathcal{M}(x_0,x;y;\beta)\coloneqq\coprod_T\ev_T^{-1}\left(\prod_{j=1}^{\ell}W^u(y_j)\times\prod_{j=1}^k W^u(x_j)\times\prod_{e\in E^f_{\circ}(T)\sqcup E^f_{\bullet}(T)}G_e\times W^s(x_0)\right)
\end{align*}
where the disjoint union is taken over the set of all oriented metric ribbon trees $T$ with $\sum_{v\in V(T)}\beta_v = \beta$. This admits a natural Gromov compactification
\begin{align}
\overline{\mathcal{M}}(x_0,x;y;\beta).\label{pearls}
\end{align}
\end{definition}
\begin{figure}
\begin{tikzpicture}
\begin{scope}[very thick,decoration={
	markings,
	mark=at position 0.5 with {\arrow{>}}}]

\begin{scope}[scale = 0.5]
\draw[postaction = {decorate}] (2,0) -- (5,-3);
\draw[postaction = {decorate}] (5,3) -- (5,-3);
\draw[postaction = {decorate}, rounded corners] (8,3) -- (8,0) -- (5,-3);
\draw[postaction = {decorate}] (5,-3) -- (8,-6);
\draw[postaction = {decorate}, rounded corners] (11,3) -- (11,-3) -- (8,-6);
\draw[postaction = {decorate}] (8,-6) -- (8,-9);
\draw[postaction = {decorate}] (2,3) -- (2,0);
\draw[fill = gray!40] (5,-3) circle (0.75cm);
\draw[fill = gray!40] (8,-6) circle (0.75cm);

\node[anchor = south] at (2,3) {$y_1$};
\node[anchor = south] at (5,3) {$x_3$};
\node[anchor = south] at (8,3) {$x_2$};
\node[anchor = south] at (11,3) {$x_1$};
\node[anchor = north] at (8,-9) {$x_0$};

\begin{scope}
\shade[ball color = gray!40, opacity = 0.4] (0,0) circle (1cm);
\draw (0,0) circle (1cm);
\draw (-1,0) arc (180:360:1 and 0.3);
\draw[dashed] (1,0) arc (0:180:1 and 0.3);
\end{scope}

\begin{scope}[xshift = 2cm]
\draw[fill = white] (0,0) circle (1cm);
\shade[ball color = gray!40, opacity = 0.4] (0,0) circle (1cm);
\draw (0,0) circle (1cm);
\draw (-1,0) arc (180:360:1 and 0.3);
\draw[dashed] (1,0) arc (0:180:1 and 0.3);
\end{scope}
\end{scope}
\end{scope}
\end{tikzpicture}\caption{A pseudoholomorphic pearly tree with underlying oriented metric ribbon tree depicted in Figure~\ref{ribbontreepic}.}
\end{figure}
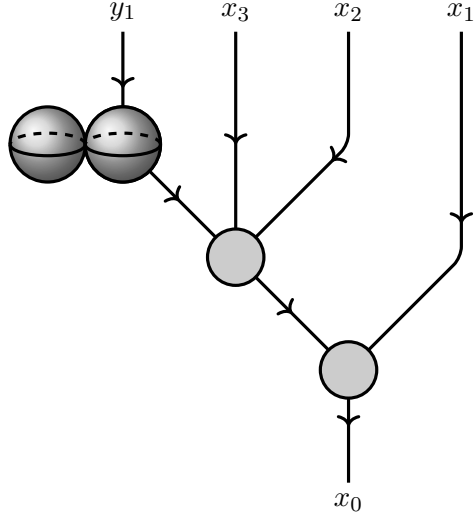

\begin{remark}
Our definitions differ from those in~\cite{CW22} or~\cite{VWX20} in that we have not used domain-dependent almost complex structures. This slightly simplifies the construction of a strict unit and later of the cyclic open-closed map.
\end{remark}
We assume that $J$ can be chosen so that these moduli spaces are regular.
\begin{assumption}\label{regularity}
There exists $J\in\mathcal{J}(M)$ such that all of the moduli spaces~\eqref{pearls} of virtual dimension at most $1$ are compact orbifolds with boundary of the expected dimension.
\end{assumption}
Since we have assumed that $L$ is spin, the moduli spaces~\eqref{pearls} are oriented.
\begin{definition}
Given sequences $x$ and $y$ as above, define
\[ \mathfrak{q}_{k,\ell}^{\beta}(x_1,\ldots,x_k;y_1,\ldots,y_{\ell}) = \sum_{x_0\in\crit(f_L)}\hol_{\nabla}(\beta)\#|\mathcal{M}(x_0,x;y;\beta)|\cdot x_0\]
where $\#|\mathcal{M}(x_0,x;y;\beta)|$ is the signed count of elements in the zero-dimensional moduli space $\mathcal{M}(x_0,x;y;\beta)$.

These extend linearly to arbitrary inputs in $CM^*(L;R)$ and $CM^*(M;Q)$. Define operations
\begin{align*}
&\mathfrak{q}_{k,\ell}\colon CM^*(L;R)^{\otimes k}\otimes CM^*(M:Q)^{\otimes\ell}\to CM^*(L;R) \\
&\mathfrak{q}_{k,\ell} = \sum_{\beta\in H_2(M,L;\mathbb{Z})} e^{\mu(\beta)/2} T^{\omega(\beta)}\mathfrak{q}^{\beta}_{k,\ell}.
\end{align*}
For $\gamma\in CM^*(L;R)$ with $d\gamma = 0$ and $|\gamma| = 2$, where $d$ denotes the differential on the Morse cochain complex, define the bulk-deformed operators
\begin{align*}
\mathfrak{m}_{k}^{\gamma}(x) = \sum_{\ell\geq0}\frac{1}{\ell!}\mathfrak{q}_{k,\ell}(x;\gamma^{\otimes\ell}).
\end{align*}
\end{definition}
We call such a class $\gamma$ a \textit{bulk parameter}. The next lemma follows from Assumption~\ref{regularity}, by standard arguments.
\begin{lemma}
For any $\gamma\in CM^*(M)$ with $|\gamma| = 2$ and $d\gamma=0$, the pair
\[ (CM^*(L),\lbrace\mathfrak{m}_{k}^{\gamma}\rbrace_{k=0}^{\infty})\]
is a strictly unital gapped filtered $A_{\infty}$-algebra. The unit element is given by the unique minimum of $f_L$. \qed
\end{lemma}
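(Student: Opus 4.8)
The plan is to verify the curved $A_\infty$-relations~\eqref{ainftyrels} for the operations $\mathfrak{m}_k^\gamma$ by a boundary-degeneration analysis of the one-dimensional moduli spaces $\overline{\mathcal{M}}(x_0,x;y;\beta)$ from~\eqref{pearls}, exactly as in the standard constructions of pearly $A_\infty$-structures (Biran--Cornea~\cite{BC07}, and with interior marked points and bulk as in~\cite{CW22},~\cite{VWX20},~\cite{ST21}). Concretely, for fixed inputs and output with $\sum_v \beta_v = \beta$ and virtual dimension $1$, Assumption~\ref{regularity} tells us $\overline{\mathcal{M}}(x_0,x;y;\beta)$ is a compact $1$-orbifold with boundary, so its signed boundary count is zero; the $A_\infty$-relation is precisely the statement that this signed boundary count, reorganized combinatorially, vanishes. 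The first step is therefore to enumerate the codimension-one boundary strata of $\overline{\mathcal{M}}(x_0,x;y;\beta)$: these come from (i) an interior edge length $\lambda(e)$ of the ribbon tree going to $0$ (two pearly components collide, producing a disk with more marked points — but this is an interior, not boundary, phenomenon once one notes the gluing is an isomorphism onto a codimension-zero piece, so it does not contribute) versus $\lambda(e)\to\infty$ (a Morse flow line breaks at a critical point $x_0'$), which glues two pearly tree moduli spaces along an evaluation at $x_0'$ and yields the "outer composition" terms $\mathfrak{m}_{k-\ell+1}^\gamma(\ldots\otimes\mathfrak{m}_\ell^\gamma(\ldots)\otimes\ldots)$; and (ii) genuine Gromov degenerations of a disk or sphere component — disk bubbling, sphere bubbling, and sphere-from-disk bubbling. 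I would then invoke the standard fact (using Assumption~\ref{uniqueminmax} and the transversality in Assumption~\ref{regularity}) that sphere bubbles either recombine in cancelling pairs or are ruled out on dimension grounds in the relevant strata, and that the remaining disk-degeneration strata are exactly indexed by the terms in~\eqref{ainftyrels}. Summing $1/\ell!$-weighted over the number $\ell$ of interior bulk insertions (the combinatorial factor working out as usual because bulk insertions are unordered and the degeneration distributes them), and over $\beta$ with the Novikov and Maslov bookkeeping $e^{\mu(\beta)/2}T^{\omega(\beta)}$, gives $\sum_{i,\ell}(-1)^{\maltese_i}\mathfrak{m}^\gamma_{k-\ell+1}(\cdots\mathfrak{m}^\gamma_\ell(\cdots)\cdots) = 0$.

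The second step is the sign verification: one must check that the orientations on $\overline{\mathcal{M}}(x_0,x;y;\beta)$ induced by the spin structure $\mathfrak{s}$ and the Morse-theoretic orientations of the $W^u,W^s$, when restricted to a boundary stratum, differ from the product orientation on the corresponding fiber product of lower pearly moduli spaces precisely by the Koszul sign $(-1)^{\maltese_i}$ of~\eqref{cross}. This is bookkeeping identical to~\cite[\S 8]{FOOO} or~\cite{ST21} adapted to the ribbon-tree model; I would cite it rather than redo it, noting only that our sign convention~\eqref{cross} is the one compatible with the right action of $\mathcal{A}$ on itself, as flagged in the sign-conventions remark.

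The third, essentially formal, step is strict unitality: the claim that the unit is the unique minimum $x_{\min}$ of $f_L$. That $\mathfrak{q}_{k,\ell}(\ldots,x_{\min},\ldots)$ vanishes for $(k,\ell)\neq(2,0)$ and similar follows because $W^u(x_{\min}) = L$ and inserting the fundamental cycle at a boundary marked point makes the corresponding boundary evaluation constraint trivial, so the moduli space acquires an extra $S^1$ (or $\mathbb{R}$) symmetry and has no rigid points — unless it is the constant-disk/Morse-trajectory configuration giving $\mathfrak{m}_2^\gamma(x_{\min},x) = x = (-1)^{|x|}\mathfrak{m}_2^\gamma(x,x_{\min})$, where the sign is the Koszul sign from the convention above. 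The gapped-filtered property is immediate from Gromov compactness: the set of $\omega(\beta)$ with $\mathcal{M}_{k+1,\ell}(L;\beta)\neq\emptyset$ and Maslov index in a bounded range is finite, and $\mathfrak{m}_{0,(0,0)} = 0$ since a constant disk is unstable with no inputs, giving a monoid $G\subset\mathbb{R}_{\geq 0}\times 2\mathbb{Z}$ with the required discreteness and finiteness.

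\emph{Main obstacle.} The genuinely delicate point is the treatment of the boundary strata involving closed sphere bubbles that touch $L$: one must argue these do not contribute to the $A_\infty$-relations at this stage (they are instead responsible for the wall-crossing term in Theorem~\ref{truemain}). Under Assumption~\ref{regularity} this is a dimension count — a sphere component carrying a single $L$-incidence point forces the configuration into codimension $\geq 2$ generically — together with the observation that any sphere bubble appearing in codimension one appears on a component already constrained so that it cancels in pairs under the reflection of marked-point positions, exactly the mechanism in~\cite[\S 3]{F10} and~\cite{ST21}. Packaging this cleanly, rather than the routine sign- and combinatorics-chasing, is where the care is needed; but since the lemma is asserted "by standard arguments," I would present it as an application of Assumption~\ref{regularity} with references to~\cite{BC07},~\cite{CW22},~\cite{ST21} for the details.
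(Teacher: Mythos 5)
Your proposal is correct and follows the same route the paper intends: the paper gives no proof of this lemma, stating only that it ``follows from Assumption~\ref{regularity}, by standard arguments'' and deferring the unit to~\cite[Lemma 5.2.4]{BC07}, which is precisely the Biran--Cornea-style boundary analysis of one-dimensional pearly moduli spaces that you sketch. One expository slip worth fixing: having correctly attributed the composition terms of~\eqref{ainftyrels} to the $\lambda(e)\to\infty$ Morse-breaking strata, you should not also say the disk-degeneration strata ``index the terms'' of the relation --- those strata instead glue to the $\lambda(e)\to 0$ faces to form interior points of the compactified one-manifold, exactly as your own parenthetical observes.
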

For a discussion of the existence of a unit, see the proof of~\cite[Lemma 5.2.4]{BC07}.
\subsection{Models for cylinder objects}\label{cylinderobjects}
In~\cite{F11} and~\cite{ST21}, the behavior of the open Gromov--Witten potential as the almost complex structure on $M$ varies is understood in terms of pseudo-isotopies, which can be viewed as $A_{\infty}$-structures on the space of differentials forms on $L\times[-1,1]$. In this subsection, we develop the analogous notion in the pearly model.

Suppose that we are given two almost complex structures $J_{\pm 1}\in\mathcal{J}(M)$, two Morse--Smale pairs $(f_L^{\pm 1},g_L^{\pm 1})$ on $L$, and two Morse--Smale pairs on $(f_M^{\pm 1},g_M^{\pm 1})$ on $M$, both of which satisfy Assumption~\ref{regularity}. Consider two smooth functions 
\begin{align*}
&F_L\colon L\times(-1-\epsilon,1+\epsilon)\to\mathbb{R}\\ &F_M\colon M\times(-1-\epsilon,1+\epsilon)\to\mathbb{R}
\end{align*}
where, if $F^t_L(x)\coloneqq F_L(x,t)$ and $F^t_M(x)\coloneqq F_M(x,t)$, then for some $\epsilon>0$, we have that
\begin{align*}
F_L^t = \begin{cases}
f^{-1}_L \, , & t\in(-1-\epsilon,-1+\epsilon] \\
f^1_L \, , & t\in[1-\epsilon,1+\epsilon)
\end{cases}
\end{align*}
and similarly for $F_M$. We can also assume without loss of generality that $F_L^t$ and $F_M^t$ are independent of $t$ and are Morse functions $f^0_L\colon L\to\mathbb{R}$ and $f^0_M\colon M\to\mathbb{R}$, respectively, provided that $t\in(-\epsilon,\epsilon)$. We modify $F_L$ and $F_M$ to obtain Morse functions on $L\times[-1,1]$ and $M\times[-1,1]$ by choosing a Morse function $\rho\colon(-1-\epsilon,1+\epsilon)\to\mathbb{R}$ such that
\begin{itemize}
\item $\rho$ has index $0$ critical points at $\pm 1$ and an index $1$ critical point at $0$;
\item $\rho$ is sufficiently increasing on $(-1,0)$ and sufficiently decreasing on $(0,1)$ that
\begin{align*}
\begin{cases}
\frac{\partial F_M}{\partial t}(y,t)+\rho'(t)>0 \text{ and }\frac{\partial F_L}{\partial t}(x,t)+\rho'(t)>0 \, , & t\in(-1,0) \, , \, y\in M \, , \, x\in L \\
\frac{\partial F_M}{\partial t}(y,t)+\rho'(t)<0 \text{ and }\frac{\partial F_L}{\partial t}(x,t)+\rho'(t)<0 \, , & t\in(0,1) \, , \, y\in M \, , \, x\in L
\end{cases}
\end{align*}
\item $F^t_L(x)+\rho(t)$ and $F^t_M(y)+\rho(t)$ are Morse functions on $L$ and $M$ for all $t\in(-\epsilon,\epsilon)$.
\end{itemize}
It follows from these conditions on $\rho$ that the functions
\begin{align*}
\widetilde{f}_L(x,t)\coloneqq F_L(x,t)+\rho(t)\colon L\times(-1-\epsilon,1+\epsilon)\to\mathbb{R} \\
\widetilde{f}_M(y,t)\coloneqq F_M(y,t)+\rho(t)\colon M\times(-1-\epsilon,1+\epsilon)\to\mathbb{R}
\end{align*}
are Morse functions whose critical point sets are
\begin{align*}
\crit(\widetilde{f}_L) &= \crit(f_L^{-1})\times\lbrace-1\rbrace\cup\crit(f_L^0)\times\lbrace 0\rbrace\cup\crit(f_L^1)\times\lbrace 1\rbrace \\
\crit(\widetilde{f}_M) &= \crit(f_M^{-1})\times\lbrace-1\rbrace\cup\crit(f_M^0)\times\lbrace 0\rbrace\cup\crit(f_M^1)\times\lbrace 1\rbrace
\end{align*}
with Morse indices given by
\begin{align*}
\ind_{F_L}(x,\pm1) &= \ind_{f_L^{\pm 1}}(x) \\
\ind_{F_L}(x,0) &= \ind_{f_L^0}(x)+1 \\
\ind_{F_M}(y,\pm1) &= \ind_{f_M^{\pm 1}}(y) \\
\ind_{F_M}(y,0) &= \ind_{f_M^0}(y)+1
\end{align*}
By a partition of unity argument, one can construct a Riemannian metric $\widetilde{g}_L$ on $L\times[-1,1]$ such that
\begin{itemize}
\item[•] the restrictions of $\widetilde{g}_L$ to $L\times[-1,-1+\epsilon)$ and $L\times(1-\epsilon,1]$ agree with the products of the metrics $g_L^{\mp 1}$ with the standard metric on the interval;
\item[•] for all $t\in(-\epsilon,\epsilon)$, the restriction of $\widetilde{g}_L$ to $L\times\lbrace t\rbrace\cong L$ is a Riemannian metric $g_L^0$ on $L$ which is independent of $t$, and $(f^0_L,g^0_L)$ is a Morse--Smale pair;
\item[•] $(\widetilde{f}_L,\widetilde{g}_L)$ is a Morse--Smale pair on $L\times[-1,1]$.
\end{itemize}
We can repeat this construction with $M$ in place of $L$ to obtain a Morse--Smale pair $(\widetilde{f}_M,\widetilde{g}_M)$. Define the Morse cochain complexes $(CF^*(L\times[-1,1]),d)$ and $(CF^*(M\times[-1,1]),d)$ with respect to these Morse--Smale pairs.

To define $A_{\infty}$-operations on $CM^*(L\times[-1,1])$, we consider moduli spaces of disks defined using time-dependent almost complex structures. Let $\underline{J} = \lbrace J_t\rbrace_{t\in[-1,1]}$ be a path in $\mathcal{J}(M)$, with the property that the moduli spaces of pearly trees of virtual dimension at most $1$ defined using the Morse--Smale pairs $(f^i_L,g^i_L)$ and $(f^i_M,g^i_M)$ and the almost complex structures $J_i$, for $i\in\lbrace-1,0,1\rbrace$, satisfy Assumption~\ref{regularity}.

For any $\beta\in H_2(M,L;\mathbb{Z})$ or $\beta\in H_2(M;\mathbb{Z})$, define \textit{time-dependent moduli spaces}
\begin{align*} \widetilde{\mathcal{M}}_{k+1,\ell}(\beta)\coloneqq\lbrace(u,t)\colon u\in\mathcal{M}_{k+1,\ell}(\beta;J_t)\rbrace \\
\widetilde{\mathcal{M}}_{\ell}(\beta)\coloneqq\lbrace(u,t)\colon u\in\mathcal{M}_{\ell}(\beta;J_t)\rbrace
\end{align*}
Let $\evb_{j,t}^{\beta}\colon\mathcal{M}_{k+1,\ell}(\beta;J_t)\to L$ and $\evi_{j,t}^{\beta}\colon\mathcal{M}_{k+1,\ell}(\beta;J_t)\to M$ denote the boundary and interior evaluation maps at time $t$, respectively. The time-dependent moduli spaces carry boundary evaluation maps
\begin{align*}
\widetilde{\evb}_j^{\beta}\colon\widetilde{\mathcal{M}}_{k+1,\ell}(L;\beta)&\to L\times[-1,1]\\
\widetilde{\evb}_j^{\beta}(u,t) &= (\evb_{j,t}^{\beta}(u),t)
\end{align*}
for $j = 0,\ldots,k$. There are similarly defined interior evaluation maps
\begin{align*}
\widetilde{\evi}_j^{\beta}\colon\widetilde{\mathcal{M}}_{k+1,\ell}(L;\beta)&\to M\times[-1,1]\\
\widetilde{\evi}_j^{\beta}(u,t) &= (\evi_{j,t}^{\beta}(u),t) \\
\widetilde{\evi}_j^{\beta}\colon\widetilde{\mathcal{M}}_{\ell}(L;\beta)&\to M\times[-1,1]\\
\widetilde{\evi}_j^{\beta}(u,t) &= (\evi_{j,t}^{\beta}(u),t)
\end{align*}
for all $j = 0,\ldots,\ell$. These let us define analogues of the evaluation maps~\eqref{evtree}, enabling the following definition.
\begin{definition}
Let $\tilde{x} = (\tilde{x}_1,\ldots,\tilde{x}_k)$ be a sequence of inputs in $\crit(\widetilde{f}_L)$ and $\tilde{y} = (\tilde{y}_1,\ldots,\tilde{y}_{\ell})$ be a sequence of inputs in $\crit(\widetilde{f}_M)$, and let $\tilde{x}_0\in\crit(\widetilde{f}_L)$ be an output critical point. The compactified moduli spaces
\[ \overline{\mathcal{M}}(\tilde{x}_0,\tilde{x};\tilde{y};\beta) \label{cylpearls} \]
are defined the same way as the moduli spaces~\eqref{pearls}, except that, given an oriented metric ribbon tree $T$, 
\begin{itemize}
\item[•] the vertices $v\in V_{\circ}(T)$ are assigned elements of $\widetilde{\mathcal{M}}_{k+1,\ell}(\beta)$ and the vertices $v\in V_{\bullet}(T)$ are assigned elements of $\widetilde{\mathcal{M}}_{\ell}(\beta)$; and
\item[•] the edges of $T$ are assigned small generic perturbations of $\widetilde{f}_L$ or $\widetilde{f}_M$ which agree with $\widetilde{f}_L$ and $\widetilde{f}_M$, respectively, near the critical points.
\end{itemize}
\end{definition}
The time-dependent analogue of Assumption~\ref{regularity} is the following.
\begin{assumption}\label{cylregularity}
There is a path $\underline{J}$ joining $J_{-1}$ and $J_1$ such that all moduli spaces~\eqref{cylpearls} of virtual dimension $\leq1$ are compact orbifolds with boundary of the expected dimension.
\end{assumption}
\begin{remark}\label{cylsubmersivity}
In the de Rham or singular chains model for Lagrangian Floer theory, one would require submersivity the boundary evaluation maps on $\widetilde{\mathcal{M}}_{k+1,\ell}(\beta)$ to define $A_{\infty}$-structures on $L\times I$. If this is the case, then regularity of the moduli spaces would implies that all of the moduli spaces $\widetilde{\mathcal{M}}_{k+1,\ell}(\beta;J_t)$ satisfy Assumption~\ref{regularity}. The existence of such paths $\underline{J}$ cannot be established using standard transversality arguments.
\end{remark}
Under this assumption, we define bulk-deformed $A_{\infty}$-structures on $CM^*(L\times[-1,1];R)$ just as we did on $CM^*(L;R)$. Given a cocycle $\widetilde{\gamma}\in CM^*(M\times[-1,1])$, our choice of Morse function on $M\times[-1,1]$ implies that its restrictions to $M\times\lbrace\mp 1\rbrace$ are Morse cocycles  $\gamma^{\mp 1}\in CM^*(M;(f_M^{\mp 1},g_M^{\mp 1}))$ for which $|\gamma^{\mp 1}| = |\gamma|$.
\begin{lemma}
Let $\widetilde{\gamma}\in CM^*(M\times[-1,1])$ be a class with $|\widetilde{\gamma}| = 2$ and $d\widetilde{\gamma}=0$. Then there exist bulk-deformed operators
\[ \widetilde{\mathfrak{m}}_k^{\widetilde{\gamma}}\colon CM^*(L\times[-1,1];R)^{\otimes k}\to CM^*(L\times[-1,1];R)\]
of degree $2-k$ such that $(CM^*(L;R),\lbrace\widetilde{\mathfrak{m}}_k^{\widetilde{\gamma}}\rbrace)$ is a gapped filtered $A_{\infty}$-algebra.

Furthermore, this $A_{\infty}$-algebra has
\begin{align*}
(CM^*(L;(f_L^{\mp 1},g_L^{\mp 1})),\mathfrak{m}^{\gamma^{\mp 1}}_k)
\end{align*}
as $A_{\infty}$-subalgebras. \qed
\end{lemma}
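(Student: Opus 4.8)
The plan is to define $\widetilde{\mathfrak{q}}_{k,\ell}^\beta$, and then $\widetilde{\mathfrak{m}}_k^{\widetilde{\gamma}}\coloneqq\sum_\ell\frac{1}{\ell!}\widetilde{\mathfrak{q}}_{k,\ell}(-;\widetilde{\gamma}^{\otimes\ell})$, by repeating verbatim the construction of $\mathfrak{q}_{k,\ell}^\beta$ and $\mathfrak{m}_k^\gamma$ on $CM^*(L;R)$, with the single change that the vertices of a bicolored tree $T$ are now decorated with elements of the time-dependent moduli spaces $\widetilde{\mathcal{M}}_{k+1,\ell}(\beta)$ and $\widetilde{\mathcal{M}}_\ell(\beta)$, that the maps $\widetilde{\evb}$, $\widetilde{\evi}$ are assembled into the cylinder analogue of $\ev_T$, and that the unstable/stable manifolds and the edge graphs $G_e$ are taken with respect to the Morse--Smale pairs $(\widetilde{f}_L,\widetilde{g}_L)$ and $(\widetilde{f}_M,\widetilde{g}_M)$, the interior constraints being weighted by $\widetilde{\gamma}$. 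That $(CM^*(L\times[-1,1];R),\lbrace\widetilde{\mathfrak{m}}_k^{\widetilde{\gamma}}\rbrace)$ is a strictly unital gapped filtered $A_\infty$-algebra of the asserted degrees, with unit the minimum of $\widetilde{f}_L$, then follows by the same standard arguments that prove the analogous statement for $CM^*(L;R)$: Assumption~\ref{cylregularity} makes every $1$-dimensional moduli space $\overline{\mathcal{M}}(\tilde{x}_0,\tilde{x};\tilde{y};\beta)$ a compact $1$-orbifold whose boundary strata — trajectory breaking along an edge, disk and sphere bubbling recorded by the bicolored combinatorics, and sphere bubbling at an interior marked point, which is killed by $d\widetilde{\gamma}=0$ — assemble into the curved $A_\infty$-relations~\eqref{ainftyrels} with signs dictated by the spin structure; gappedness and filteredness come from Gromov compactness, and $\widetilde{\mathfrak{m}}_{0,(0,0)}=0$ since a constant disk with a single boundary marked point is unstable.

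The content beyond the closed case is the claim about $A_\infty$-subalgebras. Since $\crit(\widetilde{f}_L)=\bigsqcup_{i\in\lbrace-1,0,1\rbrace}\crit(f_L^i)\times\lbrace i\rbrace$ and $\ind_{\widetilde{f}_L}(x,\mp1)=\ind_{f_L^{\mp1}}(x)$, the graded $R$-submodule $CM^*(L;(f_L^{\mp1},g_L^{\mp1}))\subset CM^*(L\times[-1,1];R)$ spanned by the critical points over $t=\mp1$ is degree-preserving and contains the unit of its subcomplex. It therefore suffices to show that, when all inputs $\tilde{x}_1,\dots,\tilde{x}_k$ lie over $t=\mp1$, every pearly tree contributing to $\widetilde{\mathfrak{q}}_{k,\ell}(\tilde{x};\widetilde{\gamma}^{\otimes\ell})$ lies entirely over $t=\mp1$ — each disk or sphere component being $J_{\mp1}$-holomorphic and each Morse trajectory a trajectory of a perturbation of $f_L^{\mp1}$ or $f_M^{\mp1}$ — and that only the summand $\gamma^{\mp1}$ of $\widetilde{\gamma}$ over $t=\mp1$ can contribute. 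Over $L\times(-1-\epsilon,-1+\epsilon]$ and $M\times(-1-\epsilon,-1+\epsilon]$ (and symmetrically near $t=1$) the data are all products: $J_t=J_{\mp1}$ is constant, $F_L^t=f_L^{\mp1}$ and $F_M^t=f_M^{\mp1}$ are constant, $\widetilde{g}_L$ and $\widetilde{g}_M$ are product metrics, and $\rho'(\mp1)=0$, so the hypersurface $\lbrace t=\mp1\rbrace$ is invariant under $\nabla\widetilde{f}_L$ and $\nabla\widetilde{f}_M$; consequently the unstable manifolds of the critical points over $t=\mp1$, and — after choosing the edge perturbations $\widetilde{f}_{L,e},\widetilde{f}_{M,e}$ to remain products near the ends — the edge graphs $G_e$ issuing from them, are contained in $\lbrace t=\mp1\rbrace$. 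An input flow line beginning over $t=\mp1$ thus stays over $t=\mp1$, which forces the adjacent disk vertex over $t=\mp1$, and one propagates this down the tree. Because $\mp1$ is an endpoint of the interval, the unstable manifold of a critical point over $t=0$ or over $t=\pm1\neq\mp1$ meets $\lbrace t=\mp1\rbrace$ only in the finite set of critical points over $t=\mp1$ (the flow lines emanating from it can only limit onto those), which a generic interior constraint misses; hence only classes in $\gamma^{\mp1}$ appear. The configurations that survive are precisely those counted by $\mathfrak{q}_{k,\ell}$ for $(f_L^{\mp1},g_L^{\mp1})$, $(f_M^{\mp1},g_M^{\mp1})$, $J_{\mp1}$ and $\gamma^{\mp1}$, with matching weights, so $\widetilde{\mathfrak{m}}_k^{\widetilde{\gamma}}$ preserves the subcomplex and restricts there to $\mathfrak{m}_k^{\gamma^{\mp1}}$.

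The main obstacle is making the product structure near $t=\pm1$ genuine at the level of all the auxiliary perturbation data — the edge Morse functions, and, if one wishes to work with more general time- or domain-dependent almost complex structures, those as well — simultaneously with the transversality demanded by Assumption~\ref{cylregularity}; once the data are honestly products near the ends, the rest of the subalgebra argument is the soft observation that flow lines and holomorphic curves over $t=\mp1$ cannot reach the interior of the cylinder.
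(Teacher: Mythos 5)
Your proposal is correct and follows the same route the paper intends: the lemma is stated with its proof omitted precisely because the construction in Section~\ref{cylinderobjects} (time-dependent moduli spaces, the Morse functions $\widetilde{f}_L=F_L+\rho$, $\widetilde{f}_M=F_M+\rho$ with product metrics and $\rho'(\mp1)=0$ making the slices $t=\mp1$ flow-invariant) is set up exactly so that the $A_{\infty}$-relations and the subalgebra property follow by the argument you give. Your observation that the edge perturbations must also be taken of product form near $t=\pm1$ is the one point the paper leaves implicit, and you handle it correctly.
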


\section{The cyclic open-closed map}\label{thecyclicocmap}
For the next two sections, let $\mathcal{A} = (CF^*(L),\lbrace\mathfrak{m}_k^{\gamma}\rbrace)$ denote the possibly bulk-deformed curved $A_{\infty}$-algebra constructed in Section~\ref{plainfloer}. Denote by $(CM_*(M),\partial)$ the Morse chain complex constructed using the Morse--Smale pair already chosen in the construction of $\mathcal{A}$. We will construct a sequence of maps
\begin{align*}
\mathcal{OC}_m\colon CH_*(\mathcal{A})\to CM_{n-*+2m}(M)
\end{align*}
where the map $\mathcal{OC}_0$ is the usual open-closed map. The main result of this section says that these can be assembled to give a chain map $\mathcal{OC}\colon CC_*^+(\mathcal{A})\to CM_*(M)$. It follows (cf. Assumption~\ref{uniqueminmax}) that this induces a class in $HC^{*}_{+,\red}(\mathcal{A})$, and an $\infty$-inner product on $\mathcal{A}$ in turn.
\begin{lemma}\label{equivariancelemma}
For each $m\geq0$, we have that
\begin{align}
\mathcal{OC}_{m-1}\circ B + \mathcal{OC}_m\circ b = 0. \label{equivariance}
\end{align}
\end{lemma}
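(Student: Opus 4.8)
The plan is to prove the identity $\mathcal{OC}_{m-1}\circ B + \mathcal{OC}_m\circ b = 0$ by the standard strategy for constructing equivariant chain maps in Floer theory: exhibit the maps $\mathcal{OC}_m$ as signed counts of points in suitable one-dimensional moduli spaces, and then identify the desired algebraic identity with the enumeration of the boundary points of those moduli spaces. Concretely, $\mathcal{OC}_m$ should be defined by counting pearly configurations attached to a disk carrying $k+1$ cyclically ordered boundary marked points and one distinguished interior marked point, where $m$ of the boundary marked points are ``free'' (allowed to move, contributing extra dimension $2m$), together with an output Morse flow line into $CM_*(M)$ at the interior puncture. The input data on the boundary is a Hochschild chain $\underline{v}\otimes a_1\otimes\cdots\otimes a_k$: the underlined bimodule entry $v$ sits at the marked point immediately after the cyclic ``origin,'' and the remaining $a_i$'s are distributed on the other boundary marked points in cyclic order.

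First I would carefully set up the compactified moduli space $\overline{\mathcal{M}}_m$ of such pearly-disk configurations of virtual dimension $1$, invoking Assumption~\ref{regularity} (or its time-dependent analogue, if needed) to ensure it is a compact one-dimensional orbifold with boundary of the expected dimension. The codimension-one boundary strata fall into the usual families: (i) a boundary flow line or an interior flow line breaks, i.e.\ an $A_\infty$-operation $\mathfrak{m}_k$ (possibly $\mathfrak{m}_0$, since $\mathcal{A}$ is curved) or the Morse differential $\partial$ on $M$ splits off — these strata assemble into $\mathcal{OC}_m\circ b$ and (the Morse differential side of) the total differential; (ii) the disk bubbles off a sub-disk carrying some of the boundary inputs and one of the $m$ free marked points, which produces the term $\mathcal{OC}_{m-1}$ precomposed with the part of the Hochschild differential or $B$-operator that the collapsed configuration records; and (iii) one of the free boundary marked points collides with an adjacent fixed marked point, or runs to the cyclic origin. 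The crucial combinatorial point is that strata of type (iii), in which a free marked point degenerates, are exactly what produces the cyclic-rotation terms: summing over which marked point degenerates and in which direction reconstructs the operator $N=1+t+\cdots+t^{k-1}$ applied after the unit-insertion homotopy $\widetilde{s}$, hence the Connes operator $B=(1-t)\widetilde{s}N$, precomposed with $\mathcal{OC}_{m-1}$.

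The key steps, in order, are: (1) fix the precise definition of $\mathcal{OC}_m$ as a count over configurations with $m$ free marked points and verify the virtual dimension bookkeeping so that ``$\mathcal{OC}_{m-1}\circ B$'' and ``$\mathcal{OC}_m \circ b$'' land in the same degree; (2) enumerate the codimension-one boundary of $\overline{\mathcal{M}}_m$ in virtual dimension $1$; (3) match each boundary stratum with a term of $\mathcal{OC}_{m-1}\circ B$ or $\mathcal{OC}_m\circ b$, being careful that the special homotopy $\widetilde{s}$ of Lemma~\cite[Lemma 5.4]{C12}, involving the correction $\mathfrak{m}_0\otimes 1\otimes(-)$, is precisely what arises from the degeneration of a free marked point next to a configuration with no other marked points (this is where the curvature $\mathfrak{m}_0$ enters, and is the reason $\widetilde{s}$ rather than the naive $1\otimes(-)$ appears); (4) check that the signs assigned by the Koszul/orientation conventions of Section~\ref{hochinvts} agree with the orientations of the boundary strata induced from the given orientations on the pearly moduli spaces, so that the boundary sum vanishes and yields $\mathcal{OC}_{m-1}\circ B + \mathcal{OC}_m \circ b = 0$ rather than that expression with a wrong relative sign.

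The main obstacle I expect is step (3) together with step (4): namely the bookkeeping that shows the degenerations of the free marked points reassemble into exactly the Connes operator $B$ (with the unit-insertion homotopy $\widetilde{s}$, including its $\mathfrak{m}_0$-term), and that the signs work out. This is the heart of any ``$S^1$-equivariant open-closed map'' argument, and in the curved setting it is genuinely delicate because $\mathfrak{m}_0\neq 0$ means the bar complex is not acyclic and the usual reduced-complex simplification is unavailable on the nose; one has to track the correction terms in $\widetilde{s}$ geometrically. Everything else — transversality, compactness, the dimension count, and the identification of the ``interior breaking'' strata with $\partial\circ\mathcal{OC}_m$ and the ``boundary breaking'' strata with $\mathcal{OC}_m\circ b$ — is routine given Assumption~\ref{regularity} and the conventions already fixed.
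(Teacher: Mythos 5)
Your high-level strategy --- realize $\mathcal{OC}_m$ as a count over moduli spaces with $m$ extra marked points and read off the identity from the boundary of the one-dimensional spaces --- is the same as the paper's, but your geometric model is not the one the paper uses, and as stated it has a concrete problem. You propose that the extra data be $m$ \emph{free boundary} marked points ``contributing extra dimension $2m$''; a free boundary marked point contributes only one real dimension, so $m$ of them give a shift of $m$, not $2m$, and then $\mathcal{OC}_{m-1}\circ B$ and $\mathcal{OC}_m\circ b$ do not land in the same degree (recall $\mathcal{OC}_m\colon CH_*(\mathcal{A})\to CM_{n-*+2m}(M)$). The paper instead uses $m$ auxiliary \emph{interior} marked points $p_1,\dots,p_m$ subject to the nested norm constraints $0<|p_1|<\cdots<|p_m|<\tfrac12$ on the unit-disk representative, which is what makes the $2m$ shift come out correctly.

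The second, more substantive gap is the mechanism by which the Connes operator appears. The paper does not obtain $B$ from collisions of free boundary marked points; it factors the identity through auxiliary operations $\mathcal{OC}^{S^1}_m$, defined by moduli spaces with one more auxiliary interior point pinned to norm $\tfrac12$ but with free angle. Lemma~\ref{hochs1} ($\mathcal{OC}^{S^1}_{m-1}+\mathcal{OC}_m\circ b=0$) comes from the boundary of the one-dimensional spaces~\eqref{ocpearls}: the strata are pearly breaking (giving $\mathcal{OC}_m\circ b$), the stratum $|p_m|\to\tfrac12$ (giving $\mathcal{OC}^{S^1}_{m-1}$), and strata where $|p_i|=|p_{i+1}|$, which contribute zero because a forgetful map of the auxiliary point has one-dimensional fibers --- a step that relies on using a fixed $J$ and that your sketch has no counterpart for. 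Lemma~\ref{sectors} ($\mathcal{OC}^{S^1}_m=\mathcal{OC}_m\circ B$) then decomposes the circle $|p_{m+1}|=\tfrac12$ into sectors determined by the arguments of the boundary marked points and identifies each sector with a moduli space carrying one additional boundary marked point, which must be decorated by the strict unit for degree reasons; summing over sectors produces the unit insertion and the cyclic sum $N$ in $B$. Your proposed collision analysis is not developed enough to see that it reproduces exactly $(1-t)\widetilde{s}N$ with correct signs, and in particular your suggestion that the $\mathfrak{m}_0$-correction in $\widetilde{s}$ must be tracked geometrically is moot in the paper's argument, which works where that correction vanishes (on the reduced complex / via the unit-insertion description of $B$). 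To repair your write-up you would either need to switch to the auxiliary-interior-point model or supply a full dimension count and boundary analysis for your boundary-marked-point model, neither of which is present.
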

To construct $\mathcal{OC}_m$, we need to consider, for $m,k,\ell\in\mathbb{Z}_{\geq0}$, two types of (uncompactified) moduli spaces of domains
\begin{align}
\mathcal{P}_{m,k,\ell} \label{ocmod}\\
\mathcal{P}_{m,k,\ell}^{S^1} \label{s1mod}
\end{align}
consisting of disks with marked points satisfying some additional constraints. The maps $\mathcal{OC}_m$ are defined by counting pearly trees with a single vertex corresponding to a disk with domain in~\eqref{ocmod}, while the moduli spaces~\eqref{s1mod} are used in a similar way to define auxiliary operations arising in the proof of Lemma~\ref{equivariance}.

The elements of~\eqref{ocmod} are disks with $k$ cyclically ordered boundary marked points denoted $z_1,\ldots,z_k$ and $\ell+m+1$ interior marked points denoted
\[ w_1,\ldots,w_{\ell},p_{\out},p_1,\ldots,p_m.\]
The last $m$ of these marked points are called auxiliary, and $p_{\out}$ is called the output marked point. Additionally, on the unit disk representative of such a disk which takes $z_k$ to $1$ and $p_{\out}$ to $0$, the norms of the points $p_i$ are required to satisfy
\begin{align}
0<|p_1|<\cdots<|p_m|<\frac{1}{2}. \label{ocmodnorms}
\end{align}
Define $\theta_i\coloneqq\arg(p_i)$ to be the argument of $p_i$ taken with respect to the unit disk representative.
\begin{figure}
\begin{tikzpicture}
\draw[fill = gray!40] (0,0) circle (4 cm);
\draw[dashed] (0,0) circle (1 cm);
\draw[dashed] (0,0) circle (2 cm);
\draw[dashed] (0,0) circle (3 cm);

\node at (0:2.5) {$\cdots$};

\coordinate (z1) at (-15:4);
\draw[fill = black] (z1) circle (2pt);

\coordinate (z2) at (72:4);
\draw[fill = black] (z2) circle (2pt);

\coordinate (z3) at (144:4);
\draw[fill = black] (z3) circle (2pt);

\coordinate (z4) at (216:4);
\draw[fill = black] (z4) circle (2pt);

\coordinate (zk) at (270:4);
\node[anchor = north] at (zk) {$z_k$};
\draw[fill = black] (zk) circle (2pt);

\coordinate (pout) at (0,0);
\node[anchor = north] at (pout) {$p_{\out}$};
\draw[fill = white] (pout) circle (2pt);

\coordinate (p1) at (11:1cm);
\draw[fill = white] (p1) circle (2pt);
\node[anchor = west] at (p1) {$p_1$};

\coordinate (p2) at (109: 2cm);
\draw[fill = white] (p2) circle (2pt);
\node[anchor = south] at (p2) {$p_2$};

\coordinate (pm) at (192: 3 cm);
\draw[fill = white] (pm) circle (2pt);
\node[anchor = west] at (pm) {$p_m$};

\coordinate (w1) at (150:1.5 cm);
\draw[fill = black] (w1) circle (2pt);

\coordinate (w2) at (280:1.7 cm);
\draw[fill = black] (w2) circle (2pt);

\coordinate (wl) at (55: 2.5 cm);
\draw[fill = black] (wl) circle (2pt);
\node[anchor=east] at (wl) {$w_{\ell}$};
\end{tikzpicture}\caption{An element of~\eqref{ocmod}. The black interior marked points are the ones which are neither auxiliary nor the output.}
\end{figure}
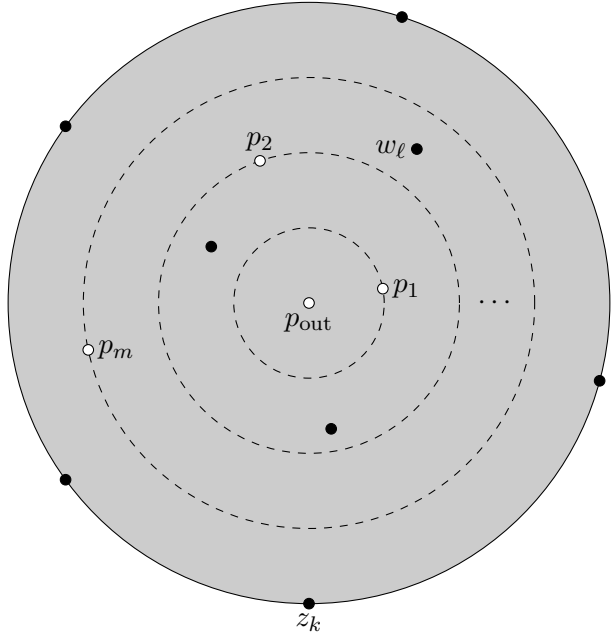

Elements of~\eqref{s1mod} are disks with $k$ cyclically ordered boundary marked points $z_1,\ldots,z_k$ and $\ell+m+2$ interior marked points denoted
\[ w_1,\ldots,w_{\ell},p_{\out},p_1,\ldots,p_{m+1} \]
where the last $m+1$ of these marked points are auxiliary. On the unit disk representative taking $z_k$ to $1$ and $p_{\out}$ to $0$, the norms of the auxiliary marked points are required to satisfy
\[ 0 <|p_1|<\cdots<|p_m|<|p_{m+1}| = \frac{1}{2}. \]
We have an abstract identification $\mathcal{P}^{S^1}_{m,k,\ell}\cong S^1\times\mathcal{P}_{m,k,\ell}$ where the $S^1$-coordinate is given by $\theta_{m+1}$. Under this identification, we orient~\eqref{s1mod} by giving $S^1$ the opposite of its boundary orientation, and giving the product the opposite of the product orientation.

We form uncompactified moduli spaces of pseudoholomorphic disks in $M$ with boundary on $L$ whose domains belong to the moduli spaces~\eqref{ocmod} or~\eqref{s1mod}. These are denoted
\begin{align}
\mathcal{P}_{m,k,\ell}(\beta) \label{ocmodbeta}\\
\mathcal{P}_{m,k,\ell}^{S^1}(\beta) \label{s1modbeta}
\end{align}
Each of these moduli spaces has naturally defined evaluation maps at each of the boundary and interior marked points.

To define the pearly trees relevant to the open-closed map, we need to modify our orientation convention for trees. 
\begin{definition}
Let $T$ be a bicolored tree in the sense of Definition~\eqref{bicoloredtree} equipped with a ribbon structure, a metric, and a labeling of its vertices by classes $\beta_v\in H_2(M,L)$ for $v\in E_{\circ}(T)$ and $\beta_v\in H_2(M)$ for $v\in E_{\bullet}(T)$. Suppose that the edge set of $T$ can be written as
\begin{align*}
E_{\circ}(T) &= \lbrace e_1^{\circ},\ldots,e_k^{\circ}\rbrace \\
E_{\bullet}(T) &= \lbrace e_1^{\bullet},\ldots,e_{\ell}^{\bullet},e_{\out}^{\bullet}\rbrace
\end{align*}
We say that $T$ is of \textit{open-closed type} if its orientation is obtained by declaring that
\begin{itemize}
\item[•] $e^{\bullet}_{\out}$ is an outgoing edge adjacent to the vertex $v_{\out}\coloneqq s(e^{\bullet}_{\out})\in E_{\circ}(T)$, and all other edges of $T$ point toward $v_{\out}$.
\end{itemize}
\end{definition}
The choices of homology classes and the valences of each vertex determine associated moduli spaces $\mathcal{M}(\beta_v)\coloneqq\mathcal{M}_{\val_{\circ}(v)-1,\val_{\bullet}(v)}(\beta_v)$ if $v\in E_{\circ}(T)$ or $\mathcal{M}(\beta_v)\coloneqq\mathcal{M}_{\val_{\bullet}(v)}(\beta_v)$ if $v\in E_{\bullet}(T)$. To the vertex $v_{\out}$, we associate one of the moduli spaces \begin{align}
\mathcal{P}_{m,\val_{\circ}(v_{\out}),\val_{\bullet}(v_{\out})}(\beta_{v_{\out}}) \\
\mathcal{P}^{S^1}_{m,\val_{\circ}(v_{\out}),\val_{\bullet}(v_{\out})+1}(\beta_{v_{\out}}).
\end{align}
In particular, auxiliary marked points do not have corresponding edges of $T$, but all other interior and boundary marked points do. The bicoloring on $T$ induces an evaluation map $\ev_T$ defined similarly to~\eqref{evtree}. Finally, for each edge $e\in E_{\circ}(T)$, choose an $f_L$-admissible Morse function on $L$, and for each edge $e\in E_{\bullet}(T)$, choose an $f_M$-admissible Morse function on $M$. For each combinatorially finite edge $e\in E^{f}_{\circ/\bullet}(T)$, we can define embeddings as in~\eqref{graphcirc} and~\eqref{graphbullet}, the images of which are still denoted $G_e$. Given the data as above, we can now define the moduli spaces of pearly trees contributing to the cyclic open-closed map.
\begin{definition}\label{ocpearlspaces}
Let $x = (x_1,\ldots,x_k)$ be a sequence of input critical points in $\crit(f_L)$, let $y = (y_1,\ldots,y_{\ell})$ be a sequence of input critical points in $\crit(f_M)$, and let $y_{\out}\in\crit(f_M)$ be an output critical point. The \textit{moduli spaces of open-closed pearly trees} of class $\beta\in H_2(M,L)$ are denoted
\begin{align*}
\mathcal{P}_{m}(x;y,y_{\out};\beta)\\
\mathcal{P}_{m}^{S^1}(x;y,y_{\out};\beta) 
\end{align*}
and are defined to be
\begin{align*}
\coprod_T\ev_T^{-1}\left(\prod_{j=1}^{\ell}W^u(y_j)\times\prod_{j=1}^k W^u(x_j)\times\prod_{e\in E^f_{\circ}(T)\sqcup E^f_{\bullet}(T)}G_e\times W^s(y_{\out})\right)
\end{align*}
where the output vertex $v_{\out}$ is associated an element of~\eqref{ocmodbeta} or~\eqref{s1modbeta}, respectively. Both of these spaces admit natural Gromov compactifications
\begin{align}
\overline{\mathcal{P}}_{m}(x;y,y_{\out};\beta) \label{ocpearls}\\
\overline{\mathcal{P}}_{m,k,\ell}^{S^1}(x;y,y_{\out};\beta). \label{s1pearls}
\end{align}
\end{definition}
\begin{figure}
\begin{tikzpicture}
\begin{scope}[scale = 0.75]
\node (N) at (0,0) {};
\node (E) at (0,4) {$y_{\out}$};

\begin{scope}[very thick,decoration={
	markings,
	mark=at position 0.75 with {\arrow{>}}}]
\draw[postaction={decorate}, thick] (N) -- (E);
\end{scope}

\begin{scope}[very thick,decoration={
	markings,
	mark=at position 0.5 with {\arrow{>}}}]
\draw[postaction = {decorate}, thick] (3,4) -- (0,0);
\draw[postaction={decorate}, thick] (3,7) -- (3,4);
\end{scope}
\node[anchor = south] at (3,7) {$y_2$};

\fill[white] (0,0) circle (2cm);
\fill[white] (3,4) circle (1cm);

\node[anchor = south] at (6,4) {$y_1$};
\begin{scope}[very thick,decoration={
	markings,
	mark=at position 0.25 with {\arrow{>}}}]
\draw[postaction={decorate}, thick] (6,4) -- (6,0);
\end{scope}
\fill[white] (6,0) circle (2cm);

\draw (2,0) arc (0:180:2);
\draw (-2,0) arc (180:360:2 and 0.6);
\draw[dashed] (2,0) arc (0:180:2 and 0.6);

\draw (8,0) arc (0:180:2);
\draw (4,0) arc (180:360:2 and 0.6);
\draw[dashed] (8,0) arc (0:180:2 and 0.6);

\coordinate (A) at (2,0);
\coordinate (B) at (4,0);

\begin{scope}[very thick,decoration={
	markings,
	mark=at position 0.5 with {\arrow{>}}}]
\draw[postaction={decorate}, thick] (B) -- (A);
\draw[postaction={decorate}, thick] (10,0) -- (8,0);
\draw[postaction={decorate}, thick] (-4,0) -- (-2,0);
\end{scope}

\node[anchor = west] at (10,0) {$x_1$};
\node[anchor = east] at (-4,0) {$x_2$};

\begin{scope}[xshift = 3cm, yshift = 4cm]
\draw (0,0) circle (1cm);
\draw (-1,0) arc (180:360:1 and 0.3);
\draw[dashed] (1,0) arc (0:180:1 and 0.3);
\end{scope}

\node at (0,-1) {$\mathcal{P}_{0,\val_{\circ}(v_{\out}),\val_{\bullet}(v_{\out})+1}(\beta_{v_{\out}})$};
\node at (6,-1) {$\mathcal{M}_{\val_{\circ}(v)+1,\val_{\bullet}(v)}(\beta_v)$};
\end{scope}
\end{tikzpicture}\caption{A pearly tree in $\mathcal{P}(x_1,x_2;y_{\out},y_1,y_2;\beta)$.}
\end{figure}
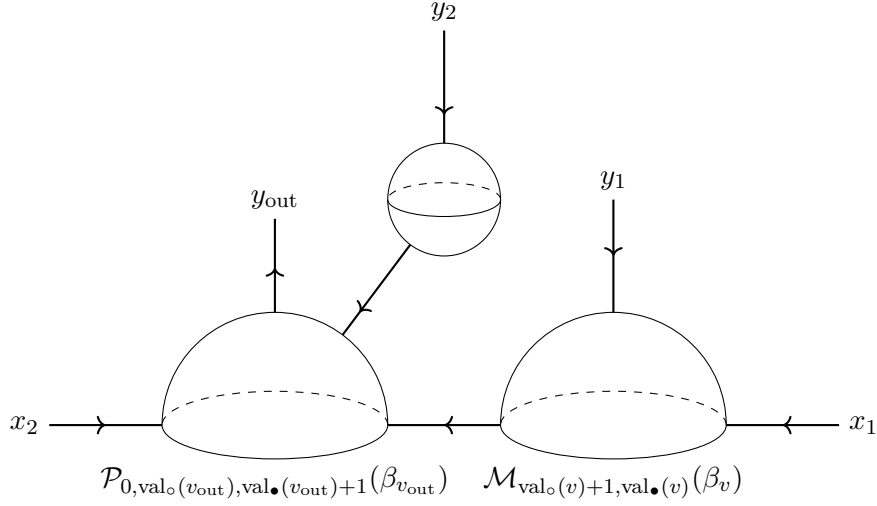
\begin{remark}
To incorporate domain-dependent perturbations in Definition~\eqref{ocpearlspaces}, one would need to consider domain-dependent perturbations on disks satisfying the conditions laid out in~\cite{Gan23}. One would also have to formulate consistency conditions for the perturbation data associated to trees of open-closed type along the lines of those in~\cite{CW22} or~\cite{VWX20}.
\end{remark}
The regularity of~\eqref{ocpearls} and~\eqref{s1pearls} do not immediately follow from Assumption~\ref{regularity}, so we must also assume that:
\begin{assumption}\label{regularityoc}
The moduli spaces~\eqref{ocpearls} and~\eqref{s1pearls} (defined with respect to the almost complex structure $J$ subject to Assumption~\ref{regularity}) of open-closed pearly trees of virtual dimension $\leq 1$ are compact oriented orbifolds of the expected dimension.
\end{assumption}
This assumption is sufficient for us to define the components of the cyclic open-closed map.
\begin{definition}
Given sequences of input critical points $x$ and $y$ as in Definition~\ref{ocpearlspaces}, define
\begin{align*}
\mathfrak{oc}^{\beta}_{m,k,\ell}(x;y)&\coloneqq\sum_{y_0\in\crit(f_M)}(-1)^{\star_k}\hol_{\nabla}(\beta)\#|\mathcal{P}_{m,k,\ell}(x;y,y_{\out};\beta)|y_{\out} \\
\mathfrak{oc}^{S^1,\beta}_{m,k,\ell}(x;y)&\coloneqq\sum_{y_{\out}\in\crit(f_M)}(-1)^{\star^{S^1}_k}\hol_{\nabla}(\beta)\#|\mathcal{P}_{m,k,\ell}^{S^1}(x;y,y_{\out};\beta)|y_{\out}
\end{align*}
and extend these $R$-linearly. Here $\#|\mathcal{P}_{m,k,\ell}(x;y,y_{\out};\beta)|$ and $\#|\mathcal{P}_{m,k,\ell}^{S^1}(x;y,y_{\out};\beta)|$ are signed counts of elements in the respective moduli spaces, and the signs are determined by
\begin{align*}
\star_k = \sum_{j=1}^k (n+j)|\alpha_j|' \\
\star^{S^1}_k = \star_k+\maltese_k-1.
\end{align*}

Define operations
\begin{align*}
\mathfrak{oc}_{m,k,\ell}(x;y)&\coloneqq\sum_{\beta\in H_2(M,L;\mathbb{Z})}\mathfrak{oc}^{\beta}_{m,k,\ell}(x;y) \\
\mathfrak{oc}^{S^1}_{m,k,\ell}(x;y)&\coloneqq\sum_{\beta\in H_2(M,L;\mathbb{Z})}\mathfrak{oc}^{S^1,\beta}_{m,k,\ell}(x;y). \\
\end{align*}
For any bulk deformation parameter $\gamma$, define the bulk-deformed operations
\begin{align}
\mathcal{OC}_{m,k}(x)\coloneqq\sum_{\ell=0}^{\infty}\frac{1}{\ell!}\mathfrak{oc}_{m,k,\ell}(x;\gamma^{\otimes\ell}) \\
\mathcal{OC}_{m,k}^{S^1}(x)\coloneqq\sum_{\ell=0}^{\infty}\frac{1}{\ell!}\mathfrak{oc}_{m,k,\ell}^{S^1}(x;\gamma^{\otimes\ell}).
\end{align}
\end{definition}
We prove Lemma~\ref{equivariancelemma} by examining the boundary strata of the moduli spaces of open-closed pearls. First, we describe the boundary strata of~\ref{ocpearls}.
\begin{lemma}\label{ocpearlsboundary}
Consider a sequence of inputs $y = y_1\otimes\cdots\otimes y_{\ell}\in\ CM^*(M;Q)^{\otimes\ell}$, an input sequence $x = x_1\otimes\cdots\otimes x_k\in CM^*(L;R)^{\otimes k}$, an output $y_{\out}\in CM_*(M;Q)$, and a class $\beta\in H_2(M,L;\mathbb{Z})$ such that~\eqref{ocpearls} is $1$-dimensional. Let $I_1\sqcup I_2 = \lbrace 1,\ldots,\ell\rbrace$ denote a partition of the set of positive integers $\leq\ell$ into disjoint ordered subsets, and let $\beta = \beta_1+\beta_2\in H_2(M,L;\mathbb{Z})$. Then the boundary of~\eqref{ocpearls} is covered by the images under the natural inclusions of the following products of zero-dimensional moduli spaces:
\begin{align}
\mathcal{M}(x_{\out};x_{i+1},\ldots,x_{i+j};y_{I_1},y_{\out};\beta_1)&\times\mathcal{P}_m(x_1,\ldots,x_i,x_{\out},x_{i+j+1},\ldots,x_k;y_{I_2},y_{\out};\beta_2) \label{ocstdbreaks}\\
&\mathcal{P}^{S^1}_{m-1}(x;y,y_{\out};\beta) \label{normsto12}\\
&\mathcal{P}^{i,i+1}_m(x;y,y_{\out};\beta)\label{normscoincide}
\end{align}
where~\eqref{normscoincide} consists of the subset of pearls whose output vertex is decorated by a pseudoholomorphic disk whose domain is of the sort contained in~\eqref{ocmod}, except that for some $1\leq i\leq m-1$, the norms of the auxiliary marked points in the unit disk representative satisfy $|p_i| = |p_{i+1}|$.
\end{lemma}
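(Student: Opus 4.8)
The plan is to establish the statement by a direct analysis of the Gromov compactification $\overline{\mathcal{P}}_m(x;y,y_{\out};\beta)$. By Assumption~\ref{regularityoc}, when this space is one-dimensional it is a compact oriented $1$-orbifold with boundary, so the content of the lemma is to enumerate its boundary points and match them with the three families~\eqref{ocstdbreaks},~\eqref{normsto12}, and~\eqref{normscoincide}. Each boundary point is a Gromov limit of rigid-minus-one-dimensional open-closed pearly trees, and I would organize the possible limiting degenerations into two types: \textbf{(A)} those affecting the pseudoholomorphic maps and the underlying metric ribbon tree -- a Morse flow line breaking along an edge, or a disk or sphere bubbling off -- and \textbf{(B)} those in which only the domain of the distinguished output component degenerates, namely the auxiliary marked point norms $(|p_1|,\dots,|p_m|)$ reaching the topological boundary of the region $\{0<|p_1|<\cdots<|p_m|<\tfrac12\}$. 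I would treat the two types in turn.

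Type (A) is handled exactly as in the boundary analysis underlying the curved $A_\infty$-relations for $\{\mathfrak{m}_k^\gamma\}$ and the chain map property of the ordinary open-closed map; since no domain-dependent perturbations are used, all bubbling is recorded in the combinatorial type of $T$, so the codimension-one strata here are the once-broken configurations. A flow line breaking along an edge of $T_\circ$ splits $T$ at a critical point $x_{\out}\in\crit(f_L)$ into a rigid ordinary pearly tree, contributing to some $\mathfrak{q}^{\beta_1}_{j,\cdot}$ and hence to $\mathfrak{m}^\gamma_j$, whose output is reinserted at one of the boundary inputs of a rigid open-closed pearly tree still carrying a $\mathcal{P}_m$-type output vertex; allowing this reinsertion at every cyclic position, including those wrapping past the distinguished boundary point $z_k$, reproduces precisely~\eqref{ocstdbreaks} together with the Morse-differential terms, and exhausts the contributions of $\mathcal{OC}\circ b$. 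I would then rule out every remaining degeneration as codimension $\geq 2$ by the standard dimension estimates, using Assumptions~\ref{regularity} and~\ref{regularityoc}: a sphere bubble forces its attaching node onto $L$ or onto a distinguished point; two interior marked points colliding is a real-codimension-two phenomenon; the wall $|p_1|=0$ forces $p_1$ to collide with the output point $p_{\out}$ and hence a sphere bubble there; and the auxiliary points cannot reach $\partial D^2$ because they are confined to the disk of radius $\tfrac12$. Breakings along $E_\bullet$-edges or along the output edge either reorganize into the form~\eqref{ocstdbreaks} or vanish because $\gamma$ is a $d$-closed bulk class.

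For type (B), I would note that the codimension-one walls of $\{0<|p_1|<\cdots<|p_m|<\tfrac12\}$ are $|p_1|=0$ (already discarded in (A)), $|p_i|=|p_{i+1}|$ for $1\leq i\leq m-1$, and $|p_m|=\tfrac12$. The facet $|p_i|=|p_{i+1}|$ requires no identification: by construction it is the locus $\mathcal{P}^{i,i+1}_m(x;y,y_{\out};\beta)$ named in the statement, and at a generic such point the two auxiliary points share a norm but have distinct arguments, so it is a genuine codimension-one facet carrying over verbatim to the pearly moduli space, giving~\eqref{normscoincide}. In the limit $|p_m|\to\tfrac12$, the outermost auxiliary point becomes pinned to the circle of radius $\tfrac12$ with its argument $\theta_m$ now a free $S^1$-parameter; matching this against the definition of the $S^1$-decorated domains $\mathcal{P}^{S^1}_{m',k,\ell}$, which carry $m'+1$ auxiliary points with the outermost pinned at norm $\tfrac12$ and $S^1$-coordinate its argument, identifies this facet (for $m'=m-1$) with the moduli of open-closed pearly trees whose output vertex is decorated by a $\mathcal{P}^{S^1}_{m-1}$-domain, i.e. with $\mathcal{P}^{S^1}_{m-1}(x;y,y_{\out};\beta)$ as in~\eqref{normsto12}. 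The orientation convention imposed on $\mathcal{P}^{S^1}_{m,k,\ell}$ -- the opposite of the boundary orientation on the $S^1$-factor and the opposite of the product orientation -- is exactly the one that renders this identification orientation-preserving, so this step amounts to unwinding that convention.

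The main obstacle I anticipate is twofold and lies almost entirely in the bookkeeping rather than in any new idea. First, proving exhaustiveness -- that the degenerations in (A) beyond~\eqref{ocstdbreaks} and the wall $|p_1|=0$ are all codimension $\geq 2$ -- requires running through the Gromov compactification case by case over all vertex types and all ways a bubble can form or marked points can interact; each case is routine in isolation, but collectively this is where the effort goes, and it depends essentially on the regularity Assumptions~\ref{regularity} and~\ref{regularityoc}. Second, and more delicate, is the orientation match in~\eqref{normsto12}, since it passes through the nonstandard orientation convention on the $S^1$-decorated domains; one must check that the boundary orientation induced on the facet $|p_m|=\tfrac12$ agrees with it, which is precisely what the ``opposite orientation'' choices in the definition of $\mathcal{P}^{S^1}_{m,k,\ell}$ are designed to arrange. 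The signs attached to the flow-line-breaking strata~\eqref{ocstdbreaks} must similarly be shown to match the Koszul signs in the Hochschild differential and the sign conventions $\star_k$ and $\star_k^{S^1}$ entering the definitions of $\mathfrak{oc}_{m,k,\ell}$ and $\mathfrak{oc}^{S^1}_{m,k,\ell}$; this is routine but lengthy.
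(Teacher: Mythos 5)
Your proposal is correct and follows essentially the same route as the paper: classify the codimension-one strata of the Gromov compactification into the standard pearly breakings (possible because a fixed $J$ is used), which give~\eqref{ocstdbreaks}, and the degenerations of the auxiliary marked-point norms within the region $0<|p_1|<\cdots<|p_m|<\tfrac12$, whose codimension-one walls $|p_i|=|p_{i+1}|$ and $|p_m|=\tfrac12$ give~\eqref{normscoincide} and~\eqref{normsto12} while $|p_1|=0$ is codimension two. The paper's own proof is a brief sketch of exactly this argument, so your more detailed case analysis (including the orientation discussion, which the paper defers to a sign analysis in the style of~\cite{Gan23}) is a faithful elaboration rather than a different approach.
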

\begin{proof}
The first type of boundary breaking is of the same sort that occurs in one-dimensional moduli spaces of ordinary pearly trajectories. These behave as expected because we have used a fixed $J$ to define the Cauchy--Riemann equation. Because of the constraints on their norms, the auxiliary marked points must always remain on the same component of any Gromov limit of a sequence such of curves. The boundary components~\eqref{normsto12} and~\eqref{normscoincide} arise from sequences of pearly trajectories coming from a sequence of holomoprhic disks in which the norms of the auxiliary marked points change.
\end{proof}
\begin{lemma}\label{hochs1}
For each $m\geq0$, we have that
\[ \mathcal{OC}^{S^1}_{m-1} + \mathcal{OC}_m\circ b = 0 \, .\]
\end{lemma}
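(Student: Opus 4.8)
The plan is to prove the identity by analyzing the boundary of the one-dimensional moduli spaces $\overline{\mathcal{P}}^{S^1}_{m,k,\ell}(x;y,y_{\out};\beta)$ of $S^1$-type open-closed pearly trees, in exact parallel with the way Lemma~\ref{equivariancelemma} is reduced to counting the boundary strata of~\eqref{ocpearls}. Recall the abstract identification $\mathcal{P}^{S^1}_{m,k,\ell}\cong S^1\times\mathcal{P}_{m,k,\ell}$, where the $S^1$-factor records the angular coordinate $\theta_{m+1}$ of the outermost auxiliary marked point $p_{m+1}$, whose norm is pinned at $\tfrac12$. The key point is that, since the radial positions of the auxiliary marked points are constrained by $0<|p_1|<\cdots<|p_m|<|p_{m+1}|=\tfrac12$, the codimension-one degenerations of these domains are of essentially the same three kinds catalogued in Lemma~\ref{ocpearlsboundary}: (i) a genuine edge-breaking of the pearly tree, in which a subtree splits off carrying some of the boundary and interior inputs; (ii) the norm $|p_{m+1}|$ of the outermost auxiliary point colliding with $|p_m|$ (i.e. the innermost ``wall''), which identifies that stratum with a copy of the $\mathcal{P}^{S^1}_{m-1}$-moduli space after forgetting the now-redundant angular coordinate; and (iii) the collision $|p_i|=|p_{i+1}|$ of two interior auxiliary points for $1\le i\le m-1$, which as in the proof of Lemma~\ref{ocpearlsboundary} cancel in pairs and contribute nothing.

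\textbf{Steps.} First I would write down the virtual dimension count for $\overline{\mathcal{P}}^{S^1}_{m,k,\ell}(x;y,y_{\out};\beta)$, confirming that the relevant spaces are one-dimensional precisely when the output $\mathcal{OC}^{S^1}_{m,k}$ term and the composite $\mathcal{OC}_{m,k}\circ b$ term live in the same degree; the extra $S^1$-factor shifts the dimension by one relative to the $\mathcal{P}_{m,k,\ell}$ count, which is what matches $\mathcal{OC}^{S^1}_{m-1}$ against $\mathcal{OC}_m\circ b$. Second, I would enumerate the boundary strata exactly as in Lemma~\ref{ocpearlsboundary}: the edge-breakings (i) are of two sub-types — one where the broken-off component is an ordinary moduli space $\mathcal{M}$ of pearly trees (contributing $\mathfrak{m}_k$ or $\mathfrak{q}_{k,\ell}$ pieces, hence after bulk summation a term in $\mathcal{OC}^{S^1}_{m}\circ b$ of the ``$\mathfrak{m}$ applied to inputs'' type), and one where the component carrying the output is an $\mathcal{M}$-space while the $S^1$-domain splits off (contributing the ``post-compose with differential'' part of $b$); together these assemble to $\mathcal{OC}^{S^1}_{m}\circ b$. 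Wait — here I should be careful: the statement to prove is $\mathcal{OC}^{S^1}_{m-1}+\mathcal{OC}_m\circ b=0$, so the edge-breakings must instead assemble into $\mathcal{OC}_m\circ b$ (using that the $S^1$-domain with its outermost point on the wall degenerates, by stratum (ii), to a plain $\mathcal{P}_m$-domain), and stratum (ii) is exactly the $\mathcal{OC}^{S^1}_{m-1}$ contribution coming from the outer wall $|p_{m+1}|\to\tfrac12$ versus $|p_m|$. Third, I would invoke stratum (iii) and the orientation bookkeeping: the $|p_i|=|p_{i+1}|$ strata come in pairs related by swapping the two coincident auxiliary points, with opposite induced orientations, so their contributions cancel — this is the same mechanism used to dispose of~\eqref{normscoincide} in Lemma~\ref{ocpearlsboundary}. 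Finally, summing the signed counts over all boundary components of every one-dimensional $\overline{\mathcal{P}}^{S^1}$ and using that the total signed boundary count of an oriented one-manifold is zero, I would read off $\mathcal{OC}^{S^1}_{m-1}+\mathcal{OC}_m\circ b=0$; the constant $\mathfrak{m}_{-1}$ and unit subtleties do not enter because the open-closed maps land in $CM_*(M)$ and the relevant inputs are reduced.

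\textbf{Main obstacle.} The delicate part is the sign analysis. One must verify that, under the identification of stratum (ii) with $\mathcal{P}^{S^1}_{m-1}$ and of the broken-tree strata with the appropriate products of $\mathcal{P}_m$- and $\mathcal{M}$-moduli spaces, the signs prescribed by $\star_k$ and $\star^{S^1}_k=\star_k+\maltese_k-1$ together with the orientation conventions on $\mathcal{P}^{S^1}$ (the $S^1$-factor taken with its \emph{opposite} boundary orientation, and the product taken with the \emph{opposite} product orientation) conspire to produce exactly the relative sign $+1$ between the two terms rather than, say, a degree-dependent Koszul sign. This is the analogue of the sign verification needed for the usual open-closed map being a chain map, and I expect it to require a careful but routine comparison of the induced boundary orientations against the Koszul signs appearing in the Hochschild differential $b$; once it is pinned down at $m=0$ (where $\mathcal{OC}^{S^1}_{-1}=0$ and the identity reduces to $\mathcal{OC}_0\circ b=0$, essentially the statement that the ordinary open-closed map is a chain map out of the bar complex), the inductive/general case follows by the same local computation at each auxiliary marked point.
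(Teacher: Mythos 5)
There are two genuine problems with your proposal. First, you analyze the boundary of the wrong moduli space. The identity $\mathcal{OC}^{S^1}_{m-1}+\mathcal{OC}_m\circ b=0$ comes from the one-dimensional moduli spaces $\overline{\mathcal{P}}_m(x;y,y_{\out};\beta)$ of \emph{ordinary} open-closed pearly trees, not from $\overline{\mathcal{P}}^{S^1}_{m,k,\ell}$: the space $\mathcal{P}^{S^1}_{m-1}$ (with $m$ auxiliary points, the outermost pinned at norm $\tfrac12$) is precisely the codimension-one face $|p_m|=\tfrac12$ of $\mathcal{P}_m$ (where $|p_m|<\tfrac12$), which is why a rigid $\mathcal{P}^{S^1}_{m-1}$ configuration appears in the boundary of a one-dimensional $\mathcal{P}_m$ configuration. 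This is exactly the content of Lemma~\ref{ocpearlsboundary}: the boundary of~\eqref{ocpearls} is covered by the breakings~\eqref{ocstdbreaks} (giving $\mathcal{OC}_m\circ b$), the face~\eqref{normsto12} (giving $\mathcal{OC}^{S^1}_{m-1}$), and the equal-norm loci~\eqref{normscoincide}. If you instead start from $\overline{\mathcal{P}}^{S^1}_m$, its edge-breakings assemble into $\mathcal{OC}^{S^1}_m\circ b$, not $\mathcal{OC}_m\circ b$, and the patch you attempt mid-proof (an $S^1$-domain ``degenerating to a plain $\mathcal{P}_m$-domain,'' or the collision $|p_m|=|p_{m+1}|=\tfrac12$ being ``identified with $\mathcal{P}^{S^1}_{m-1}$ after forgetting the angular coordinate'') is not a valid identification: both points at radius $\tfrac12$ retain independent arguments, so that stratum is of the same degenerate type as~\eqref{normscoincide} and cannot be conflated with $\mathcal{P}^{S^1}_{m-1}$.

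Second, your mechanism for discarding the strata $|p_i|=|p_{i+1}|$ is not the right one and does not work as stated. These walls do not ``come in pairs related by swapping the two coincident auxiliary points with opposite orientations'': since $\mathcal{P}_m$ is the single chamber $0<|p_1|<\cdots<|p_m|<\tfrac12$, each wall $|p_i|=|p_{i+1}|$ occurs exactly once as a boundary face (the opposite ordering of norms is simply not part of the moduli space), so there is nothing to cancel against. The correct argument, which is the one the paper uses, is that the stratum $\mathcal{P}^{i,i+1}_m$ admits the forgetful map~\eqref{forgetful} to $\mathcal{P}_{m-1}$ with one-dimensional fibers (the argument of the forgotten point is unconstrained), so it contains no isolated configurations and the corresponding operations ${}^{i,i+1}\mathcal{OC}_m$ vanish identically. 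Your overall strategy --- degree count, boundary enumeration, sign analysis patterned on Ganatra --- is the right shape, but without correcting which one-dimensional space you take the boundary of, and without the forgetful-map argument for~\eqref{normscoincide}, the proof does not go through.
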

\begin{proof}
Consider a sequence of input critical points $x = (x_1,\ldots,x_k)$ in $\crit(f_L)$ and an output critical point $y_{\out}\in\crit(f_M)$ for which the moduli spaces $\mathcal{P}_{m}(x;\gamma^{\otimes\ell},y_{\out};\beta)$ are one-dimensional, where $\gamma$ is a bulk parameter. Lemma~\ref{ocpearlsboundary} implies that we can write
\[ 0 = \mathcal{OC}^{S^1}_{m-1}+\mathcal{OC}_m\circ b+\sum_{i=1}^{m-1}{}^{i,i+1}\mathcal{OC}_{m}\]
where the operations ${}^{i,i+1}\mathcal{OC}_m$ are defined by counting pearly trees in moduli spaces of the form~\eqref{normscoincide}. The first summand corresponds to the boundary components~\eqref{normsto12}, and the second summand corresponds to the boundary components~\eqref{ocstdbreaks}.

All of the operations in the last summand vanish, because there is a forgetful map
\begin{align}
\mathcal{P}_m^{i,i+1}(x,y,y_{\out};\beta)\to\mathcal{P}_{m-1}(x,y,y_{\out};\beta) \label{forgetful}
\end{align}
which forgets the auxiliary marked point $p_{i+1}$, and shifts the labels of all remaining auxiliary marked points down by $1$. This implies that the elements of $\mathcal{P}_m^{i,i+1}(x,y,y_{\out};\beta)$ are never isolated, as this forgetful map always has one-dimensional fibers. The lemma now follows from a sign analysis of the sort carried out in~\cite{Gan23}.
\end{proof}
\begin{remark}
For us, the existence of the forgetful map~\eqref{forgetful} follows because we have defined all moduli spaces of pseudoholomorphic disks using a fixed almost complex structure. In~\cite{Gan23}, the conditions imposed on domain-dependent perturbations for open-closed moduli spaces imply that the relevant analogue~\eqref{forgetful} exists. By~\cite[Remark 3.1]{F10}, we cannot expect to construct Kuranishi structures which are compatible with forgetful maps of interior marked points. Nevertheless, one might hope for an independent proof in that setting that there are no isolated pearly trees in $\mathcal{P}_m^{i,i+1}(x,y,y_{\out};\beta)$.
\end{remark}
In~\cite[Proposition 12]{Gan23}, Deligne--Mumford type compactifications of~\eqref{s1mod} are decomposed into sectors corresponding to the angle of the auxiliary marked point $p_1$. The following lemma is proved by showing, roughly, that this decomposition into sectors yields a decomposition of the moduli spaces~\eqref{s1pearls} of pearly trees.
\begin{lemma}\label{sectors}
For each $m\geq0$, we have that
\[ \mathcal{OC}^{S^1}_m = \mathcal{OC}_m\circ B \, .\]
\end{lemma}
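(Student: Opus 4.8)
The plan is to decompose the circle of values of the $S^1$-coordinate $\theta_{m+1}=\arg(p_{m+1})$ into arcs and to match the count of open-closed pearly trees in~\eqref{s1pearls} over each arc with one term of $\mathcal{OC}_m\circ B$. The input from~\cite[Proposition 12]{Gan23} is a decomposition of the (Deligne--Mumford-type compactified) moduli of domains~\eqref{s1mod}: the arguments of the $k$ boundary marked points $z_1,\dots,z_k$ cut the $\theta_{m+1}$-circle into arcs, over each of which the cyclic position of the radial projection of $p_{m+1}$ to $\partial D^2$ among $z_1,\dots,z_k$ is constant. Renormalizing the unit-disk representative of a configuration in the $i$th arc so that the projection of $p_{m+1}$ sits at the basepoint $1\in\partial D^2$ identifies that arc with the moduli of domains of the form~\eqref{ocmod} carrying one extra boundary marked point inserted into the $i$th cyclic slot among $z_1,\dots,z_k$.

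First I would transfer this to pearly trees. Because $p_{m+1}$ is an auxiliary point --- carrying no geometric constraint and no edge of the underlying ribbon tree --- the extra boundary marked point it produces under the renormalization above must be labelled by an input imposing no constraint; by strict unitality this is the unit, represented by the unique minimum of $f_L$, whose unstable manifold is all of $L$. Thus over the $i$th arc the open-closed pearly trees are precisely those counted by $\mathcal{OC}_m$ applied to the $i$th term of $B$, namely the term in which the strict unit is the distinguished (bimodule) input and the remaining inputs are cyclically permuted accordingly. Since $p_{m+1}$ is an interior marked point while the $z_j$ lie on the boundary, the wall $\theta_{m+1}=\arg(z_j)$ between consecutive arcs is not a degeneration of the domain and therefore contributes no boundary stratum of~\eqref{s1pearls}; consequently the signed count $\mathcal{OC}^{S^1}_m$ over the whole circle is the sum of the signed counts over the arcs, and summing over $i$ reproduces exactly the reduced Connes operator followed by $\mathcal{OC}_m$. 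As the cyclic open-closed map factors through the reduced complex, the correction term $\mathfrak{m}_0\otimes 1\otimes(-)$ in the contracting homotopy $\widetilde{s}$ plays no role here.

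The hard part will be the sign bookkeeping: one must verify that the orientation induced on each arc of $\mathcal{P}^{S^1}_m$ --- using the convention fixed above in which $S^1$ is given the opposite of its boundary orientation and the product the opposite of the product orientation --- matches the Koszul sign attached to the corresponding term of $B=(1-t)\widetilde{s}N$ in the conventions of Section~\ref{hochinvts}; I would carry this out by the same case analysis as in~\cite{Gan23}. A secondary point is to confirm that the only genuine boundary phenomena of the one-dimensional spaces~\eqref{s1pearls} are the standard strip-breakings and the norm degenerations analogous to~\eqref{normsto12} and~\eqref{normscoincide}, none of which occur over the interior of an arc, so that nothing is lost in passing from the full circle to the arcs. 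Finally, Lemma~\ref{sectors} combines with Lemma~\ref{hochs1} (applied with $m$ replaced by $m-1$) to give the equivariance relation~\eqref{equivariance} of Lemma~\ref{equivariancelemma}.
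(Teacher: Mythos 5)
Your proposal is correct and follows essentially the same route as the paper: both decompose $\mathcal{P}^{S^1}_{m,k,\ell}$ into sectors determined by the cyclic position of an auxiliary marked point's argument among the boundary marked points, identify each sector with a moduli space carrying one extra boundary marked point that must be the strict unit (you argue via the absence of a constraint, the paper via degree reasons --- these amount to the same thing), and defer the orientation bookkeeping to~\cite{Gan23}. The only cosmetic difference is that the paper indexes the sectors by $\arg(p_1)$ while you use $\arg(p_{m+1})$; the identification with the extra boundary marked point is via $p_{m+1}$ in both cases.
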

\begin{proof}
Let
\[ \mathcal{P}_{m,k+1,\ell,\tau_i}\]
to be the uncompactified moduli space of disks with $k+1$ cyclically ordered boundary marked points
\[z_1,\ldots,z_i,z_{0},z_{i+1},\ldots,z_k\]
along with $\ell$ interior marked points, $m$ auxiliary interior marked points, and an output interior marked point $p_{\out}$. The norms (on the unit disk representative) of the auxiliary marked points are required to satisfy
\[ 0<|p_1|<\cdots<|p_m|<\frac{1}{2}.\]
There is a bijection
\[ \tau_i\colon\mathcal{P}_{m,k+1,\ell,\tau_i}\to\mathcal{P}_{m,k,\ell}\]
given by cyclically permuting boundary labels. 

Now consider, for all $1\leq i \leq k$, the moduli spaces
\[ \mathcal{P}_{m,k,\ell}^{S^1_{i,i+1}} \]
which are the open subsets of $\mathcal{P}_{m,k,\ell}^{S^1}$ with the property that $\arg(p_1)$ lies between $\arg(z_i)$ and $\arg(z_{i+1})$, where the indices are taken mod $k$.

We also have an auxiliary-rescaling map
\[ \pi_{k+1}^i\colon\mathcal{P}_{m,k+1,\ell,\tau_i}\to\mathcal{P}^{S^1_{i,i+1}}_{m,k,\ell}\]
which places a marked point $p_{m+1}$ of norm $\frac{1}{2}$  with the line between $p_{\out}$ and $z_{k+1}$, and deletes $z_{k+1}$. Taking the union of these maps gives an orientation-preserving embedding
\[\coprod_i\mathcal{P}_{m,k+1,\ell,\tau_i}\xrightarrow{\coprod_i\pi^i_{k+1}}\coprod_i\mathcal{P}_{m,k,\ell}^{S^1_{i,i+1}}\hookrightarrow\mathcal{P}_{m,k,\ell}^{S^1}.\]
It is clear that image of this embedding covers all but a codimension $1$ subset of the target. Specifically, the complement of the image is the locus of disks for which $\arg(p_1) = \arg(z_i)$ for some $1\leq i\leq k$. 

This implies that all elements of the zero-dimensional moduli spaces~\eqref{ocpearls} can be taken to have disks at the output vertices whose domains lie in the image of this embedding, possibly after perturbing the Morse functions $f_L$ and $f_M$. By counting isolated pseudoholomorphic pearly trees with underlying bicolored metric ribbon trees of open-closed type and output vertex decorated by elements of moduli spaces of the form
\[ \mathcal{P}_{m,k+1,\ell,\tau_i}(\beta)\]
we can define operations $\mathcal{OC}_{m,k,\tau_i}$. More precisely, elements of this moduli space consist of pseudoholomorphic disks $(D^2,\partial D^2)\to(M,L)$ representing the class $\beta\in H_2(M,L;\mathbb{Z})$ whose domains lie in $\mathcal{P}_{m,k+1,\ell,\tau_i}$.

Hence, if we assume without loss of generality that $f_L$ has a unique minimum which represents the unit $1\in CM^*(L;R)$, there is an equality of chain-level operations
\begin{align*}
\mathcal{OC}^{S^1}_{m,k}(x_1\otimes\cdots\otimes x_k) &= \sum_{i=0}^{m-1}\mathcal{OC}_{m,k,\tau_i}(x_1\otimes \cdots\otimes x_i\otimes 1\otimes x_{i+1}\otimes\cdots\otimes x_k) \\
&= \sum_{i=0}^{m-1}\mathcal{OC}_{m,k+1}(1\otimes x_{i+1}\otimes\cdots\otimes x_k\otimes x_1\otimes\cdots\otimes x_i).
\end{align*}
In this identity, the inputs for $\mathcal{OC}_{m,k,\tau_i}$ at the marked point $z_0$ must be $1$ for degree reasons. The last equality holds because the bijections $\tau_i$ induce bijections between the relevant spaces of pearly trees. The result now follows from a sign analysis of the operations on the right hand side, of the sort carried out in~\cite{Gan23}.
\end{proof}
\begin{proof}[Proof of Lemma~\ref{equivariancelemma}]
This is an immediate consequence of Lemma~\ref{hochs1} and Lemma~\ref{sectors}.
\end{proof}
There is also a version of the cyclic open-closed map on the possibly bulk-deformed $A_{\infty}$-algebras
\[\widetilde{\mathcal{A}} = (CM^*(L\times[-1,1];R),\lbrace\widetilde{\mathfrak{m}}^{\widetilde{\gamma}}_k\rbrace_{k=0}^{\infty})\]
as defined in Section~\ref{cylinderobjects} using a path of almost complex structures $\underline{J} = \lbrace J_t\rbrace_{t\in[-1,1]}$. The definitions of these maps require time-dependent analogues of~\eqref{ocmodbeta} and~\eqref{s1modbeta}. Namely consider the moduli spaces
\begin{align}
\widetilde{\mathcal{P}}_{m,k,\ell}(\beta)\coloneqq\lbrace(u,t)\colon u\in\mathcal{P}_{m,k,\ell}(\beta;J_t) \label{ocmodbetacylinder} \\
\widetilde{\mathcal{P}}_{m,k,\ell}^{S^1}(\beta)\coloneqq\lbrace(u,t)\colon u\in\mathcal{P}^{S^1}_{m,k,\ell}(\beta;J_t). \label{s1modbetacylinder}
\end{align}
Given a tree $T$ of open-closed type, we can define open-closed pearly trees on cylinder objects.
\begin{definition}\label{ocpearlspacescylinder}
Let $\widetilde{x} = (\widetilde{x}_1,\ldots,{x}_k)$ be a sequence of input critical points in $\crit(\widetilde{f}_L)$, let $y = (\widetilde{y}_1,\ldots,\widetilde{y}_{\ell})$ be a sequence of input critical points in $\crit(\widetilde{f}_M)$, and let $\widetilde{y}_{\out}\in\crit(\widetilde{f}_M)$ be an output critical point. The \textit{time-dependent moduli spaces of open-closed pearly trees} of class $\beta\in H_2(M,L)$ are denoted
\begin{align*}
{\mathcal{P}}_{m}(\widetilde{x};\widetilde{y},\widetilde{y}_{\out};\beta)\\
{\mathcal{P}}_{m}^{S^1}(\widetilde{x};\widetilde{y},\widetilde{y}_{\out};\beta) 
\end{align*}
and are defined to be
\begin{align*}
\coprod_T\ev_T^{-1}\left(\prod_{j=1}^{\ell}W^u(y_j)\times\prod_{j=1}^k W^u(x_j)\times\prod_{e\in E^f_{\circ}(T)\sqcup E^f_{\bullet}(T)}G_e\times W^s(y_{\out})\right)
\end{align*}
where the output vertex $v_{\out}$ is associated an element of~\eqref{ocmodbetacylinder} or~\eqref{s1modbetacylinder}, respectively. Both of these spaces admit natural Gromov compactifications
\begin{align}
\overline{{\mathcal{P}}}_{m}(x;y,y_{\out};\beta) \label{ocpearlscylinder}\\
\overline{{\mathcal{P}}}_{m}^{S^1}(x;y,y_{\out};\beta). \label{s1pearlscylinder}
\end{align}
\end{definition}
As usual, we work under a regularity assumption on these time-dependent pearly moduli spaces.
\begin{assumption}\label{occylinderregularity}
The moduli spaces~\eqref{ocpearlscylinder} and~\eqref{s1pearlscylinder} of virtual dimension $\leq 1$ are compact orbifolds of the expected dimension.
\end{assumption}
Given this assumption, a discussion completely parallel to the one carried out above shows that we can construct the desired cyclic open-closed map on the cylinder.
\begin{lemma}
There exists a sequence of linear maps
\[ \widetilde{\mathcal{OC}}_m\colon CH_*(\widetilde{\mathcal{A}})\to CM_{n+1-*+2m}(M\times[-1,1]) \]
which satisfy
\[ \widetilde{\mathcal{OC}}_{m-1}\circ B+\widetilde{\mathcal{OC}}_m\circ b = 0\]
for all $m\geq0$. \qed
\end{lemma}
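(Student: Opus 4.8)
The plan is to carry out the construction of $\mathcal{OC}_m$ and $\mathcal{OC}^{S^1}_m$ verbatim, with the open-closed pearly moduli spaces~\eqref{ocpearls} and~\eqref{s1pearls} replaced by their time-dependent analogues~\eqref{ocpearlscylinder} and~\eqref{s1pearlscylinder} from Definition~\ref{ocpearlspacescylinder}, which are regular compact orbifolds of the expected dimension by Assumption~\ref{occylinderregularity}. Concretely, for a sequence $\widetilde{x}$ of inputs in $\crit(\widetilde{f}_L)$, a sequence $\widetilde{y}$ of inputs in $\crit(\widetilde{f}_M)$, and an output $\widetilde{y}_{\out}\in\crit(\widetilde{f}_M)$, one sets
\[ \widetilde{\mathfrak{oc}}^{\beta}_{m,k,\ell}(\widetilde{x};\widetilde{y})\coloneqq\sum_{\widetilde{y}_{\out}}(-1)^{\star_k}\hol_{\nabla}(\beta)\,\#|{\mathcal{P}}_m(\widetilde{x};\widetilde{y},\widetilde{y}_{\out};\beta)|\,\widetilde{y}_{\out} \]
and defines $\widetilde{\mathfrak{oc}}^{S^1,\beta}_{m,k,\ell}$ analogously from~\eqref{s1pearlscylinder} using the sign $\star^{S^1}_k$. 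Extending $R$-linearly, bulk-deforming by $\widetilde{\gamma}$, and summing over $\ell$ and $\beta$ exactly as in the closed case produces $R$-linear operations $\widetilde{\mathcal{OC}}_{m,k}$ and $\widetilde{\mathcal{OC}}^{S^1}_{m,k}$, which assemble into maps $\widetilde{\mathcal{OC}}_m,\widetilde{\mathcal{OC}}^{S^1}_m\colon CH_*(\widetilde{\mathcal{A}})\to CM_*(M\times[-1,1])$, with $\widetilde{\mathcal{OC}}_0$ the usual open-closed map of the pseudo-isotopy. Convergence is automatic because $\widetilde{\mathcal{A}}$ is gapped and filtered, and the target degree $n+1-*+2m$ is dictated by a dimension count, the shift by one from the closed case matching the one extra dimension of $M\times[-1,1]$.

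The next step is the time-dependent analogue of Lemma~\ref{ocpearlsboundary}: Gromov compactness and Assumption~\ref{occylinderregularity} show that the boundary of a one-dimensional moduli space~\eqref{ocpearlscylinder} is covered by exactly the three types of strata appearing there, namely (i) the usual pearly-tree breakings, in which an ordinary time-dependent pearly tree splits off --- now built from the operations $\widetilde{\mathfrak{m}}^{\widetilde{\gamma}}_k$ of $\widetilde{\mathcal{A}}$, hence contributing the Hochschild differential $b$ on $CH_*(\widetilde{\mathcal{A}})$; (ii) the locus where $|p_m|\to\tfrac12$, identified with~\eqref{s1pearlscylinder} for $m-1$ auxiliary marked points; and (iii) the loci where two consecutive auxiliary marked points acquire equal norm. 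As in the closed case, the norm constraints $0<|p_1|<\cdots<|p_m|<\tfrac12$ force all auxiliary marked points onto a single domain component of any Gromov limit, so no further strata arise; the dependence on the parameter $t\in[-1,1]$ is handled exactly as in the construction of the $A_{\infty}$-algebra $\widetilde{\mathcal{A}}$ in Section~\ref{cylinderobjects}.

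From this description one reads off the time-dependent version of Lemma~\ref{hochs1}, namely $\widetilde{\mathcal{OC}}^{S^1}_{m-1}+\widetilde{\mathcal{OC}}_m\circ b = 0$: summing the contributions of the strata (i)--(iii) gives $0 = \widetilde{\mathcal{OC}}^{S^1}_{m-1}+\widetilde{\mathcal{OC}}_m\circ b+\sum_{i=1}^{m-1}{}^{i,i+1}\widetilde{\mathcal{OC}}_m$, and each operation in the last sum vanishes because forgetting the auxiliary marked point $p_{i+1}$ defines a map ${\mathcal{P}}^{i,i+1}_m(\widetilde{x};\widetilde{y},\widetilde{y}_{\out};\beta)\to{\mathcal{P}}_{m-1}(\widetilde{x};\widetilde{y},\widetilde{y}_{\out};\beta)$ with one-dimensional fibres, so no such pearly tree is isolated. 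This forgetful map exists precisely because every disk is cut out by the single fixed path $\underline{J}$, just as in the closed case. Likewise, the time-dependent version of Lemma~\ref{sectors}, $\widetilde{\mathcal{OC}}^{S^1}_m = \widetilde{\mathcal{OC}}_m\circ B$, is proved by the same argument: one decomposes~\eqref{s1pearlscylinder} into the angular sectors $S^1_{i,i+1}$ according to where $\arg(p_1)$ lies relative to the boundary marked points, uses the auxiliary-rescaling maps $\pi^i_{k+1}$ and the relabelling bijections $\tau_i$, and uses that $\widetilde{f}_L$ has a unique minimum representing the strict unit of $\widetilde{\mathcal{A}}$, to recover the Connes operator $B$. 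Combining the two identities yields $\widetilde{\mathcal{OC}}_{m-1}\circ B+\widetilde{\mathcal{OC}}_m\circ b = 0$ for all $m\geq0$, with the convention $\widetilde{\mathcal{OC}}_{-1} = 0$.

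The only genuinely new bookkeeping --- and the step I expect to demand the most care --- is verifying that the $A_{\infty}$-operations appearing in the type-(i) breakings are literally the $\widetilde{\mathfrak{m}}^{\widetilde{\gamma}}_k$ of $\widetilde{\mathcal{A}}$, which amounts to tracking the Morse flow of $\widetilde{f}_L$ and $\widetilde{f}_M$ in the $t$-direction through the gluing and compactness analysis and confirming compatibility with the $A_{\infty}$-subalgebras $(CM^*(L;(f_L^{\mp 1},g_L^{\mp 1})),\mathfrak{m}^{\gamma^{\mp 1}}_k)$ attached to the ends $t=\pm1$. Once this is in place the signs $\star_k$ and $\star^{S^1}_k$, and the entire sign analysis, are exactly those of~\cite{Gan23}, and Lemmas~\ref{ocpearlsboundary}, \ref{hochs1} and~\ref{sectors} transfer to the cylinder without further change.
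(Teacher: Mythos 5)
Your proposal is correct and follows exactly the route the paper takes: the paper's proof consists of the single remark that, given Assumption~\ref{occylinderregularity}, the construction and the proofs of Lemmas~\ref{ocpearlsboundary}, \ref{hochs1}, and~\ref{sectors} carry over verbatim to the time-dependent moduli spaces of Definition~\ref{ocpearlspacescylinder}, which is precisely what you have spelled out. The only difference is that you have written out the parallel argument in full where the paper leaves it implicit.
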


\section{The open Gromov--Witten potential}\label{theogw} In this section, we explain how to construct the open Gromov--Witten potential using the $\infty$-inner product induced by the cyclic open-closed map. We have so far not discussed disks without marked points. Since we have defined the Lagrangian Floer theory using pearly configurations, the appropriate analogue of the inhomogeneous $\mathfrak{m}_{-1}$ as it appears in~\cite{F11} and~\cite{ST21} should of course be a count of pearly trees, not just of disks.
\subsection{Inhomogeneous terms}
Denote by $\mathcal{M}_{-1,\ell}(\beta;J)$ the moduli spaces of pseudoholomorphic disks $u\colon(D^2,\partial D^2)\to(M,L)$ in the class $\beta\in H_2(M,L;\mathbb{Z})$ with no boundary marked points and interior marked points labeled
\[ w_1,\ldots,w_{\ell}\]
in order for any integer $\ell\geq0$. There are evaluation maps \begin{align*}
\evi_j\colon\mathcal{M}_{-1,\ell}(\beta;J)\to M
\end{align*}
at the interior marked points. The pearly trees which contribute to the inhomogeneous term of our open Gromov--Witten potential should be trees with no outputs which take no inputs from the Morse cochain of $L$.
\begin{definition}
Let $T$ be a bicolored tree equipped with a metric, a ribbon structure, and a labeling of its vertices by by classes $\beta_v\in H_2(M,L)$ for $v\in E_{\circ}(T)$ and $\beta_v\in H_2(M)$ for $v\in E_{\bullet}(T)$. We say that $T$ is of \textit{inhomogeneous type} if $E_{\circ}(T)$ contains no semi-infinite edges, and if its oriented such that
\begin{itemize}
\item[•] all semi-infinite edges in $E_{\bullet}(T)$ are are incoming edges; and
\item[•] every disk vertex $v\in E_{\circ}(T)$ has at most one outgoing adjacent edge.
\end{itemize}
\end{definition}

\begin{figure}
\begin{tikzpicture}
\begin{scope}[scale = 0.75]
\node (N) at (0,0) {};
\node (E) at (0,4) {$y_{0}$};

\begin{scope}[very thick,decoration={
	markings,
	mark=at position 0.25 with {\arrow{>}}}]
\draw[postaction={decorate}, thick] (E) -- (N);
\end{scope}

\begin{scope}[very thick,decoration={
	markings,
	mark=at position 0.5 with {\arrow{>}}}]
\draw[postaction = {decorate}, thick] (3,4) -- (0,0);
\end{scope}

\fill[white] (0,0) circle (2cm);
\node[anchor = south] at (3,4) {$y_2$};
\node[anchor = south] at (6,4) {$y_1$};
\begin{scope}[very thick,decoration={
	markings,
	mark=at position 0.25 with {\arrow{>}}}]
\draw[postaction={decorate}, thick] (6,4) -- (6,0);
\end{scope}
\fill[white] (6,0) circle (2cm);

\draw (2,0) arc (0:180:2);
\draw (-2,0) arc (180:360:2 and 0.6);
\draw[dashed] (2,0) arc (0:180:2 and 0.6);

\draw (8,0) arc (0:180:2);
\draw (4,0) arc (180:360:2 and 0.6);
\draw[dashed] (8,0) arc (0:180:2 and 0.6);

\coordinate (A) at (2,0);
\coordinate (B) at (4,0);

\begin{scope}[very thick,decoration={
	markings,
	mark=at position 0.5 with {\arrow{>}}}]
\draw[postaction={decorate}, thick] (B) -- (A);
\end{scope}
\end{scope}
\end{tikzpicture}\caption{A pearly tree contributing to $\mathfrak{m}_{-1}$.}
\end{figure}
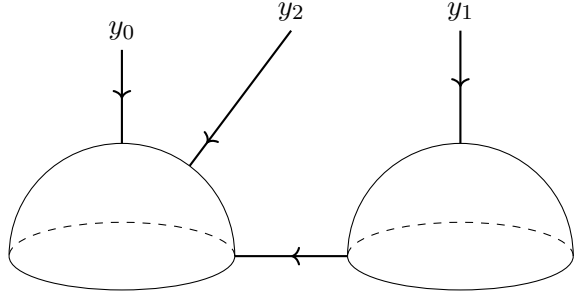
For each $T$ of inhomogeneous type, choose $f_M$-admissible and $f_L$-admissible Morse functions on each edge $E_{\bullet}(T)$ and $E_{\circ}(T)$, respectively.

\begin{definition}\label{pearlytreesm-1def}
Given input critical points $y_1,\ldots,y_{\ell}\in\crit(f_M)$, define
\begin{align}
\mathcal{M}_{-1}(y_1\otimes\cdots\otimes y_{\ell};\beta)\label{pearlytreesm-1}
\end{align}
to be the moduli space of all pearly trees with the given inputs whose domain is parametrized by a tree $T$ of inhomogeneous type.
\end{definition}
The precise definition of these moduli spaces also uses an evaluation map $\ev_T$ associated to $T$, but the submanifold that we pull back under this map does not contain any stable manifold factor (which would correspond to an output). We need a regularity assumption on these moduli spaces which is the analogue of Assumption~\ref{regularity}.
\begin{assumption}\label{m-1regularity}
The moduli spaces~\eqref{pearlytreesm-1} of virtual dimension $0$ are compact oriented zero-dimensional orbifolds.
\end{assumption}
This assumption lets us define operations by counting the elements of the zero-dimensional moduli spaces of this type.
\begin{definition}
Let $y_1,\ldots,y_{\ell}\in\crit(f_M)$ be a sequence of input critical points and $y_{\out}$ be a critical point of index $0$, with the property that~\eqref{pearlytreesm-1} is zero-dimensional, and set
\begin{align*}
\mathfrak{q}_{-1,\ell}^{\beta}(y_1\otimes\cdots\otimes y_{\ell}) = \sum_{y_{\out}\in\crit(f_M)}\hol_{\nabla}(\beta)\#|\mathcal{M}_{-1}(y_1\otimes\cdots\otimes y_{\ell};\beta)|\in R.
\end{align*}
Extend these to operations
\begin{align*}
\mathfrak{q}_{-1,\ell}\colon CM^*(M;Q)^{\otimes\ell}\to R
\end{align*}
by setting
\begin{align*}
\mathfrak{q}_{-1,\ell}\coloneqq\sum_{\beta\in H_2(M,L;\mathbb{Z})}\mathfrak{q}_{-1,\ell}^{\beta}.
\end{align*}
Finally, given a bulk parameter $\gamma\in CM^*(M)$, set
\begin{align}
\mathfrak{m}_{-1}^{\gamma}\coloneqq\sum_{\ell=0}^{\infty}\frac{1}{\ell!}\mathfrak{q}_{-1,\ell}(\gamma^{\otimes\ell}).
\end{align}
\end{definition}
Given a path $\underline{J} = \lbrace J_t\rbrace_{t\in[-1,1]}$ of almost complex structures, there are also moduli spaces
\[ \widetilde{\mathcal{M}}_{-1,\ell}(\beta;\underline{J})\coloneqq\lbrace(u,t)\colon u\in\mathcal{M}_{-1,\ell}(\beta;J_t)\rbrace \]
which have naturally defined evaluation maps at the interior marked points. Using these evaluation maps, one defines moduli spaces of pearly trees
\begin{align}
\mathcal{M}_{-1}(\widetilde{y}_1\otimes\cdots\otimes\widetilde{y}_{\ell};\beta;\underline{J})\label{pearlytreescylm-1}
\end{align}
with underlying tree $T$ of inhomogeneous type.
\begin{assumption}\label{m-1cylregularity}
The moduli spaces~\eqref{pearlytreescylm-1} of virtual dimension at most $1$ are compact orbifolds of the expected dimension.
\end{assumption}
We need regularity for $1$-dimensional time-dependent moduli spaces, since these arise in the proof of Theorem~\ref{truemain}, whereas the $1$-dimensional moduli spaces of ordinary pearly trees with no inputs do not. With this we define the time-dependent inhomogeneous terms $\widetilde{\mathfrak{m}}_{-1}^{\widetilde{\gamma}}$ in the obvious way.

If $T$ is a tree for which the planar part $T_{\circ}$ consists of a single vertex, the boundary of an associated pearly configuration can collapse to a point in $L$. We will now introduce notation for the count of such configurations.

For $\beta\in H_2(M;\mathbb{Z})$, define the moduli spaces of $\underline{J}$-holomorphic spheres
\[\widetilde{\mathcal{M}}_{\ell+1}(\beta;\underline{J})\coloneqq\lbrace(u,t)\colon u\in\mathcal{M}_{\ell+1}(\beta;J_t)\rbrace\]
and label the marked points on the domain $0,\ldots,\ell$. Consider bicolored metric ribbon trees $T$ for which $T_{\circ} = \emptyset$ with $\ell+1$ semi-infinite edges, labeled $e_{\out}^{\bullet},e_1^{\bullet},\ldots,e_{\ell}^{\bullet}$, where $e_{\out}^{\bullet}$ is outgoing all other semi-infinite edges are incoming. The combinatorially finite edges of $T$ are oriented so that they point to $e_0^{\bullet}$. Associate a class $\beta_v\in H_2(M;\mathbb{Z})$ to each vertex $v\in V(T) = V_{\bullet}(T)$. Given a sequence of input critical points $\widetilde{y}_1,\ldots,\widetilde{y}_{\ell}\in\crit(\widetilde{f}_M)$, we can define moduli spaces of pearly trees
\begin{equation}
\mathcal{M}(y_{\out},y_1,\ldots,y_{\ell};\beta;\underline{J})\label{GWmod}
\end{equation}
by assigning $\widetilde{f}_M$-admissible Morse functions to each edge of $T$ and using the evaluation maps associated to $T$. As is routine by now, we impose a regularity assumption on these moduli spaces.
\begin{assumption}\label{cylsphereregularity}
The moduli spaces~\eqref{GWmod} of virtual dimension $0$ are compact orbifolds of dimension $0$.
\end{assumption}
We set
\[ \widetilde{\mathfrak{q}}^{\beta}_{\emptyset,\ell}(y_1,\ldots,y_{\ell})\coloneqq\sum_{y_{\out}}\#|\mathcal{M}(y_{\out},y_1,\ldots,y_{\ell};\beta;\underline{J})|\cdot y_{\out}\]
where the sum is over all $y_{\out}$ such that~\eqref{GWmod} is $0$-dimensional. Extend these to operations
\begin{align*}
\widetilde{\mathfrak{q}}_{\emptyset,\ell}\colon CM^*(M\times[-1,1];Q)^{\otimes\ell}\to CM^*(M\times[-1,1];Q)
\end{align*}
in the usual way. The Lagrangian embedding $\iota\colon L\hookrightarrow M$ lets us pull back the values of $\widetilde{\mathfrak{q}}_{\emptyset,\ell}$ to $CM^*(L\times[-1,1])$, the result of which is denoted
\begin{equation}
\iota^*\widetilde{\mathfrak{q}}_{\emptyset,\ell}(\widetilde{y}_1\otimes\cdots\otimes\widetilde{y}_{\ell}). \label{pullbackspherecount}
\end{equation}
By the construction of the Morse function $\widetilde{f}_L$ used to construct $CM^*(L\times[-1,1])$, we can assume that $CM^*(L\times[-1,1];R)$ has a single generator in degree $n+1$. This is because we can take the Morse function $f_0$ on $L$ to have a single maximum without loss of generality. The element $\widetilde{GW}\in R$ is defined to be the coefficient of this degree $n+1$ generator in the Morse cochain~\eqref{pullbackspherecount}.

\subsection{Wall-crossing}
Having defined the inhomogeneous terms $\mathfrak{m}_{-1}^{\gamma}$ in our setting, we can now define the open Gromov--Witten potential as it is defined in the cyclic case. The higher order terms of the open Gromov--Witten potential will be as in Definition~\ref{inftycyclicpotential}. The $\infty$-inner product we use is induced from the trace map
\[ CC_{*}^{+,\red}\xrightarrow{\mathcal{OC}} CM_{*-n}(M;R)\otimes_R R((u))/u R[[u]]\to R\]
where the last map projects to the $u^0$-factor and then projects to $R = H_*(\pt;R)$. This can be thought of as a positive cyclic cocycle, which induces a negative cyclic cohomology class by Lemma~\ref{hciso}, and in turn an $\infty$-inner product by Lemma~\ref{cocycletohom}.
\begin{definition}\label{infinityogwpotential}
For a Lagrangian submanifold $L\subset M$ with a flat $GL(1,\Bbbk)$-connection $\nabla$ and a bulk parameter $\gamma$, let $\mathcal{A} = (CF^*(L;R),\lbrace\mathfrak{m}^{\gamma}_k\rbrace_{k=0}^{\infty})$ be the bulk-deformed pearly $A_{\infty}$-algebra, and let $\phi\colon\mathcal{A}_{\Delta}\to\mathcal{A}^{\vee}$ denote the $\infty$-inner product induced by the cyclic open-closed map. The \textit{$\infty$-open Gromov--Witten potential} of $(L,\nabla)$ is the function $\Phi\colon\mathcal{MC}(\mathcal{A})\to H_0(M;R)$ defined by the convergent power series
\begin{align}
\Phi(b)\coloneqq\mathfrak{m}_{-1}^{\gamma}+\sum_{N=0}^{\infty}\sum_{p+q+k = N}\frac{1}{N+1}\phi_{p,q}(b^{\otimes p}\otimes\underline{\mathfrak{m}^{\gamma}_k(b^{\otimes k})}\otimes b^{\otimes q})(b).
\end{align}
\end{definition}
To show that this choice of inhomogeneous term is appropriate, we need to verify that $\Phi(b)$ has the expected behavior under variations of the almost complex structure $J$ used to construct the open-closed map and the $A_{\infty}$-operations. For this purpose, we need a notion of gauge-equivalence of weak bounding cochains which is compatible with the $A_{\infty}$-structures we have constructed on cylinder objects, adapted from~\cite[Definition 3.12]{ST21}.
\begin{definition}\label{gauge-equivalence}
Suppose that we have a bulk parameter $\widetilde{\gamma}\in\widetilde{\mathcal{A}} = CM^*(M\times[-1,1];Q)$, where curved the $A_{\infty}$-structure is defined using $\underline{J}$, and a class $\widetilde{b}\in\widetilde{\mathcal{A}}$ of degree $|b| = 1$. Further assume that there is a constant $c\in R$ such that
\begin{align*}
\widetilde{\mathfrak{m}}_0^b = c\cdot 1.
\end{align*}
Then the pairs $(b_{-1},\gamma_{-1})$ and $(b_1,\gamma_1)$ obtained by restricting to $L\times\lbrace\mp 1\rbrace$ and $M\times\lbrace\mp 1\rbrace$ are said to be gauge-equivalent.
\end{definition}
The rest of this section is occupied by the proof of our main result.
\begin{theorem}[Wall crossing formula]\label{truemain}
Let $\underline{J} = \lbrace J_t\rbrace_{t\in [-1,1]}$ be a path of almost complex structures satisfying Assumption~\ref{cylregularity}, and suppose we are given gauge-equivalent pairs $(b_{\mp 1},\gamma_{\mp 1})$ which are gauge-equivalent in the sense of Definition~\ref{gauge-equivalence}. Then the open Gromov--Witten potentials defined with respect to the almost complex structures $J_{-1}$ and $J_1$ and Morse functions $f^{-1}_L$ and $f^{1}_L$ satisfy
\[ \Phi_{-1}(b_{-1}) = \Phi_1(b_1)+\widetilde{GW}. \]
\end{theorem}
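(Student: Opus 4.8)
The plan is to run the construction of the $\infty$-open Gromov--Witten potential on the cylinder $A_\infty$-algebra $\widetilde{\mathcal A}=(CM^*(L\times[-1,1];R),\{\widetilde{\mathfrak m}_k^{\widetilde\gamma}\})$ of Section~\ref{cylinderobjects}, together with the cylinder cyclic open--closed map $\widetilde{\mathcal{OC}}$ and its $S^1$-equivariance from Section~\ref{thecyclicocmap}, the induced $\infty$-inner product $\widetilde\phi\colon\widetilde{\mathcal A}_\Delta\to\widetilde{\mathcal A}^\vee$ (obtained via Lemmas~\ref{hciso} and~\ref{cocycletohom}), and the time-dependent inhomogeneous term $\widetilde{\mathfrak m}_{-1}^{\widetilde\gamma}$. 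By Definition~\ref{gauge-equivalence}, the hypothesis of the theorem is precisely the datum of a weak bounding cochain $\widetilde b\in\widetilde{\mathcal A}$ restricting to $b_{\mp1}$ on the sub-$A_\infty$-algebras $CM^*(L;(f_L^{\mp1},g_L^{\mp1}))$, so the formula of Definition~\ref{infinityogwpotential} applied to $\widetilde b$ yields a cylinder potential
\[
\widetilde\Phi(\widetilde b)\coloneqq\widetilde{\mathfrak m}_{-1}^{\widetilde\gamma}+\sum_{N=0}^{\infty}\sum_{p+q+k=N}\frac{1}{N+1}\widetilde\phi_{p,q}\bigl(\widetilde b^{\otimes p}\otimes\underline{\widetilde{\mathfrak m}_k^{\widetilde\gamma}(\widetilde b^{\otimes k})}\otimes\widetilde b^{\otimes q}\bigr)(\widetilde b),
\]
which will serve as the organizing device for the argument.

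The identity to be proved, $\Phi_{-1}(b_{-1})-\Phi_1(b_1)-\widetilde{GW}=0$, will be obtained by writing its left-hand side as the signed count of the boundary points of the compact one-dimensional moduli spaces of cylinder pearly trees arising in the construction of $\widetilde\Phi(\widetilde b)$ — those of open--closed type whose output vertex is decorated by a disk from~\eqref{ocmodbetacylinder} or~\eqref{s1modbetacylinder}, and those of inhomogeneous type — evaluated against the appropriate powers of $\widetilde b$ and $\widetilde\gamma$. The purely algebraic bookkeeping (the combinatorial coefficients $\tfrac1{N+1}$ and $\tfrac1{\ell!}$, the $S^1$-equivariance identity of Lemma~\ref{equivariancelemma}, and the weak cyclic symmetry of $\widetilde\phi$ from Lemma~\ref{weakcyclic}) is arranged exactly as in the gauge-invariance computation of Section~\ref{hochinvts} and in~\cite{F11},~\cite{ST21}; the new content is the geometry of the cylinder.

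The codimension-one strata of these one-dimensional moduli spaces fall into four families. First, the strata consisting of configurations supported over the ends $t=-1$, resp.\ $t=+1$, of the interval: there the moduli spaces reduce by construction to the pearly trees defining the open--closed map, the operations $\mathfrak m_k$, and the inhomogeneous term of the $t=\mp1$ theory, while $\widetilde b$ restricts to $b_{\mp1}$, so these strata assemble into $\Phi_{-1}(b_{-1})$ and $-\Phi_1(b_1)$. Second, breaking of a Morse flow line joining two components, which cancels against the curved $A_\infty$-relations, the $A_\infty$-bimodule homomorphism relations for $\widetilde\phi$, and Lemma~\ref{equivariancelemma}, using the combinatorics of Lemma~\ref{weakcyclic} just as in the cyclic case. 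Third, a disk bubbling off at a boundary node, which is governed by the weak Maurer--Cartan equation $\widetilde{\mathfrak m}_0^{\widetilde b}=c\cdot1$ and cancels with the help of the inhomogeneous term $\widetilde{\mathfrak m}_{-1}^{\widetilde\gamma}$ and the strict unitality properties of $\widetilde{\mathfrak m}_k$ and $\widetilde{\mathcal{OC}}$. Fourth, the output disk component of a tree of open--closed type collapsing to a point of $L$ while a tree of $\underline J$-holomorphic spheres bubbles off and meets $L$ at that point; after integrating the position of the collapsed boundary point over $L$, this contributes precisely $\sum_\ell\tfrac1{\ell!}\iota^*\widetilde{\mathfrak q}_{\emptyset,\ell}(\widetilde\gamma^{\otimes\ell})$, whose coefficient on the top-degree generator of $CM^*(L\times[-1,1];R)$ is $\widetilde{GW}$ by definition. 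Summing the four families to zero gives the theorem. I expect the main obstacle to be the sign and orientation analysis in the fourth family — reconciling the orientation of the collapsed-disk stratum induced by the Gromov compactification with the chosen orientations of the sphere moduli spaces~\eqref{GWmod} and with the fundamental class of $L$ — together with the verification in the third family that the $c\cdot1$ terms genuinely cancel through $\widetilde{\mathcal{OC}}$, since the interaction between the strict unit and the open--closed map has no counterpart in the purely algebraic argument of Section~\ref{hochinvts}.
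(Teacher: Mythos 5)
Your outline follows the same strategy as the paper: analyze the codimension-one degenerations of the one-dimensional time-dependent moduli spaces of open-closed and inhomogeneous pearly trees, convert the $t=\pm1$ strata into the difference of potentials (the paper implements this by pushing forward along $\pi\colon M\times[-1,1]\to[-1,1]$ and using that $\pi_*$ is a chain map, rather than by forming a cylinder potential, but the computation is the same), reorganize the disk-breaking strata using Lemma~\ref{weakcyclic} and the Maurer--Cartan equation, and extract $\widetilde{GW}$ from sphere bubbling. There is, however, a genuine gap exactly at the point you flag as ``the main obstacle'' and leave unresolved. After applying Lemma~\ref{weakcyclic} and the weak Maurer--Cartan equation, one is left with the residual term
\[
\sum_{p,q\geq0}\widetilde{\phi}_{p,q}\bigl(\widetilde{b}^{\otimes p}\otimes\underline{(\widetilde{\mathfrak{m}}_0^{\widetilde{b}}-c\cdot 1)}\otimes\widetilde{b}^{\otimes q}\bigr)(\widetilde{\mathfrak{m}}_0^{\widetilde{b}}-c\cdot 1),
\]
and because $\widetilde{\phi}$ is not strictly cyclic, strict unitality alone does not make this vanish or turn it into anything geometric. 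The paper disposes of it in three steps that your proposal does not supply: (i) Lemma~\ref{KSanalogue} --- and this is precisely where the characteristic-zero hypothesis enters --- forces the terms with $p>0$ or $q>0$ to vanish; (ii) Lemma~\ref{homvstrace} converts the surviving term $\widetilde{\phi}_{0,0}(\underline{\widetilde{\mathfrak{m}}_0^{\widetilde{b}}})(\widetilde{\mathfrak{m}}_0^{\widetilde{b}})$ into the trace $\mathcal{OC}_0(\widetilde{\mathfrak{m}}_2(\widetilde{\mathfrak{m}}_0^{\widetilde{b}},\widetilde{\mathfrak{m}}_0^{\widetilde{b}}))$; (iii) a degree argument (the product is of top degree, so only constant disks contribute to $\widetilde{\mathfrak{m}}_2$) identifies that trace with the boundary stratum of the inhomogeneous moduli spaces~\eqref{pearlytreescylm-1} in which the configuration breaks into two input-less trees. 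Without (i)--(iii) the identity does not close up, so this step needs to be filled in rather than deferred.

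A second, smaller inaccuracy: you attribute the $\widetilde{GW}$ contribution to a degeneration of trees of open-closed type. In the paper it arises only from the boundary of the inhomogeneous moduli spaces~\eqref{pearlytreescylm-1}: the boundary of a disk component can collapse to a point of $L$, producing a configuration of spheres meeting $L$, only when $T_{\circ}$ is a single vertex with no boundary inputs. When boundary marked points constrained by unstable manifolds are present, this degeneration is of higher codimension and does not contribute to the count.
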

\begin{proof}
Let $\phi^{\pm 1}$ denote the $\infty$-inner products on $CM^*(L;R;f_L^{\pm 1})$ constructed from the cyclic open-closed maps $\mathcal{OC}^{\pm 1}$ defined using the almost complex structures $J_{\pm 1}$, and let $\widetilde{\phi}$ denote the $\infty$-inner product on $CM^*(L\times[-1,1];R)$ defined using $\underline{J}$. Additionally, let $\widetilde{b}$ and $\widetilde{\gamma}$ be a bounding cochain and bulk parameter on $CM^*(L\times[-1,1];R)$ realizing the gauge-equivalence between $b_{-1}$ and $b_1$.

Let $\pi\colon M\times[-1,1]\to[-1,1]$ denote the projection map, and let 
\[\pi_*\colon(CM_*(M\times[-1,1]),\partial)\to(CM_*([-1,1]),\partial)\]
denote the induced map on Morse cochains, where the Morse compex on $[-1,1]$ is defined using the Morse function $\rho$ on $(-1-\epsilon,1+\epsilon)$. Since this is a chain map, it follows that
\begin{align}\label{chaineqn}
\partial\pi_*(\widetilde{\mathcal{OC}}_0(\widetilde{b}^{\otimes p}\otimes\widetilde{\mathfrak{m}}_k(\widetilde{b}^{\otimes k})\otimes\widetilde{b}^{\otimes q}\otimes\widetilde{b}) = \pi_*(\partial\widetilde{\mathcal{OC}}(\widetilde{b}^{\otimes p}\otimes\widetilde{\mathfrak{m}}_k(\widetilde{b}^{\otimes k})\otimes\widetilde{b}^{\otimes q}\otimes\widetilde{b}))
\end{align}
where the expression on the left hand side can be written as
\begin{align}
&\partial\pi_*(\widetilde{\mathcal{OC}}_0(\widetilde{b}^{\otimes p}\otimes\widetilde{\mathfrak{m}}_k(\widetilde{b}^{\otimes k})\otimes\widetilde{b}^{\otimes q}\otimes\widetilde{b}) \nonumber \\
&= \mathcal{OC}_0^{1}(b_{1}^{\otimes p}\otimes\mathfrak{m}_k^{1}(b_{1}^{\otimes k})\otimes b_{1}^{\otimes q}\otimes b_{1}) - \mathcal{OC}_0^{-1}(b_{-1}^{\otimes p}\otimes\mathfrak{m}_k^{-1}(b_{-1}^{\otimes k})\otimes b_{-1}^{\otimes q}\otimes b_{-1}) \label{diff}
\end{align}
by construction of the chain map $\pi_*$.
\begin{figure}
\begin{tikzpicture}

\node (N) at (0,0) {};
\node (E) at (0,4) {$e_{\pm 1}$};

\begin{scope}[very thick,decoration={
	markings,
	mark=at position 0.75 with {\arrow{>}}}]
\draw[postaction={decorate}, thick] (N) -- (E);
\end{scope}
\fill[white] (0,0) circle (2cm);

\draw (2,0) arc (0:180:2);
\draw (-2,0) arc (180:360:2 and 0.6);
\draw[dashed] (2,0) arc (0:180:2 and 0.6);

\coordinate (A) at (270: 2 and 0.6);
\node[anchor = north] at (270: 2 and 0.6) {$\widetilde{\mathfrak{m}}_k(\widetilde{b}^{\otimes k})$};
\draw[fill = white] (A) circle (2pt);

\coordinate (B) at (225: 2 and 0.6);
\node[anchor = north] at (225: 2 and 0.6) {$\widetilde{b}$};
\draw[fill = black] (B) circle (2pt);

\coordinate (C) at (315:2 and 0.6);
\node[anchor = north] at (315: 2 and 0.6) {$\widetilde{b}$};
\draw[fill = black] (C) circle (2pt);

\coordinate (D) at (90:2 and 0.6);
\draw[fill = white] (D) circle (2pt);
\node[anchor = south] at (90:2 and 0.6) {$\widetilde{b}$};

\coordinate (E) at (45:2 and 0.6);
\draw[fill = black] (E) circle (2pt);
\node[anchor = south] at (45:2 and 0.6) {$\widetilde{b}$};

\coordinate (F) at (135:2 and 0.6);
\draw[fill = black] (F) circle (2pt);
\node[anchor = south] at (135:2 and 0.6) {$\widetilde{b}$};
\end{tikzpicture}\caption{Elements of~\eqref{1dimtruemain}. The marked points corresponding to the input and output of $\widetilde{\phi}$ are the white dots.}
\end{figure}
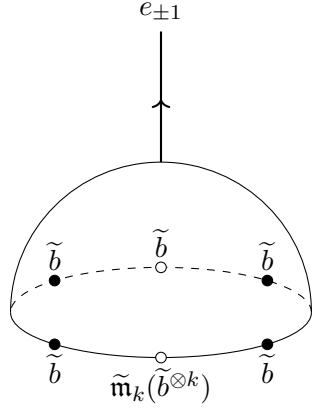

On the other hand, consider the one-dimensional moduli spaces of trees with underlying domain of open-closed type
\begin{align}
\overline{\mathcal{P}}_{0,p+q+1,\ell}(\widetilde{b}^{\otimes p}\otimes\widetilde{\mathfrak{m}}_k(\widetilde{b}^{\otimes k})\otimes\widetilde{b}^{\otimes q}\otimes\widetilde{b};e_{\pm 1}) . \label{1dimtruemain}
\end{align}

\begin{figure}
\begin{tikzpicture}

\node (N) at (0,0) {};
\node (E) at (0,4) {};
\node (S) at (4,4) {$e_{\pm 1}$};

\begin{scope}[very thick,decoration={
	markings,
	mark=at position 0.75 with {\arrow{>}}}]
\draw[postaction={decorate}, thick] (N) -- (E);
\end{scope}

\fill[black] (0,4) circle (3pt);

\begin{scope}[very thick,decoration={
	markings,
	mark=at position 0.5 with {\arrow{>}}}]
\draw[postaction={decorate}, thick] (E) -- (S);
\end{scope}

\fill[white] (0,0) circle (2cm);

\draw (2,0) arc (0:180:2);
\draw (-2,0) arc (180:360:2 and 0.6);
\draw[dashed] (2,0) arc (0:180:2 and 0.6);

\coordinate (A) at (270: 2 and 0.6);
\node[anchor = north] at (270: 2 and 0.6) {$\widetilde{\mathfrak{m}}_k(\widetilde{b}^{\otimes k})$};
\draw[fill = white] (A) circle (2pt);

\coordinate (B) at (225: 2 and 0.6);
\node[anchor = north] at (225: 2 and 0.6) {$\widetilde{b}$};
\draw[fill = black] (B) circle (2pt);

\coordinate (C) at (315:2 and 0.6);
\node[anchor = north] at (315: 2 and 0.6) {$\widetilde{b}$};
\draw[fill = black] (C) circle (2pt);

\coordinate (D) at (90:2 and 0.6);
\draw[fill = white] (D) circle (2pt);
\node[anchor = south] at (90:2 and 0.6) {$\widetilde{b}$};

\coordinate (E) at (45:2 and 0.6);
\draw[fill = black] (E) circle (2pt);
\node[anchor = south] at (45:2 and 0.6) {$\widetilde{b}$};

\coordinate (F) at (135:2 and 0.6);
\draw[fill = black] (F) circle (2pt);
\node[anchor = south] at (135:2 and 0.6) {$\widetilde{b}$};
\end{tikzpicture}\caption{Boundary stratum of~\eqref{1dimtruemain} involving breaking on the output edge.}~\label{intbreakstrata}
\end{figure}
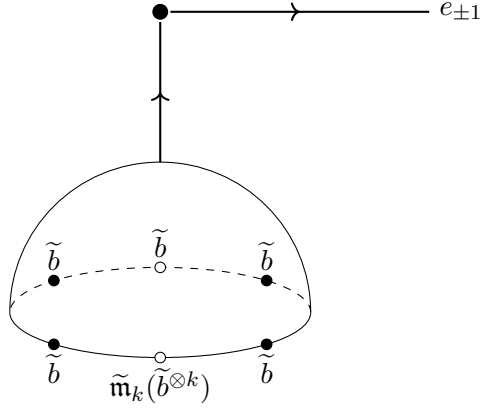
Examining the boundary strata of these spaces, we see that the boundary components where the output edge breaks into a broken negative gradient flow line in $M\times[-1,1]$ (cf.~\eqref{intbreakstrata}) contributes to the Morse differential of the open-closed map, i.e.
\[ \partial\widetilde{\mathcal{OC}}(\widetilde{b}^{\otimes p}\otimes\widetilde{\mathfrak{m}}_k(\widetilde{b}^{\otimes k})\otimes\widetilde{b}^{\otimes q}\otimes\widetilde{b}).\]
The other boundary strata involve breakings of gradient flow lines on $L\times[-1,1]$ or of pseudoholomorphic disks into nodal disks with boundary on $L\times[-1,1]$. These are represented schematically in Figures~\eqref{ksplitstrata},~\eqref{pqsplitstrata},~\eqref{outputstrata},  and~\eqref{cancelingstrata}. There is a parallel description of the moduli spaces
\begin{align}
\overline{\mathcal{P}}_{0,p+q+1,\ell}(\widetilde{b}^{\otimes p}\otimes\widetilde{b}\otimes\widetilde{b}^{\otimes q}\otimes\widetilde{\mathfrak{m}}_k(\widetilde{b}^{\otimes k});e_{\pm 1}) . \label{1dimtruemainalt}
\end{align}

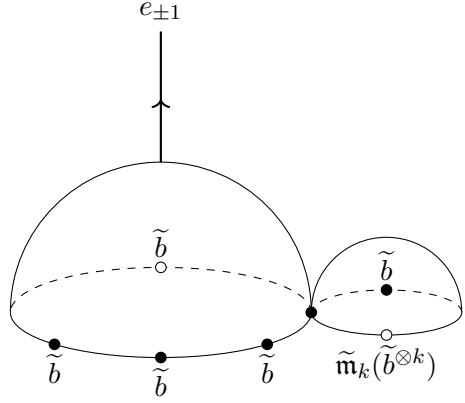
\begin{figure}
\begin{tikzpicture}
\begin{scope}
\node (N) at (0,0) {};
\node (E) at (0,4) {$e_{\pm 1}$};

\begin{scope}[very thick,decoration={
	markings,
	mark=at position 0.75 with {\arrow{>}}}]
\draw[postaction={decorate}, thick] (N) -- (E);
\end{scope}
\fill[white] (0,0) circle (2cm);

\draw (2,0) arc (0:180:2);
\draw (-2,0) arc (180:360:2 and 0.6);
\draw[dashed] (2,0) arc (0:180:2 and 0.6);

\begin{scope}[xshift = 3cm]
\draw (1,0) arc (0:180: 1 and 1);
\draw (-1,0) arc (180:360:1 and 0.3);
\draw[dashed] (1,0) arc (0:180:1 and 0.3);
\end{scope}

\coordinate (A) at (270: 2 and 0.6);
\node[anchor = north] at (270: 2 and 0.6) {$\widetilde{b}$};
\draw[fill = black] (A) circle (2pt);

\coordinate (B) at (225: 2 and 0.6);
\node[anchor = north] at (225: 2 and 0.6) {$\widetilde{b}$};
\draw[fill = black] (B) circle (2pt);

\coordinate (C) at (315:2 and 0.6);
\node[anchor = north] at (315: 2 and 0.6) {$\widetilde{b}$};
\draw[fill = black] (C) circle (2pt);

\coordinate (D) at (90:2 and 0.6);
\draw[fill = white] (D) circle (2pt);
\node[anchor = south] at (90:2 and 0.6) {$\widetilde{b}$};

\coordinate (G) at (0: 2 and 0.6);
\draw[fill = black] (G) circle (2pt);

\begin{scope}[xshift = 3cm]
\coordinate (E) at (90:1 and 0.3);
\draw[fill = black] (E) circle (2pt);
\node[anchor = south] at (90:1 and 0.3) {$\widetilde{b}$};

\coordinate (F) at (270:1 and 0.3);
\draw[fill = white] (F) circle (2pt);
\node[anchor = north] at (270:1 and 0.3) {$\widetilde{\mathfrak{m}}_k(\widetilde{b}^{\otimes k})$};
\end{scope}
\end{scope}
\end{tikzpicture}
\caption{Elements of the boundary stratum contributing to term~\eqref{ksplit}.}\label{ksplitstrata}
\end{figure}
 
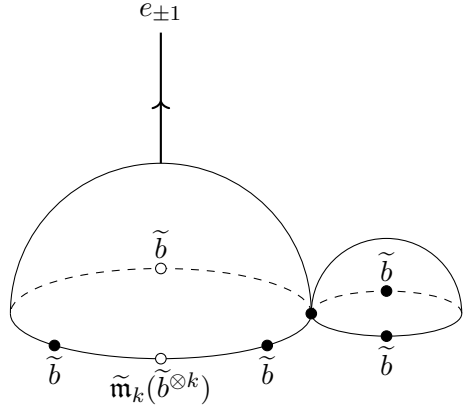
\begin{figure}
\begin{tikzpicture}
\begin{scope}
\node (N) at (0,0) {};
\node (E) at (0,4) {$e_{\pm 1}$};

\begin{scope}[very thick,decoration={
	markings,
	mark=at position 0.75 with {\arrow{>}}}]
\draw[postaction={decorate}, thick] (N) -- (E);
\end{scope}
\fill[white] (0,0) circle (2cm);

\draw (2,0) arc (0:180:2);
\draw (-2,0) arc (180:360:2 and 0.6);
\draw[dashed] (2,0) arc (0:180:2 and 0.6);

\begin{scope}[xshift = 3cm]
\draw (1,0) arc (0:180: 1 and 1);
\draw (-1,0) arc (180:360:1 and 0.3);
\draw[dashed] (1,0) arc (0:180:1 and 0.3);
\end{scope}

\coordinate (A) at (270: 2 and 0.6);
\node[anchor = north] at (270: 2 and 0.6) {$\widetilde{\mathfrak{m}}_k(\widetilde{b}^{\otimes k})$};
\draw[fill = white] (A) circle (2pt);

\coordinate (B) at (225: 2 and 0.6);
\node[anchor = north] at (225: 2 and 0.6) {$\widetilde{b}$};
\draw[fill = black] (B) circle (2pt);

\coordinate (C) at (315:2 and 0.6);
\node[anchor = north] at (315: 2 and 0.6) {$\widetilde{b}$};
\draw[fill = black] (C) circle (2pt);

\coordinate (D) at (90:2 and 0.6);
\draw[fill = white] (D) circle (2pt);
\node[anchor = south] at (90:2 and 0.6) {$\widetilde{b}$};

\coordinate (G) at (0: 2 and 0.6);
\draw[fill = black] (G) circle (2pt);

\begin{scope}[xshift = 3cm]
\coordinate (E) at (90:1 and 0.3);
\draw[fill = black] (E) circle (2pt);
\node[anchor = south] at (90:1 and 0.3) {$\widetilde{b}$};

\coordinate (F) at (270:1 and 0.3);
\draw[fill = black] (F) circle (2pt);
\node[anchor = north] at (270:1 and 0.3) {$\widetilde{b}$};
\end{scope}
\end{scope}
\end{tikzpicture}
\caption{Boundary strata contributing to terms~\eqref{psplit} and~\eqref{qsplit}.}\label{pqsplitstrata}
\end{figure}

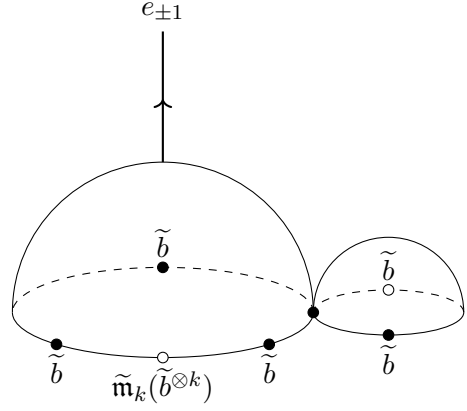
\begin{figure}
\begin{tikzpicture}
\begin{scope}
\node (N) at (0,0) {};
\node (E) at (0,4) {$e_{\pm 1}$};

\begin{scope}[very thick,decoration={
	markings,
	mark=at position 0.75 with {\arrow{>}}}]
\draw[postaction={decorate}, thick] (N) -- (E);
\end{scope}
\fill[white] (0,0) circle (2cm);

\draw (2,0) arc (0:180:2);
\draw (-2,0) arc (180:360:2 and 0.6);
\draw[dashed] (2,0) arc (0:180:2 and 0.6);

\begin{scope}[xshift = 3cm]
\draw (1,0) arc (0:180: 1 and 1);
\draw (-1,0) arc (180:360:1 and 0.3);
\draw[dashed] (1,0) arc (0:180:1 and 0.3);
\end{scope}

\coordinate (A) at (270: 2 and 0.6);
\node[anchor = north] at (270: 2 and 0.6) {$\widetilde{\mathfrak{m}}_k(\widetilde{b}^{\otimes k})$};
\draw[fill = white] (A) circle (2pt);

\coordinate (B) at (225: 2 and 0.6);
\node[anchor = north] at (225: 2 and 0.6) {$\widetilde{b}$};
\draw[fill = black] (B) circle (2pt);

\coordinate (C) at (315:2 and 0.6);
\node[anchor = north] at (315: 2 and 0.6) {$\widetilde{b}$};
\draw[fill = black] (C) circle (2pt);

\coordinate (D) at (90:2 and 0.6);
\draw[fill = black] (D) circle (2pt);
\node[anchor = south] at (90:2 and 0.6) {$\widetilde{b}$};

\coordinate (G) at (0: 2 and 0.6);
\draw[fill = black] (G) circle (2pt);

\begin{scope}[xshift = 3cm]
\coordinate (E) at (90:1 and 0.3);
\draw[fill = white] (E) circle (2pt);
\node[anchor = south] at (90:1 and 0.3) {$\widetilde{b}$};

\coordinate (F) at (270:1 and 0.3);
\draw[fill = black] (F) circle (2pt);
\node[anchor = north] at (270:1 and 0.3) {$\widetilde{b}$};
\end{scope}
\end{scope}
\end{tikzpicture}
\caption{Boundary stratum contributing to the term~\eqref{output}.}\label{outputstrata}
\end{figure}

\begin{figure}
\begin{tikzpicture}
\begin{scope}
\node (N) at (0,0) {};
\node (E) at (0,4) {$e_{\pm 1}$};

\begin{scope}[very thick,decoration={
	markings,
	mark=at position 0.75 with {\arrow{>}}}]
\draw[postaction={decorate}, thick] (N) -- (E);
\end{scope}
\fill[white] (0,0) circle (2cm);

\draw (2,0) arc (0:180:2);
\draw (-2,0) arc (180:360:2 and 0.6);
\draw[dashed] (2,0) arc (0:180:2 and 0.6);

\begin{scope}[xshift = 3cm]
\draw (1,0) arc (0:180: 1 and 1);
\draw (-1,0) arc (180:360:1 and 0.3);
\draw[dashed] (1,0) arc (0:180:1 and 0.3);
\end{scope}

\coordinate (A) at (270: 2 and 0.6);
\node[anchor = north] at (270: 2 and 0.6) {$\widetilde{b}$};
\draw[fill = black] (A) circle (2pt);

\coordinate (B) at (225: 2 and 0.6);
\node[anchor = north] at (225: 2 and 0.6) {$\widetilde{b}$};
\draw[fill = black] (B) circle (2pt);

\coordinate (C) at (315:2 and 0.6);
\node[anchor = north] at (315: 2 and 0.6) {$\widetilde{b}$};
\draw[fill = black] (C) circle (2pt);

\coordinate (D) at (90:2 and 0.6);
\draw[fill = black] (D) circle (2pt);
\node[anchor = south] at (90:2 and 0.6) {$\widetilde{b}$};

\coordinate (G) at (0: 2 and 0.6);
\draw[fill = black] (G) circle (2pt);

\begin{scope}[xshift = 3cm]
\coordinate (E) at (90:1 and 0.3);
\draw[fill = white] (E) circle (2pt);
\node[anchor = south] at (90:1 and 0.3) {$\widetilde{b}$};

\coordinate (F) at (270:1 and 0.3);
\draw[fill = white] (F) circle (2pt);
\node[anchor = north] at (270:1 and 0.3) {$\widetilde{\mathfrak{m}}_k(\widetilde{b}^{\otimes k})$};
\end{scope}
\end{scope}
\end{tikzpicture}
\caption{Canceling boundary strata.}\label{cancelingstrata}
\end{figure}
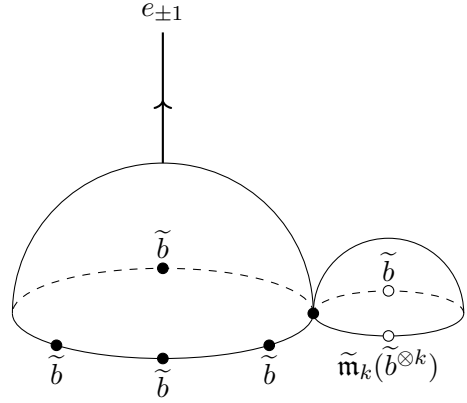

By~\eqref{diff} and the definition of the $\infty$-inner product, it follows that
\begin{align}
&\Phi'_1(b_1)-\Phi'_{-1}(b_{-}) \\
&=\sum_{N=0}^{\infty}\sum_{\substack{p+q+k  = N \\ k_1+k_2 = k+1}}\frac{1}{N+1}\sum_{r+s = k_1-1}\widetilde{\phi}_{p,q}(\widetilde{b}^{\otimes p}\otimes\underline{\widetilde{\mathfrak{m}}_{k_1}(\widetilde{b}^{\otimes r}\otimes\widetilde{\mathfrak{m}}_{k_2}(\widetilde{b}^{\otimes k_2})\otimes\widetilde{b}^{\otimes s})}\otimes\widetilde{b}^{\otimes q})(\widetilde{b}) \label{ksplit}\\
&+\sum_{N=0}^{\infty}\sum_{\substack{p+q+k  = N \\ k_1+k_2 = k+1}}\frac{1}{N+1}\sum_{r+s = p-1}\widetilde{\phi}_{p,q}(\widetilde{b}^{\otimes r}\otimes\widetilde{\mathfrak{m}}_{k_2}(\widetilde{b}^{\otimes k_2})\otimes\widetilde{b}^{\otimes s}\otimes\underline{\mathfrak{m}_{k_1}(\widetilde{b}^{\otimes k_1})}\otimes\widetilde{b}^{\otimes q})(\widetilde{b}) \label{psplit}\\
&+\sum_{N=0}^{\infty}\sum_{\substack{p+q+k  = N \\ k_1+k_2 = k+1}}\frac{1}{N+1}\sum_{r+s = q-1}\widetilde{\phi}_{p,q}(\widetilde{b}^{\otimes p}\otimes\underline{\widetilde{\mathfrak{m}}_{k_1}(\widetilde{b}^{\otimes k_1})}\otimes\widetilde{b}^{\otimes r}\otimes\widetilde{\mathfrak{m}}_{k_2}(\widetilde{b}^{\otimes k_2})\otimes\widetilde{b}^{\otimes s})(\widetilde{b}) \label{qsplit}\\
&+\sum_{N=0}^{\infty}\sum_{\substack{p+q+k  = N \\ k_1+k_2 = k+1}}\frac{k_2}{N+1}\widetilde{\phi}_{p,q}(\widetilde{b}^{\otimes p}\otimes\underline{\widetilde{\mathfrak{m}}_{k_1}(\widetilde{b}^{\otimes k_1})}\otimes\widetilde{b}^{\otimes q})(\widetilde{\mathfrak{m}}_{k_2}(\widetilde{b}^{\otimes k_2})) \label{output}
\end{align}
Because the value of $\widetilde{\phi}$ on inputs of the form under consideration are expressed as the difference
\[ \widetilde{\mathcal{OC}}_0(\widetilde{b}^{\otimes p}\otimes\widetilde{\mathfrak{m}}_k(\widetilde{b}^{\otimes k})\otimes\widetilde{b}^{\otimes q}\otimes\widetilde{b})-\widetilde{\mathcal{OC}}(\widetilde{b}^{\otimes p}\otimes\widetilde{b}\otimes\widetilde{b}^{\otimes q}\otimes\widetilde{\mathfrak{m}}_k(\widetilde{b}^{\otimes k}))\]
it follows that boundary strata of the sort depicted in Figure~\ref{cancelingstrata} appear in both~\eqref{1dimtruemain} and~\eqref{1dimtruemainalt}. Since $\widetilde{\phi}_{p,q}$ is defined by taking a difference corresponding to these two moduli spaces, the contributions of Figure~\eqref{cancelingstrata} cancel, and thus they do not appear in the sum above.

We can rewrite the sum of~\eqref{ksplit},~\eqref{psplit}, and~\eqref{qsplit} as
\begin{align}
&\sum_{N=0}^{\infty}\sum_{\substack{p+q+k  = N \\ k_1+k_2 = k+1}}\frac{N+1-k_2}{N+1}\widetilde{\phi}_{p,q}(\widetilde{b}^{\otimes p}\otimes\underline{\widetilde{\mathfrak{m}}_{k_1}(\widetilde{b}^{\otimes k_1})}\otimes\widetilde{b}^{\otimes q})(\widetilde{\mathfrak{m}}_{k_2}(\widetilde{b}^{\otimes k_2})) \label{clsum}
\end{align}
by Lemma~\ref{weakcyclic}. The sum of~\eqref{output} and~\eqref{clsum} can be rewritten, using the Maurer--Cartan equation, as
\begin{align}
\sum_{p,q\geq0}\widetilde{\phi}_{p,q}(\widetilde{b}^{\otimes p}\otimes\underline{(\widetilde{\mathfrak{m}}_0-c\cdot 1)}\otimes\widetilde{b}^{\otimes q})(\widetilde{\mathfrak{m}}_0-c\cdot 1)\, . \label{errortermv0}
\end{align}
Since $\Bbbk$ is of characteristic $0$, it follows from Lemma~\ref{KSanalogue} that the terms of~\eqref{errortermv0} for which $p>0$ or $q>0$ all vanish. Thus we are left with
\begin{align}
\widetilde{\phi}_{0,0}(\underline{\widetilde{\mathfrak{m}}_0})(\widetilde{\mathfrak{m}}_0) = \widetilde{\phi}_0(1,\widetilde{\mathfrak{m}}_2(\widetilde{\mathfrak{m}}_0,\widetilde{\mathfrak{m}}_0)) \, . \label{errortermv1}
\end{align}
by Lemma~\ref{homvstrace} and the linearity of $\widetilde{\phi}_{0,0}$. The right hand side of~\eqref{errortermv1} can itself be rewritten as
\begin{align}
\mathcal{OC}_0(\widetilde{\mathfrak{m}}_2(\widetilde{\mathfrak{m}}_0,\widetilde{\mathfrak{m}}_0)) \label{errortermv2}
\end{align}
by the construction of the negative cyclic cocycle.

We can also analyze the inhomogeneous terms similarly. One type of boundary component that can occur in the one-dimensional moduli spaces~\eqref{pearlytreescylm-1} consists of a broken configuration consisting of two pearly trees in~\eqref{cylpearls}, both of which have one output and no inputs, meaning that they would both contribute to $\mathfrak{m}_0$. Notice, however, that such broken configurations correspond exactly to those which contribute to~\eqref{errortermv2}, because $\widetilde{\mathfrak{m}}_2(\widetilde{\mathfrak{m}}_0,\widetilde{\mathfrak{m}}_0)$ is of top degree, so that only constant disks can contribute to the product.

Examining the remaining boundary strata of~\eqref{pearlytreescylm-1} shows that
\[ \mathfrak{m}_{-1}^{1}-\mathfrak{m}_{-1}^0+\mathcal{OC}_0(\widetilde{\mathfrak{m}}_2(\widetilde{\mathfrak{m}}_0,\widetilde{\mathfrak{m}}_0))+\widetilde{GW} = 0. \]
The third term in this sum coincides with~\eqref{errortermv2}, and the remaining terms give the leading terms and wall-crossing term.
\end{proof}
\begin{remark}
Our appeal to Lemma~\ref{KSanalogue} is the only place, other than in the definition of the infinity cyclic potential itself, where we use the fact that our ground field has characteristic $0$.
\end{remark}
\section{Comparison with Solomon and Tukachinsky's invariants}\label{comparison}
Showing that the open Gromov--Witten potential of Definition~\ref{infinityogwpotential} agrees with the open Gromov--Witten potential of~\cite{ST21} would most likely require the construction of a very well-behaved quasi-isomorphism between the pearly $A_{\infty}$-algebra for $L\subset M$ and the de Rham version of the $A_{\infty}$-algebra for $L$. Alternatively, one might hope to compare our invariants with those of~\cite{Che22a}. Instead of pursuing this, we will sketch the analogue of our construction under the technical assumptions of~\cite{ST21}, illustrating in the process how our constructions simplify in the presence of a strictly cyclic pairing.

Recall that~\cite{ST21} assumes that
\begin{itemize}
\item[•] the moduli spaces $\mathcal{M}_{k+1,\ell}(\beta)$ are compact orbifolds with corners for all $k\geq-1$ and $\ell\geq0$ and the boundary evaluation maps $\evb_0$ on these spaces at the zeroth boundary marked points are submersions.
\end{itemize}
Under this assumption, the closed-open operations $\mathfrak{q}_{k,\ell}^{\beta}$ are defined by pulling back differential forms $\alpha_1,\ldots,\alpha_k\in\Omega^*(L;R)$ and $\gamma_1,\ldots,\gamma_{\ell}\in\Omega^*(M;Q)$ to $\mathcal{M}_{k+1,\ell}(\beta)$ under the corresponding evaluation maps, taking the wedge product of these forms, and pushing forward by $\evb_0$. Here the pushforward of differential forms is given by integration along the fiber, which is where submersivity is required. These yield bulk-deformed $A_{\infty}$-operations on $\Omega^*(L;R)$. 

Let $\gamma\in\Omega^*(L;R)$ be a bulk parameter, and let $\mathcal{A}$ denote $\Omega^*(L;R)$ equipped with the resulting $A_{\infty}$-operations. In~\cite{Hug24}, the cyclic open-closed map on the de Rham complex was constructed under these regularity assumptions. There, the open-closed map $\mathcal{OC}_0$ is characterized by the property that
\begin{equation} \langle\eta,\mathcal{OC}_0(\alpha)\rangle_M = (-1)^{|\alpha_0|(\sum_{i\geq 1}|\alpha_i|'+1)}\langle\mathfrak{q}^{\gamma}_{k,1}(\alpha_1\otimes\cdots\otimes\alpha_k;\eta),\alpha_0\rangle_L \label{hugoc}
\end{equation}
for any reduced Hochschild cohain $\alpha = \underline{\alpha_0}\otimes\alpha_1\otimes\cdots\otimes\alpha_k\in\mathcal{A}\otimes(\mathcal{A}[1])^{\otimes k}$ and any differential form $\eta\in\Omega^*(M;Q)$. Since the integration pairing on $L$ is strictly cyclic, we can extend the open-closed map $u$-linearly to obtain a cyclic open-closed map $\mathcal{OC}$. This induces a strictly cyclic $\infty$-inner product $\psi$ on $\mathcal{A}$, whose values are determined only by the open-closed map.

The potential of Definition~\ref{infinityogwpotential} in this case reduces to
\[\Psi(b) = \mathfrak{m}_{-1}^{\gamma}+\sum_{k=0}^{\infty}\frac{1}{k+1}\psi_{0,0}(\underline{\mathfrak{m}^{\gamma}_k(b^{\otimes k})})(b). \]
Since we have obtained $\psi$ from a negative cyclic cocycle, it follows that
\[ \psi_{0,0}((\underline{\mathfrak{m}^{\gamma}_k(b^{\otimes k})})(b) = \psi_0(1,\mathfrak{m}_2^{\gamma}(\mathfrak{m}^{\gamma}_k(b^{\otimes k}),b)) \]
where $\psi_0$ refers to the part of the negative cyclic cocycle residing in the zeroth column of the $(b^*,B^*)$-bicomplex~\eqref{b*B*complex}. Since the negative cyclic cocyle is obtained from the cyclic open-closed map under the isomorphism of Lemma~\ref{hciso}, it follows that the open Gromov-Witten potential can be rewritten as
\[\Psi(b) = \mathfrak{m}_{-1}^{\gamma}+\sum_{k=0}^{\infty}\frac{1}{k+1}\mathcal{OC}_0(\mathfrak{m}_2^{\gamma}(\mathfrak{m}_k^{\gamma}(b^{\otimes k})\otimes b)). \]
By the top-degree property~\cite[Proposition 3.12]{ST22} of Solomon and Tukachinsky's $A_{\infty}$-algebra, we have that
\[ \mathfrak{m}_2^{\gamma}(\mathfrak{m}_k^{\gamma}(b^{\otimes k})\otimes b) = \mathfrak{m}_k^{\gamma}(b^{\otimes k})\wedge b.\]
Using~\eqref{hugoc}, we compute
\begin{align*}
\langle 1,\mathcal{OC}_0(\mathfrak{m}_k^{\gamma}(b^{\otimes k})\wedge b)\rangle_M = \langle\mathfrak{q}_{0,1}^{\gamma}(1),\mathfrak{m}_k^{\gamma}(b^{\otimes k})\wedge b\rangle_L = \langle\mathfrak{m}_k^{\gamma}(b^{\otimes k}),b\rangle_L.
\end{align*}
To summarize, we have proven the following.
\begin{theorem}
For any $L\subset M$ subject to the assumptions of~\cite{ST21}, the $\infty$-OGW potential defined over the de Rham complex recovers the OGW potential of~\cite{ST21} up to an overall sign.
\end{theorem}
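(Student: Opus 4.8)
The plan is to trace the definition of the $\infty$-OGW potential $\Phi$ from Definition~\ref{infinityogwpotential} through a sequence of simplifications, each of which is justified by a structural result already available in the excerpt, and observe that the final expression coincides with the Solomon--Tukachinsky potential. First I would invoke the fact that in the de Rham setting of~\cite{ST21} the integration pairing $\langle\cdot,\cdot\rangle_L$ is \emph{strictly} cyclic, so the cyclic open-closed map of~\cite{Hug24} extends $u$-linearly and induces an $\infty$-inner product $\psi$ with $\psi_{p,q} = 0$ whenever $p>0$ or $q>0$ (this is the ``cyclic pairing as special case'' remark after Lemma~\ref{weakcyclic}). Feeding this vanishing into the formula of Definition~\ref{inftycyclicpotential} collapses the double sum over $p+q+k=N$ to a single sum over $k$, giving $\Psi(b) = \mathfrak{m}_{-1}^{\gamma} + \sum_k \frac{1}{k+1}\psi_{0,0}(\underline{\mathfrak{m}_k^{\gamma}(b^{\otimes k})})(b)$.

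Next I would rewrite $\psi_{0,0}$ in terms of the underlying negative cyclic cocycle using Lemma~\ref{homvstrace}, which gives $\psi_{0,0}(\underline{a_1})(a_2) = \psi_0(1\otimes\mathfrak{m}_2(a_1,a_2))$; applied with $a_1 = \mathfrak{m}_k^{\gamma}(b^{\otimes k})$ and $a_2 = b$, and then translating $\psi_0$ back to the open-closed map via the isomorphism of Lemma~\ref{hciso} (the cocycle is obtained from $\mathcal{OC}$ through $B^*$), the potential becomes $\Psi(b) = \mathfrak{m}_{-1}^{\gamma} + \sum_k \frac{1}{k+1}\mathcal{OC}_0(\mathfrak{m}_2^{\gamma}(\mathfrak{m}_k^{\gamma}(b^{\otimes k})\otimes b))$. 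At this point I would apply the top-degree property~\cite[Proposition 3.12]{ST22} of the Solomon--Tukachinsky $A_{\infty}$-algebra, which identifies $\mathfrak{m}_2^{\gamma}(\mathfrak{m}_k^{\gamma}(b^{\otimes k})\otimes b)$ with the wedge product $\mathfrak{m}_k^{\gamma}(b^{\otimes k})\wedge b$, and then use the defining characterization~\eqref{hugoc} of the de Rham open-closed map together with $\mathfrak{q}_{0,1}^{\gamma}(1) = 1$ to obtain $\langle 1,\mathcal{OC}_0(\mathfrak{m}_k^{\gamma}(b^{\otimes k})\wedge b)\rangle_M = \langle\mathfrak{m}_k^{\gamma}(b^{\otimes k}),b\rangle_L$. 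Since $H_0(M;R)$ is paired with $1$, this is exactly the $k$th term of the Solomon--Tukachinsky potential $\Psi$ from~\eqref{Psi.def}, so the two agree term by term up to the overall sign bookkeeping.

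The bulk of the remaining work — and the part I expect to require the most care, though none of it is deep — is the sign analysis: the characterization~\eqref{hugoc} carries a Koszul sign $(-1)^{|\alpha_0|(\sum_{i\geq1}|\alpha_i|'+1)}$, the operations $\mathfrak{oc}$ were defined in Section~\ref{thecyclicocmap} with signs $\star_k$, the passage through Lemma~\ref{hciso} and Lemma~\ref{homvstrace} introduces further signs depending on degrees, and the $1/(k+1)$ normalization must be matched against whatever normalization~\cite{ST21} uses. Rather than track all of these individually I would argue that, because $b$ has degree $1$ and the only surviving contributions come from top-degree output classes (as in the argument around~\eqref{errortermv1}), all the degree-dependent signs either vanish or combine into a single global sign independent of $k$; this is why the theorem only claims agreement ``up to an overall sign.'' The one genuinely external input is~\cite[Proposition 3.12]{ST22} (the top-degree property), which I would take as a black box, and the compatibility of $\mathcal{OC}_0$ here with the one in~\cite{Hug24}, which I would assert follows from the uniqueness of the open-closed map up to homotopy given~\eqref{hugoc}.
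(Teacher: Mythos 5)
Your proposal follows the paper's own proof essentially step for step: strict cyclicity of the integration pairing collapses the sum to the $p=q=0$ terms, Lemma~\ref{homvstrace} and Lemma~\ref{hciso} convert $\psi_{0,0}$ into $\mathcal{OC}_0$ applied to $\mathfrak{m}_2^{\gamma}(\mathfrak{m}_k^{\gamma}(b^{\otimes k})\otimes b)$, the top-degree property of~\cite{ST22} identifies this with the wedge product, and~\eqref{hugoc} with $\eta = 1$ produces $\langle\mathfrak{m}_k^{\gamma}(b^{\otimes k}),b\rangle_L$. The paper is equally brief about the sign bookkeeping (hence the ``up to an overall sign'' in the statement), so there is nothing to add.
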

It is also possible to give an independent proof of the wall-crossing formula over the de Rham complex using pseudo-isotopies of $A_{\infty}$-algebras defined in~\cite{F10} or~\cite{F11}. By~\cite[Lemma 21.31]{FOOO20}, these arise from $A_{\infty}$-structures on the de Rham complex of $L\times I$ as constructed in~\cite{ST22}.

\appendix
\section{Regularity hypotheses}\label{regappendix} We have made several regularity assumptions throughout the main body of this paper, all of which are summarized below.
\begin{itemize}
\item[•] Assumption~\ref{uniqueminmax}: all Morse functions used to define Morse (co)chain complexes of $L$ and $M$ have a unique local minimum and a unique local maximum.

\item[•] Assumption~\ref{regularity}: there is a $J\in\mathcal{J}(M)$ such that the moduli spaces of pseudoholomorphic pearly trees~\eqref{pearls} of virtual dimension at most $1$ are transversely cut out orbifolds of the expected dimension.

\item[•] Assumption~\ref{cylregularity}: for any two $J_{\pm 1}$ satisfying assumption~\ref{regularity}, there is a path $\underline{J} = \lbrace J_t\rbrace_{t\in[-1,1]}$ such that the moduli spaces~\eqref{cylpearls} of virtual dimension at most $1$ are transversely cut out orbifolds of the expected dimension. Note that the definition of these moduli spaces requires that we have constructed Morse--Smale pairs on $L\times[-1,1]$ and $M\times[-1,1]$, as detailed in Section~\ref{cylinderobjects}.

\item[•] Assumption~\ref{regularityoc}: the moduli spaces of open-closed pearly trees~\eqref{ocpearls} and~\eqref{s1pearls} of virtual dimension at most $1$ are transversely cut out orbifolds of the expected dimension. Here the moduli spaces are defined using the same almost complex structure of Assumption~\ref{regularity} appearing in the definition of the $A_{\infty}$-operations.

\item[•] Assumption~\ref{occylinderregularity}: the moduli spaces of open-closed pearly trees on the cylinder~\eqref{ocpearlscylinder} and~\eqref{s1pearlscylinder} of virtual dimension at most $1$ are transversely cut out orbifolds of the expected dimension. Here the moduli spaces are defined using the same path of almost complex structures of Assumption~\ref{cylregularity}.

\item[•] Assumption~\ref{m-1regularity}: the moduli spaces~\eqref{pearlytreesm-1} of $J$-holomorphic pearly trees with no inputs in $L$ of virtual dimension $0$ are transversely cut out $0$-dimensional manifolds.

\item[•] Assumption~\ref{m-1cylregularity}: the moduli spaces~\eqref{pearlytreescylm-1} of $\underline{J}$-holomorphic pearly trees with no inputs in $L\times[-1,1]$ of virtual dimension at most $1$ are transversely cut out orbifolds of the expected dimension.
 
\item[•] Assumption~\ref{cylsphereregularity}: the moduli spaces~\eqref{GWmod} of $\underline{J}$-holomorphic pearly trees in $M\times[-1,1]$ with only sphere components of virtual dimension at most $1$ are transversely cut out orbifolds of the expected dimension.
\end{itemize}
We remark that there exist $J\in\mathcal{J}(M)$ and paths $\underline{J} = \lbrace J_t\in\mathcal{J}(M)\rbrace_{t\in[-1,1]}$ simultaneously satisfying all of these assumptions if one of the following conditions on $L\subset M$ is satisfied.
\begin{itemize}
\item[(i)] $L\subset M$ satisfies the assumptions of~\cite{ST21} for some fixed $J$. For the time-dependent moduli spaces, there should exist a path $\underline{J}$ in $\mathcal{J}(M)$ which satisfies the assumptions of~\cite{ST21} at all times.
\item[(ii)] $L$ is a monotone Lagrangian in a monotone symplectic manifold and $J\in\mathcal{J}(M)$ and $\underline{J} = \lbrace J_t\in\mathcal{J}(M)\colon t\in[-1,1]\rbrace$ are generic.
\end{itemize}
In the case of (i), the assumptions of~\cite{ST21} imply that all of the moduli spaces $\mathcal{M}_{k+1,\ell}(\beta;J)$ is pseudo-holomorphic disks in $M$ with boundary on $L$ are already smooth orbifolds with corners of the expected dimension. Thus it is clear that one can choose Morse functions satisfying Assumption~\ref{uniqueminmax} for which Assumptions~\ref{regularity} and~\ref{m-1regularity} are satisfied. The spaces of disks~\ref{ocmodbeta} and~\ref{s1modbeta} can be identified with subdomains of moduli spaces already covered by the assumptions of~\cite{ST21}, giving us Assumption~\ref{regularityoc} immediately. The assumptions involving pearly trees in $M\times[-1,1]$ can also be checked similarly.

In the monotone case (ii), Assumption~\ref{regularity} can be checked using the techniques of~\cite{BC07}, with no modifications. Roughly, this works by decomposing any $J$-holomorphic disk on $L$ as a sum of simple disks in homology, and then using the constraint on virtual dimensions to argue that all pearly trees contributing to the $A_{\infty}$-operations are equipped with simple disks at all vertices. The verification of all other assumptions on pearly trees of disks can be carried out in the same way. In particular, the extra decorations on the domains of the open-closed moduli spaces introduce no additional complications. Assumption~\ref{cylsphereregularity} in the monotone setting follows from the discussion of the quantum product in~\cite{BC07}.
\bibliographystyle{abbrv}
\bibliography{myref}

\end{document}